\title{Small dilatation pseudo-Anosovs and $3$--manifolds}
\author{Benson Farb, Christopher J. Leininger, and Dan Margalit
\thanks{The authors gratefully acknowledge support from the National
Science Foundation.}}
\theoremstyle{plain}
\newtheorem{theorem}{Theorem}[section]
\newtheorem{proposition}[theorem]{Proposition}
\newtheorem{lemma}[theorem]{Lemma}
\newtheorem{corollary}[theorem]{Corollary}
\newtheorem{question}[theorem]{Question}
\newtheorem*{theorem:pc}{Theorem \ref{theorem:principal congruence}}
\newtheorem*{theorem:jk}{Theorem \ref{theorem:johnson}}
\newtheorem*{theorem:br}{Theorem \ref{theorem:brunnian}}
\newcommand{\nc}{\newcommand}
\nc{\dmo}{\DeclareMathOperator}
\dmo{\SL}{SL} \nc{\C}{\mathcal{C}} \nc{\I}{\mathcal{I}} \nc{\K}{\mathcal{K}} \nc{\N}{\mathcal{N}} \nc{\R}{\mathbb{R}}
\nc{\Z}{\mathbb{Z}} \dmo{\PSL}{PSL} \dmo{\Teich}{Teich} \dmo{\spec}{spec} \nc{\gin}{i} \nc{\ga}{\Gamma} \dmo{\Out}{Out}
\dmo{\brun}{Brun} \dmo{\codeg}{codeg} \dmo{\intr}{int} \dmo{\vol}{vol}
\dmo\Sp{Sp}
\dmo\Mod{Mod}
\dmo\PMod{PMod}
\dmo\genus{genus}
\dmo\htop{h_{top}}
\dmo\CAT{CAT}
\dmo\M{{\cal M}}
\dmo\T{{\cal T}}
\dmo\sys{sys}
\dmo\out{out}
\nc{\margin}[1]{\marginpar{\tiny #1}}
\begin{document}
\maketitle
\vspace{-.25in}
\begin{abstract}
The main result of this paper is a universal finiteness theorem for the set of all small dilatation pseudo-Anosov homeomorphisms $\phi:S \to S$, ranging over all surfaces $S$.  More precisely, we consider pseudo-Anosovs $\phi:S \to S$ with $|\chi(S)|\log(\lambda(\phi))$ bounded above by some constant, and we prove that, after
puncturing the surfaces at the singular points of the stable foliations, the resulting set of mapping tori is finite.
Said differently, there is a finite set of fibered hyperbolic 3--manifolds so that all small dilatation
pseudo-Anosovs occur as the monodromy of a Dehn filling on one of the 3--manifolds in the finite list,
where the filling is on the boundary slope of a fiber.
\end{abstract}

\maketitle


\section{Introduction}

Given a pseudo-Anosov homeomorphism $\phi$ of a surface $S$, let $\lambda(\phi)$ denote its dilatation.   For
any $P \geq 1$, we define
\[ \Psi_P = \left\{ \phi:S \to S \, \big{|} \, \chi(S)<0, \, \phi \mbox{ pseudo-Anosov, and } \lambda(\phi) \leq P^{\frac{1}{|\chi(S)|}} \right\}. \]
It follows from work of Penner \cite{Pe} that for $P$ sufficiently large, and for each closed surface $S_g$ of genus $g \geq 2$, there exists $\phi_g:S_g \to S_g$ so that
\[ \{ \phi_g : S_g \to S_g \}_{g=2}^\infty \subset \Psi_P. \]
We refer to $\Psi_P$ as the set of \textit{small dilatation pseudo-Anosov homeomorphisms}.

Given a pseudo-Anosov homeomorphism $\phi:S \to S$, let $S^\circ_\phi = S^\circ \subset S$ be the surface obtained by
removing the singularities of the stable and unstable foliations for $\phi$ and let $\phi|_{S^\circ}:S^\circ \to S^\circ$ denote the
restriction. The set of pseudo-Anosov homeomorphisms
\[ \Psi_P^\circ = \left\{ \phi|_{S^\circ}:S^\circ \to S^\circ \, |\, (\phi:S \to S) \in \Psi_P \right\} \]
is therefore also infinite.

The main discovery contained in this paper is a universal finiteness phenomenon for all small dilatation pseudo-Anosov
homeomorphisms: they are, in an explicit sense described below, all ``generated'' by a finite number of examples.  To
give a first statement, let $\T(\Psi_P^\circ)$ denote the homeomorphism classes of mapping tori of elements of $\Psi_P^\circ$.

\begin{theorem}
\label{theorem:main1} The set $\T(\Psi_P^\circ)$ is finite.
\end{theorem}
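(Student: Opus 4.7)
My plan is to combine a universal volume bound for the mapping tori $M_{\phi|_{S^\circ}}$ with the J\o rgensen--Thurston compactness theorem, together with a fibered-structure argument to rule out the appearance of infinitely many distinct Dehn fillings.

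First, for each $\phi \in \Psi_P$ the restriction $\phi|_{S^\circ}$ is a pseudo-Anosov on the punctured surface $S^\circ$, whose invariant foliations have singularities only at the punctures. By Thurston's geometrization of fibered $3$--manifolds, $M_{\phi|_{S^\circ}}$ is therefore a finite-volume cusped hyperbolic $3$--manifold with one cusp per $\phi$-orbit of singular points; the total number of such orbits is linearly bounded in $|\chi(S)|$ by the Euler--Poincar\'e relation for prong singularities.

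Next, the quantitative heart of the argument is a uniform upper bound
\[
\vol\bigl(M_{\phi|_{S^\circ}}\bigr) \;\leq\; C\,|\chi(S)|\log\lambda(\phi) \;\leq\; C\log P,
\]
which I would derive from Brock's theorem comparing the hyperbolic volume of a fibered $3$--manifold to the Weil--Petersson translation length of its monodromy on Teichm\"uller space, combined with standard comparisons between the Weil--Petersson and Teichm\"uller metrics (using that the Teichm\"uller translation length of $\phi$ equals $\log\lambda(\phi)$). Some care is needed to handle the cusped case and to pass from $M_\phi$ to the drilled manifold $M_{\phi|_{S^\circ}}$, but since the number of singular orbits being drilled is itself linear in $|\chi(S)|$, these adjustments should only affect the constant $C$.

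Given this volume bound, the J\o rgensen--Thurston theorem furnishes a finite list of cusped hyperbolic $3$--manifolds $N_1,\ldots,N_k$ such that every cusped hyperbolic $3$--manifold of volume at most $C\log P$ arises from some $N_i$ by Dehn filling on a subset of its cusps. The main obstacle---the step I expect to be hardest---is to show that only finitely many such Dehn fillings actually produce elements of $\T(\Psi_P^\circ)$. My strategy here would be to argue that any $M_{\phi|_{S^\circ}} \in \T(\Psi_P^\circ)$ appearing as a filling of $N_i$ must be a filling along \emph{fiber slopes} on cusps of $N_i$ associated to one of the (finitely many) fibered faces of the Thurston norm unit ball of $N_i$: the fibration of $M_{\phi|_{S^\circ}}$ (with fiber $S^\circ$) must extend to a fibration of $N_i$ by Fried's theorem on fibered faces, and the cusps being filled must be boundary components of the extended fiber. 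Since each $N_i$ has finitely many cusps and finitely many fibered faces, only finitely many fillings per $N_i$ occur, giving finiteness of $\T(\Psi_P^\circ)$.
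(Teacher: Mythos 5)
Your argument has a fatal gap at its quantitative heart: the claimed volume bound $\vol(M_{\phi|_{S^\circ}}) \leq C|\chi(S)|\log\lambda(\phi)$ with a \emph{universal} constant $C$ cannot be obtained from Brock's theorem. The paper addresses exactly this in the introduction (just before Corollary~\ref{C:magic?}): Brock's bilipschitz constant $c=c(S)$ in $\vol(M_\phi)\leq c\,\tau_{WP}(\phi)$ depends on the surface, and one has $c(S)\geq |\chi(S)|$ for $|\chi(S)|$ large. Combined with the comparison $\tau_{WP}(\phi)\leq\sqrt{2\pi|\chi(S)|}\,\log\lambda(\phi)$, the best you get is $\vol(M_\phi)\lesssim |\chi(S)|^{3/2}\log\lambda(\phi)$, which is not $O(|\chi(S)|\log\lambda(\phi))$, so the right-hand side is \emph{not} controlled by $\log P$ alone. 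Drilling to $M_{\phi|_{S^\circ}}$ makes this worse, not better, since volume only increases under drilling. In the paper, the volume bound (Corollary~\ref{C:bounded volume}) is a \emph{consequence} of Theorem~\ref{theorem:main1}, not a route to it.

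Even granting a uniform volume bound, the step that only finitely many fillings of each $N_i$ land in $\T(\Psi_P^\circ)$ is not established. J{\o}rgensen--Thurston gives finitely many parent manifolds $N_i$, but a single $N_i$ admits infinitely many Dehn fillings of volume below any threshold strictly larger than $\vol(N_i)$. Your attempted remedy---that the fibration of $M^\circ=M_{\phi|_{S^\circ}}$ must ``extend'' to $N_i$ so that the filling slopes are fiber boundary slopes---is not justified (there is no reason the fibration of a Dehn filling should restrict from or extend to a fibration of the parent), and even if it did, a single fibered face of $N_i$ contains infinitely many fiber classes with varying boundary slopes on the cusps, so one still does not get finiteness this way. (The paper's Corollary~\ref{corollary:maincor1} does describe all of $\Psi_P$ via fillings of finitely many manifolds along fiber boundary slopes, but this is deduced from Theorem~\ref{theorem:main1}, with the filling relating $M^\circ$ to $M_\phi$, which is the opposite direction from what you need.) The paper's actual proof is entirely combinatorial: it takes a small Markov partition for $\phi$, uses the Ham--Song estimate (Lemma~\ref{L:matrix lemma}) to show that all but a bounded number of rectangles are unmixed, suspends the resulting ``prisms'' in $M_\phi$ and collapses them, and shows the quotient is a 3--complex of uniformly bounded complexity (Propositions~\ref{P:hat W bounded}, \ref{P:bounded complexity finite}) whose complement of a 1--subcomplex is homeomorphic to $M^\circ$ (Theorem~\ref{T:quotient htpc homeo}).
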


We will begin by putting Theorem \ref{theorem:main1} in context, and by explaining some of its restatements and corollaries.

\subsection{Dynamics and geometry of pseudo-Anosov homeomorphisms}

For a pseudo-Anosov homeomorphism $\phi:S \to S$,  the number $\lambda(\phi)$ gives a quantitative measure of several different
dynamical properties of $\phi$.  For example, given any fixed metric on the surface,  $\lambda(\phi)$ gives the growth rate of lengths of a
geodesic under iteration by $\phi$ \cite[Expos\'e 9, Proposition 19]{FLP}.  The number $\log(\lambda(\phi))$ gives the minimal topological
entropy of any homeomorphism in the isotopy class of $\phi$ \cite[Expos\'e 10, \S IV]{FLP}. Moreover, a pseudo-Anosov homeomorphism is
essentially the unique minimizer in its isotopy class---it is unique up to conjugacy by a
homeomorphism isotopic to the identity \cite[Expos\'e 12, Th\'eor\`eme III]{FLP}.

From a more global perspective, recall that the set of isotopy classes of orientation preserving homeomorphisms $\phi:S
\to S$ forms a group called the mapping class group, denoted $\Mod(S)$.  This group acts properly
discontinuously by isometries on the Teichm\"uller space $\Teich(S)$ with quotient the moduli space $\M(S)$ of Riemann
surfaces homeomorphic to $S$.  The closed geodesics in the orbifold $\M(S)$ correspond precisely to the conjugacy
classes of mapping classes represented by pseudo-Anosov homeomorphisms, and moreover, the length of a geodesic
associated to a pseudo-Anosov $\phi:S \to S$ is $\log(\lambda(\phi))$.  Thus, the length spectrum of $\M(S)$ is the set
\[ \spec(\Mod(S))=\{\log(\lambda(\phi)): \phi:S \to S \mbox{ is pseudo-Anosov}\} \subset (0,\infty).\]
Arnoux--Yoccoz \cite{AY} and Ivanov \cite{Iv} proved that $\spec(\Mod(S))$ is a closed discrete subset of $\R$. It
follows that $\spec(\Mod(S))$ has, for each $S$, a least element, which we shall denote by $L(S)$.  We can think of
$L(S)$ as the {\em systole} of $\M(S)$.

\subsection{Small dilatations}

Penner proved that there exists constants $0 < c_0 < c_1$ so that for all closed surfaces $S$ with $\chi(S) < 0$, one has
\begin{equation}
\label{eq:bounding dil} c_0 \leq L(S)   |\chi(S)|\leq c_1.
\end{equation}
The proof of the lower bound comes from a spectral estimate for Perron--Frobenius matrices, with $c_0 > \log(2)/6$ (see \cite{Pe} and \cite{Mc2}).   As such, this lower bound is valid for all surfaces $S$ with $\chi(S) < 0$,
including punctured surfaces.  The upper bound is proven by constructing pseudo-Anosov homeomorphisms $\phi_g:S_g \to
S_g$ on each closed surface of genus $g \geq 2$ so that $\lambda(\phi_g) \leq e^{c_1/(2g-2)}$; see also \cite{Ba}.

The best known upper bound for $\{L(S_g) |\chi(S_g)|\}$ is due to Hironaka--Kin \cite{HK} and independently Minakawa
\cite{Mk}, and is $2 \log(2 + \sqrt{3})$.  The situation for punctured surfaces is more mysterious.  There
is a constant $c_1$ so that the upper bound of \eqref{eq:bounding dil} holds for punctured spheres and punctured tori;
see \cite{HK,Ve,Ts}.  However, Tsai has shown that for a surface $S_{g,p}$ of fixed genus $g \geq 2$ and variable number
of punctures $p$, there is no upper bound $c_1$ for $L(S_{g,p}) |\chi(S_{g,p})|$, and in fact, this number grows like
$\log(p)$ as $p$ tends to infinity; see \cite{Ts}.

One construction for small dilatation pseudo-Anosov homeomorphisms is due to McMullen \cite{Mc2}.  The construction,
described in the next section, uses $3$--manifolds and is the motivation for our results.

\subsection{3--manifolds}

Given $\phi:S \to S$, the mapping torus $M = M_\phi$ is the total space of a fiber bundle $f:M \to S^1$, so that the
fiber is the surface $S$.   If $H^1(M;\R)$ has dimension at least $2$, then one can perturb the fibration to another
fibration $f':M \to S^1$ whose dual cohomology class is projectively close to the dual of $f:M \to S^1$; see \cite{Ti}.
In fact, work of Thurston \cite{Th2} implies that there is an open cone in $H^1(M;\R)$ with the property that every
integral class in this cone is dual to a fiber in a fibration.  Moreover, the absolute value of the euler
characteristic of these fibers extends to a linear function on this cone which we denote $|\chi(\cdot)|$.  

Fried \cite{Fr} proved that the dilatation of the monodromy extends to a continuous
function $\lambda(\cdot)$ on this cone, such that the reciprocal of its logarithm is homogeneous.  Therefore, the product $|\chi(\cdot)| \log(\lambda(\cdot))$ depends only on the projective class and varies continuously. McMullen \cite{Mc2} refined the analysis of the extension $\lambda(\cdot)$, producing a polynomial
invariant whose roots determine $\lambda(\cdot)$, and which provides a much richer structure.

In the course of his analysis, McMullen observed that for a fixed $3$--manifold $M = M_\phi$, if $f_k:M \to S^1$ is a
sequence of distinct fibrations of $M$ with fiber $S(k)$ and monodromy $\phi_k:S(k) \to S(k)$, for which the dual cohomology
classes converge projectively to the dual of the original fibration $f:M \to S^1$, then $|\chi(S(k))| \to \infty$, and
\[ |\chi(S(k))| \log(\lambda(\phi_k)) \to |\chi(S)| \log(\lambda(\phi)). \]
In particular, $|\chi(S(k))| \log(\lambda(\phi_k))$ is uniformly bounded, independently of $k$, and therefore
$\{\phi_k:S(k) \to S(k)\} \subset \Psi_P$ for some $P$.

There is a mild generalization of this construction obtained by replacing $\phi:S \to S$ with
$\phi|_{S^\circ}:S^\circ \to S^\circ$ (as defined above) and considering the resulting mapping torus $M^\circ = M_{\phi|_{S^\circ}}$.  McMullen's construction produces pseudo-Anosov homeomorphisms of punctured surfaces from the monodromies of various fibrations of $M^\circ \to S^1$.  By filling in any invariant set of punctures (for example, all of them), we obtain homeomorphisms that may or may not be pseudo-Anosov (after filling in the punctures, there may be a $1$--pronged singularity in the stable and unstable foliation, and this is not allowed for a pseudo-Anosov).  In any
case, we can say that all of the pseudo-Anosov homeomorphisms obtained in this way are \textit{generated by $\phi:S \to
S$}. Theorem \ref{theorem:main1} can be restated as follows.

\begin{corollary}
\label{corollary:fg}
Given $P > 1$, there exists a finite set of pseudo-Anosov homeomorphisms that generate all pseudo-Anosov
homeomorphisms $\phi \in \Psi_P$ in the sense above.
\end{corollary}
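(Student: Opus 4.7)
The plan is to deduce the corollary directly from Theorem \ref{theorem:main1}: since the set of punctured mapping tori $\T(\Psi_P^\circ)$ is finite, one needs only to select one pseudo-Anosov representative per homeomorphism class, and every element of $\Psi_P$ will automatically be generated by one of the chosen representatives.

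First, I would apply Theorem \ref{theorem:main1} to enumerate the finitely many homeomorphism classes $N_1,\ldots,N_k$ appearing in $\T(\Psi_P^\circ)$. For each index $i$, by definition of $\T(\Psi_P^\circ)$ there exists some $\phi \in \Psi_P$ with $M_{\phi|_{S^\circ}}$ homeomorphic to $N_i$; fix one such choice and call the resulting pseudo-Anosov $\phi_i:S_i \to S_i$. This yields a finite candidate set $\mathcal{F}=\{\phi_1,\ldots,\phi_k\}$.

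Second, I would verify that every $\phi:S \to S$ in $\Psi_P$ is generated by some $\phi_i \in \mathcal{F}$ in the sense of the paragraph preceding the corollary. The punctured mapping torus $M_{\phi|_{S^\circ}}$ lies in $\T(\Psi_P^\circ)$ and so is homeomorphic to some $N_i \cong M_{\phi_i|_{S_i^\circ}}$. Pulling back the fibration $M_{\phi|_{S^\circ}} \to S^1$ through this homeomorphism produces a new fibration of $M_{\phi_i|_{S_i^\circ}}$ whose monodromy is conjugate to $\phi|_{S^\circ}$; filling in \emph{all} the punctures then recovers the original pseudo-Anosov $\phi:S \to S$. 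This is exactly what it means for $\phi$ to be generated by $\phi_i$.

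The entire substantive content of the corollary is carried by Theorem \ref{theorem:main1}; the remainder is bookkeeping. The only point I would double-check is that the ``generated by'' definition permits filling in an invariant set of punctures that happens to consist of every puncture, and that the resulting map need only happen to be pseudo-Anosov, which is automatic here because that map is the given $\phi$. Consequently, the main obstacle is not in this deduction at all but in the proof of Theorem \ref{theorem:main1}, which will require controlling the topology of $M_{\phi|_{S^\circ}}$ uniformly in $\phi \in \Psi_P$, presumably via the Fried--McMullen continuous extension of $|\chi|\log\lambda$ on fibered faces of the Thurston norm ball together with a geometric finiteness argument for the underlying hyperbolic 3--manifolds.
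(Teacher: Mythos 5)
Your deduction is correct and matches the paper's own treatment: the paper presents Corollary~\ref{corollary:fg} as a direct restatement of Theorem~\ref{theorem:main1}, and your argument---pick one representative $\phi_i$ per homeomorphism class in $\T(\Psi_P^\circ)$, then for any $\phi \in \Psi_P$ pull back the fibration of $M_{\phi|_{S^\circ}}$ through the homeomorphism to $M_{\phi_i|_{S_i^\circ}}$ and fill all punctures---is exactly the unwinding of that restatement. (Your closing speculation about how Theorem~\ref{theorem:main1} is proved is off the mark---the paper's argument is combinatorial, via Markov partitions and a collapsing of cell structures, not via the Thurston norm and geometric finiteness---but that is immaterial to the correctness of this corollary's deduction.)
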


Corollary \ref{corollary:fg} can be viewed as a kind of converse of McMullen's construction.  In view of Corollary \ref{corollary:fg}, we have the following natural question.

\begin{question}
For any given $P$, can one explicitly find a finite set of pseudo-Anosov homeomorphisms that generate $\Psi_P$?
\end{question}

\subsection{Dehn filling}

Puncturing the surface at the singularities is necessary for the validity of Theorem \ref{theorem:main1}.  Indeed, the
set of all pseudo-Anosov homeomorphisms that occur as the monodromy for a fibration of a fixed $3$--manifold have a
uniform upper bound for the number of prongs at any singularity of the stable foliation.  On the other hand, Penner's
original construction produces a sequence of pseudo-Anosov homeomorphisms $\phi_g:S_g \to S_g$ in which the number of
prongs at a singularity tends to infinity with $g$.  So, the set $\T(\Psi_P)$ of homeomorphism classes of mapping tori of
elements of $\Psi_P$ is an infinite set for $P$ sufficiently large.

Removing the singularities from the stable and unstable foliations of $\phi$, then taking the mapping torus, is the
same as \textit{drilling out} the closed trajectories through the singular points of the suspension flow in $M_\phi$.
Thus, we can reconstruct $M = M_\phi$ from $M^\circ = M_{\phi|_{S^\circ}}$ by \textit{Dehn filling}; see \cite{Th1}.
The Dehn-filled solid tori in $M$ are regular neighborhoods of the closed trajectories through the singular points and
the \textit{Dehn filling slopes} are the (minimal transverse) intersections of $S$ with the boundaries of these
neighborhoods.  Back in $M^\circ$, the boundaries of the neighborhoods are tori that bound product neighborhoods of
the ends of $M^\circ$ homeomorphic to $[0,\infty) \times T^2$.  Here the filling slopes are described as the intersections of
$S^\circ$ with these tori and are called the \textit{boundary slopes of the fiber $S^\circ$}.

Moreover, since all of the manifolds in $\T(\Psi_P^\circ)$ are mapping tori for pseudo-Anosov homeomorphisms, they all admit
a complete finite volume hyperbolic metric by Thurston's Geometrization Theorem for Fibered $3$--manifolds (see, e.g.\
\cite{Mc1}, \cite{Ot}). As another corollary of Theorem \ref{theorem:main1}, we obtain the following.

\begin{corollary}
\label{corollary:maincor1} For each $P>1$ there exists $r\geq 1$ with the following property.  There are finitely many
complete, noncompact, hyperbolic $3$--manifolds $M_1, \ldots, M_r$ fibering over $S^1$, with the property that any $\phi
\in \Psi_P$ occurs as the monodromy of some bundle obtained by Dehn filling one of the $M_i$ along boundary slopes of a fiber.
\end{corollary}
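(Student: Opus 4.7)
The strategy is to deduce Corollary \ref{corollary:maincor1} directly from Theorem \ref{theorem:main1} by choosing representatives of the finite set $\T(\Psi_P^\circ)$ and invoking Thurston's geometrization theorem together with the drilling/filling correspondence reviewed in the introduction.

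First, by Theorem \ref{theorem:main1} the set $\T(\Psi_P^\circ)$ is finite; let $M_1,\ldots,M_r$ be one representative of each homeomorphism class. By construction each $M_i$ is the mapping torus of some $\phi_i|_{S_i^\circ}\in\Psi_P^\circ$. Every pseudo-Anosov on a closed surface of genus at least two has a singular point in its stable foliation, and for a pseudo-Anosov on an already punctured surface the fiber $S_i^\circ$ has a puncture as well; in either case $S_i^\circ$ is noncompact, and hence so is $M_i$. Being the mapping torus of a pseudo-Anosov homeomorphism, $M_i$ carries a complete finite-volume hyperbolic metric by Thurston's geometrization theorem for fibered $3$-manifolds, as already cited in the paper.

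Next, let $\phi\in\Psi_P$ be arbitrary, with mapping torus $M_\phi$ and drilled mapping torus $M_\phi^\circ=M_{\phi|_{S^\circ}}$. By definition $M_\phi^\circ\in\T(\Psi_P^\circ)$, so there is a homeomorphism $h\colon M_i\to M_\phi^\circ$ for some $i$. As explained in the Dehn filling discussion above, $M_\phi$ is obtained from $M_\phi^\circ$ by Dehn filling the torus ends (the product neighborhoods of the drilled singular trajectories) along the boundary slopes of the fiber $S^\circ$. Transporting these slopes back to $M_i$ via $h$ produces a Dehn filling of $M_i$ along boundary slopes of a fiber, whose result is homeomorphic to $M_\phi$ as a fibered $3$-manifold; in particular the monodromy of the resulting fibration is $\phi$, up to conjugacy by a homeomorphism isotopic to the identity. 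Taking $r$ to be the cardinality of $\T(\Psi_P^\circ)$ completes the argument.

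The only genuine input is Theorem \ref{theorem:main1}, which supplies the finite list; the remainder of the corollary is a translation of this finiteness into the language of hyperbolic $3$-manifolds and Dehn filling, using standard facts already set up in the paper. Accordingly I anticipate no new obstacle at this step: the entire mathematical content of the corollary is packaged inside Theorem \ref{theorem:main1}, and verifying that the recovered fibration indeed has monodromy $\phi$ amounts to the tautology that drilling and filling along the same slopes are inverse operations on fibered manifolds.
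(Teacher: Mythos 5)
Your proof is correct and takes essentially the same route as the paper: the paper does not give a separate formal proof of Corollary \ref{corollary:maincor1} but instead lets it follow from Theorem \ref{theorem:main1} together with the immediately preceding discussion of drilling/filling and Thurston's geometrization for fibered $3$--manifolds, which is exactly what you spell out. Your extra care in checking noncompactness of the $M_i$ (at least one singular or marked point always exists, since $\chi(S)<0$ forces singularities on closed surfaces and punctured surfaces already have punctures) is a reasonable detail that the paper leaves implicit.
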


\subsection{Volumes}

Because hyperbolic volume decreases after Dehn filling \cite{NZ,Th1}, as a corollary of Corollary
\ref{corollary:maincor1}, we have the following.

\begin{corollary} \label{C:bounded volume}
The set of hyperbolic volumes of $3$--manifolds in $\T(\Psi_P)$ is bounded by a constant depending only on $P$.
\end{corollary}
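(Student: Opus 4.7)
The plan is to combine Corollary \ref{corollary:maincor1} with the classical theorem of Thurston \cite{Th1}, refined by Neumann--Zagier \cite{NZ}, that hyperbolic volume is monotonically non-increasing under Dehn filling: if a cusped, finite-volume hyperbolic $3$-manifold $N^\circ$ is Dehn filled to produce a hyperbolic $3$-manifold $N$, then $\vol(N) \leq \vol(N^\circ)$.

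Concretely, I would first invoke Corollary \ref{corollary:maincor1} to obtain the finite list of noncompact, finite-volume hyperbolic $3$-manifolds $M_1,\ldots,M_r$, and set $C(P) = \max_{1 \leq i \leq r} \vol(M_i)$. Then, given any $M \in \T(\Psi_P)$, I would write $M = M_\phi$ for some pseudo-Anosov $\phi \in \Psi_P$. Since $\phi$ is pseudo-Anosov, Thurston's geometrization theorem for fibered $3$-manifolds (cited as \cite{Mc1,Ot} in the paper) ensures that $M_\phi$ carries a complete, finite-volume hyperbolic structure. Corollary \ref{corollary:maincor1} then supplies an index $i$ such that $M_\phi$ is obtained from $M_i$ by Dehn filling along boundary slopes of the fiber, and the volume comparison above yields
\[ \vol(M) \;=\; \vol(M_\phi) \;\leq\; \vol(M_i) \;\leq\; C(P), \]
which is the desired bound.

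There is essentially no substantive obstacle here: the entire content has already been packaged into Corollary \ref{corollary:maincor1}, and the argument is just the concatenation of that statement with the volume-monotonicity theorem. The one point worth flagging is that monotonicity of hyperbolic volume under Dehn filling requires the filled manifold to admit a hyperbolic structure, so that the two volumes being compared are both finite; this hypothesis is automatic in our setting because the resulting monodromy $\phi$ is pseudo-Anosov, so nothing further needs to be verified.
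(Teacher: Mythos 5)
Your argument is exactly the paper's: the paper derives this corollary directly from Corollary~\ref{corollary:maincor1} together with the Thurston/Neumann--Zagier fact that hyperbolic volume strictly decreases under Dehn filling, taking the constant to be the maximum of the finitely many volumes $\vol(M_i)$. Nothing is missing; the extra remark about both volumes being finite (since $M_\phi$ is hyperbolic by geometrization for fibered $3$--manifolds) is a reasonable point to flag and consistent with the paper's setup.
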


It follows that we can define a function $\mathcal V:\R\to\R$ by the formula:
\[ \mathcal V(\log P) = \sup \{\vol(M_\phi) \, | \, \phi \in \T(\Psi_P)\}.\]

Given a pseudo-Anosov $\phi:S \to S$, we have $\phi \in \Psi_P$ for $\log P = 
|\chi(S)|\log(\lambda(\phi))$, 
and so as a consequence of Corollary 1.5 we have the following.

\begin{corollary} For any pseudo-Anosov $\phi:S \to S$, we have
\[ \vol(M_\phi) \leq \mathcal V(|\chi(S)|\log(\lambda(\phi))). \]
\end{corollary}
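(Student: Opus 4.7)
The plan is to unwind the definitions and reduce directly to Corollary~1.5 (the corollary giving the existence of the bounding function $\mathcal V$). The content of the statement is really just that, for an arbitrary pseudo-Anosov $\phi:S\to S$, one can always find a $P$ for which $\phi$ lies in $\Psi_P$ with equality in the defining inequality.

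Concretely, given $\phi:S\to S$, I would set
\[
P=\lambda(\phi)^{|\chi(S)|},
\]
so that $\log P=|\chi(S)|\log(\lambda(\phi))$ and $\lambda(\phi)=P^{1/|\chi(S)|}$. From the definition of $\Psi_P$ it then follows immediately that $\phi\in\Psi_P$ (since the defining inequality holds with equality), hence $M_\phi\in\T(\Psi_P)$.

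Applying the definition
\[
\mathcal V(\log P)=\sup\{\vol(M_\phi)\mid \phi\in\T(\Psi_P)\}
\]
from the paragraph preceding the statement, together with the fact that $M_\phi\in\T(\Psi_P)$ for our choice of $P$, yields
\[
\vol(M_\phi)\le \mathcal V(\log P)=\mathcal V\bigl(|\chi(S)|\log(\lambda(\phi))\bigr),
\]
which is the desired bound. There is no real obstacle here; the only thing to check is that $\mathcal V$ is well defined at the value $\log P=|\chi(S)|\log(\lambda(\phi))$, but this is guaranteed by Corollary~1.5, which asserts finiteness of hyperbolic volumes over $\T(\Psi_P)$ for every $P>1$, so that the supremum defining $\mathcal V(\log P)$ is finite for all $P>1$.
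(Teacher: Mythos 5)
Your proof is correct and takes essentially the same approach as the paper: the authors also set $\log P = |\chi(S)|\log(\lambda(\phi))$, observe $\phi \in \Psi_P$ with equality in the defining inequality, and then apply Corollary~1.5 and the definition of $\mathcal V$. The only nitpick is that Corollary~1.5 asserts \emph{boundedness} (not finiteness) of the volume set, which is what makes the supremum finite.
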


Using the relationship between hyperbolic volume and simplicial volume \cite{Th2}, a careful analysis of our proof of Theorem~\ref{theorem:main1} shows that $\mathcal V$ is bounded above by an exponential function.

For a homeomorphism $\phi$ of $S$, let $\tau_{WP}(\phi)$ denote the 
translation length of $\phi$, thought of as an
isometry of $\Teich(S)$ with the Weil--Petersson metric. Brock \cite{Br} 
has proven that the volume of $M_\phi$ and
$\tau_{WP}(\phi)$ satisfy a bilipschitz relation (see also Agol \cite{Ag}), and in particular
\[\vol(M_\phi) \leq c \tau_{WP}(\phi).\]
Moreover, there is a relation between the Weil--Petersson
translation length and the Teichm\"uller translation length  $\tau_{\Teich}(\phi) = \log(\lambda(\phi))$ (see
\cite{Li}), which implies
\[ \tau_{WP}(\phi) \leq \sqrt{2\pi |\chi(S)|} \log(\lambda(\phi)). \]
However, Brock's constant $c=c(S)$ depends on the surface $S$, and moreover $c(S) \geq |\chi(S)|$ when $|\chi(S)|$ is sufficiently large.
In particular, Corollary \ref{C:bounded volume} does not follow from these estimates.  See also \cite{KKT} for a discussion
relating volume to dilatation for a fixed surface.

\subsection{Minimizers}

Very little is known about the actual values of $L(S)$.   It is easy to prove $L(S_1)=\log(\frac{3+\sqrt{5}}{2})$. The
number $L(S)$ is also known when $|\chi(S)|$ is relatively small; see \cite{CH,HS,HK,So,SKL,Zh,LT}. However, the exact
value is not known for any surface of genus greater than $2$; see \cite{LT} for some partial results for surfaces of
genus less than $9$.  In spite of the fact that elements in the set $\{L(S)\}$ seem very difficult to determine, the
following corollary shows that infinitely many of these numbers are generated by a single example.

\begin{corollary} \label{C:magic?}
There exists a complete, noncompact, finite volume, hyperbolic $3$--manifold $M$ with the following property: there
exist Dehn fillings of $M$ giving an infinite sequence of fiberings over $S^1$, with closed fibers $S_{g_i}$ of genus
$g_i\geq 2$ with $g_i\to\infty$, and with monodromy $\phi_i$,  so that
$$L(S_{g_i})=\log (\lambda(\phi_i)).$$
\end{corollary}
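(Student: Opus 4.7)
The plan is to combine Theorem \ref{theorem:main1} with the Hironaka--Kin--Minakawa upper bound on closed-surface systoles and a pigeonhole argument. For each $g \geq 2$, the spectrum $\spec(\Mod(S_g))$ is closed and discrete, so $L(S_g)$ is realized by some pseudo-Anosov $\phi_g : S_g \to S_g$. By the bound $L(S_g)|\chi(S_g)| \leq 2\log(2+\sqrt{3})$ quoted above, setting $P = (2+\sqrt{3})^2$ places the entire sequence $\{\phi_g\}_{g \geq 2}$ inside $\Psi_P$. Moreover, since $\chi(S_g) < 0$ for $g \geq 2$, every pseudo-Anosov on $S_g$ must have at least one singularity (by Euler--Poincar\'e), so $S_g^\circ \subsetneq S_g$ and each mapping torus $M_g^\circ := M_{\phi_g|_{S_g^\circ}}$ is genuinely noncompact. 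By Thurston's geometrization for fibered $3$-manifolds (as invoked in Section 1.5), each $M_g^\circ$ is a complete, finite-volume hyperbolic $3$-manifold.

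Next I would apply Theorem \ref{theorem:main1}: the set $\T(\Psi_P^\circ)$ is finite, so the infinite sequence $\{M_g^\circ\}_{g \geq 2}$ takes only finitely many homeomorphism classes. Pigeonholing, choose an infinite subsequence $g_1 < g_2 < \cdots$ and a single noncompact, complete, finite-volume hyperbolic $3$-manifold $M$ with $M_{g_i}^\circ \cong M$ for every $i$. Each such homeomorphism equips $M$ with a fibration $f_i : M \to S^1$ whose monodromy is conjugate to $\phi_{g_i}|_{S_{g_i}^\circ}$. These fibrations are pairwise non-isotopic because the fibers $S_{g_i}^\circ$ have strictly increasing $|\chi|$ (since $|\chi(S_{g_i}^\circ)| \geq 2g_i - 2 \to \infty$), hence are pairwise non-homeomorphic.

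Finally, for each $i$ I re-insert the punctures: performing the Dehn filling of $M$ along the boundary slopes of the fiber $S_{g_i}^\circ$ described in Section 1.5 recovers the closed mapping torus $M_{\phi_{g_i}}$, equipped with its fibration over $S^1$ with closed fiber $S_{g_i}$ and monodromy $\phi_{g_i}$. Since $\phi_{g_i}$ was chosen to realize the minimum, $L(S_{g_i}) = \log(\lambda(\phi_{g_i}))$, which is the desired conclusion.

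The substantive step is the pigeonhole application of Theorem \ref{theorem:main1}; this is where the single 3-manifold $M$ is produced. The only routine subtlety is checking that distinct $g_i$ really do give genuinely distinct fibrations of the single manifold $M$, which is automatic from the unbounded Euler characteristics of the fibers. Nothing else in the argument requires machinery beyond what has been set up in the introduction.
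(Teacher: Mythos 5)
Your proposal is correct, and it is exactly the argument the paper leaves implicit: the Hironaka--Kin--Minakawa uniform bound $L(S_g)|\chi(S_g)| \le 2\log(2+\sqrt 3)$ places the minimizers $\phi_g$ all inside a single $\Psi_P$, so by Theorem~\ref{theorem:main1} the manifolds $M_{\phi_g|_{S_g^\circ}}$ fall into finitely many homeomorphism classes, and pigeonholing yields the desired $M$ together with the Dehn fillings of Section 1.5 recovering each $M_{\phi_{g_i}}$. One small inaccuracy worth flagging: you assert the fibrations of $M$ are pairwise non-isotopic ``because the fibers have strictly increasing $|\chi|$,'' but the quantity $|\chi(S_{g_i}^\circ)| = 2g_i - 2 + n_i$ (with $n_i$ the number of singular points) is only unbounded a priori, not monotone. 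What you actually want to say is that the $S_{g_i}^\circ$ have distinct genus $g_i$, hence are pairwise non-homeomorphic; this is cleaner and immediate. (In fact the corollary as stated only needs the Dehn fillings to produce fibers $S_{g_i}$ with $g_i \to \infty$, which you already have, so the remark about distinct fibrations of $M$ is a helpful sanity check rather than a load-bearing step.)
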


We do not know of an explicit example of a hyperbolic $3$--manifold as in Corollary \ref{C:magic?}; the work of \cite{KKT,KT,Ve}
gives one candidate.

We now ask if the Dehn filling is necessary in the statement of Corollary~\ref{C:magic?}:

\begin{question}
Does there exist a single $3$--manifold that contains infinitely many minimizers?  That is, does there exist a
hyperbolic $3$--manifold $M$ that fibers over the circle in infinitely many different ways $\phi_k:M \to S^1$, so that
the monodromies $f_k:S_{g_k,p_k} \to S_{g_k,p_k}$ satisfy $\log(\lambda(\phi_k)) = L(S_{g_k,p_k})$?
\end{question}

\subsection{Outline of the proof}

To explain the motivation for the proof, we again consider McMullen's construction.  In a fixed fibered $3$--manifold
$M \to S^1$, Oertel proved that all of the fibers $S(k)$ for which the dual cohomology classes lie in the open cone
described above are carried by a finite number of \textit{branched surfaces} transverse to the suspension flow.  A
branched surface is a $2$--dimensional analogue in a $3$--manifold of a train track on a surface; see \cite{Oe}.

Basically, an infinite sequence of fibers in $M$ whose dual cohomology classes are projectively converging are building
up larger and larger ``product regions'' because they all live in a single $3$--manifold.  These product regions can be
collapsed down, and the images of the fibers under this collapse define a finite number of branched surfaces.

Our proof follows this idea by trying to find ``large product regions'' in the mapping torus that can be collapsed, so
that the fiber projects onto a branched surface of uniformly bounded complexity.  In our case, we do not know that we
are in a fixed $3$--manifold (indeed, we may not be), so the product regions must come from a different source than in
the single $3$--manifold setting; this is where the small dilatation assumption is used.  Moreover, the collapsing
construction we describe does not in general produce a branched surface in a $3$--manifold.  However, the
failure occurs only along the closed trajectories through the singular points, and after removing the singularities,
the result of collapsing is indeed a $3$--manifold.

While this is only a heuristic (for example, the reader never actually needs to know what a branched surface
is), it is helpful to keep in mind while reading the proof, which we now outline.  First we replace all punctures by
marked points, so $S$ is a closed surface with marked points. Let $M = M_\phi$ be the mapping torus, which is now a
compact $3$--manifold.  We associate to any small dilatation pseudo-Anosov homeomorphism $\phi:S\to S$ a
Markov partition ${\cal R}$ with a ``small'' number of rectangles [Section \ref{S:markov partition}].

\medskip
{\em Step 1:} We prove that all but a uniformly bounded number of the rectangles of $\mathcal R$ are essentially
permuted [Lemma \ref{L:finite nonhomeo}].  This follows from the relationship with Perron--Frobenius matrices and their adjacency graphs together with an application of a result of Ham and Song \cite{HS} [Lemma \ref{L:matrix lemma}].  Moreover, we prove that any rectangle meets a uniformly bounded number of other rectangles
along its sides [Lemma \ref{L:bounded complexity 1}].  These rectangles, and their images, are used to construct a cell
structure $Y$ on $S$ [Section \ref{S:cell structures}].

\smallskip
{\em Step 2:} When a rectangle $R\in{\cal R}$ and all of the rectangles ``adjacent'' to it are taken homeomorphically
onto other rectangles of the Markov partition by both $\phi$ and $\phi^{-1}$, then we declare $R$ and $\phi(R)$ to be
``$Y$--equivalent'' [Section \ref{S:equiv rel}].  This generates an equivalence relation on rectangles with a uniformly bounded
number of equivalence classes [Corollary \ref{C:relation 1 prop}]. Moreover, if two different rectangles are equivalent
by a power of $\phi$, then that power of $\phi$ is cellular on those rectangles with respect to the cell structure $Y$ on $S$ [Proposition
\ref{P:sim1 and Y}].

\smallskip
{\em Step 3:  }In the mapping torus $M_\phi$, the suspension flow $\phi_t$ applied to the rectangle for $0 \leq t \leq
1$ defines a ``box'' in the 3--manifold, and applying this to all rectangles in ${\cal R}$ produces a decomposition of
$M$ into boxes which can be turned into a cell structure $\hat Y$ on $M$ so that $S \subset M$ with its cell structure $Y$ is a subcomplex [Section \ref{S:3-mfd}].

\smallskip
{\em Step 4: }If a rectangle $R$ is $Y$--equivalent to $\phi(R)$, then we call the box constructed from $R$ a ``filled
box''.  The filled boxes stack end-to-end into ``prisms'' [Section \ref{S:quotient I}]. Each of these prisms admits a
product structure of an interval times a rectangle, via the suspension flow.  We construct a quotient $N$ of $M$ by
collapsing out the flow direction in each prism.  The resulting $3$--complex $N$ has a uniformly bounded number
of cells in each dimension 0, 1, 2, 3, and with a uniform bound on the complexity of the attaching maps [Proposition
\ref{P:hat W bounded}].  There are finitely many such $3$--complexes [Proposition \ref{P:bounded
complexity finite}]. The uniform boundedness is a result of careful analysis of the cell structures $Y$ and $\hat Y$ on $S$ and $M$, respectively.

\smallskip
{\em Step 5: }The flow lines through the singular and marked points are closed trajectories in $M$ that become 1--subcomplexes in
the quotient $N$.   We remove all of these closed trajectories from $M$ to
produce $M^\circ \in \T(\Psi_P^\circ)$ and we remove their image $1$--subcomplexes from $N$ to produce $N^\circ$.  We then prove that the associated quotient $M^\circ \to N^\circ$ is
homotopic to a homeomorphism [Theorem \ref{T:quotient htpc homeo}].  Thus, $N^\circ \cong M^\circ$ is obtained from a finite
list of $3$--complexes by removing a finite $1$--subcomplex, which completes the proof.

\bigskip
\noindent \textbf{Acknowledgements.} We would like to thank Mladen Bestvina, Jeff Brock, Dick Canary, and Nathan
Dunfield for helpful conversations.


\section{Finiteness for CW--complexes}
\label{S:finiteness}

Our goal is to prove that the set of homeomorphism types of the $3$--manifolds in $\T(\Psi_P^\circ)$ is finite.  We will accomplish this by finding a finite
list of compact $3$--dimensional CW--complexes $\overline{\T(\Psi_P)}$ with the property that any $3$--manifold in $\T(\Psi_P^\circ)$
is obtained by removing a finite subcomplex of the $1$--skeleton from one of the $3$--complexes in $\overline{\T(\Psi_P)}$.

To prove that the set of $3$--complexes in $\overline{\T(\Psi_P)}$ is finite, we will first find a constant $K = K(P)$ so that
each complex in $\overline{\T(\Psi_P)}$ is built from at most $K$ cells in each dimension.  This alone is not enough to
conclude finiteness. For example, one can construct infinitely many homeomorphism types of $2$--complexes using one
$0$--cell, one $1$--cell, and one $2$--cell. To conclude finiteness, we must therefore also impose a bound to the
complexity of our attaching maps.

For an integer $K \geq 0$, we define the notion of \textit{$K$--bounded complexity} for an $n$--cell of a CW--complex
$X$ in the following inductive manner.  All $0$--cells have $K$--bounded complexity for all $K \geq 0$.  Having defined
$K$--bounded complexity for $(n-1)$--cells, we say that an $n$--cell has $K$--bounded complexity if the attaching map
$\partial \mathbb D^n \to X^{(n-1)}$ has the following properties:
\begin{enumerate}
\item The domain $\partial \mathbb D^n$ can be given the structure of an $(n-1)$--complex with at most $K$ cells in each dimension so that
each cell has $K$--bounded complexity, and
\item The attaching map is a homeomorphism on the interior of each cell.
\end{enumerate}
Observe that a $1$--cell has $K$--bounded complexity for all $K \geq 2$.  A $2$--cell has $K$--bounded complexity if
the boundary is subdivided into at most $K$ arcs, and the attaching map sends the interior of these arcs
homeomorphically onto the interiors of $1$--cells in $X^{(1)}$.

A finite cell complex has \textit{$K$--bounded complexity} if each cell has $K$--bounded complexity and there are at
most $K$ cells in each dimension.  Such a cell complex is special case of a \textit{combinatorial complex}; see
\cite[I.8 Appendix]{BrH}.

\begin{proposition} \label{P:bounded complexity finite}
Fix integers $K,n \geq 0$.  The set of CW--homeomorphism types of $n$--complexes with $K$--bounded complexity is
finite.
\end{proposition}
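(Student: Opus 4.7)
The plan is to prove the proposition by induction on $n$. The base case $n=0$ is immediate, since a $0$-complex with $K$-bounded complexity is a set of at most $K$ points, of which there are $K+1$ CW-homeomorphism types. For the inductive step, I assume the proposition in dimension $n-1$ and let $N_{n-1}$ denote the finite number of CW-homeomorphism types of $(n-1)$-complexes with $K$-bounded complexity. An $n$-complex $X$ with $K$-bounded complexity is specified by an $(n-1)$-skeleton $X^{(n-1)}$ together with at most $K$ attaching maps $\varphi_i \colon \partial\mathbb{D}^n \to X^{(n-1)}$. By the inductive hypothesis there are only $N_{n-1}$ choices for $X^{(n-1)}$.

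For each attaching map $\varphi_i$, the definition of $K$-bounded complexity requires that $\partial\mathbb{D}^n$ carry the structure of an $(n-1)$-complex with $K$-bounded complexity; by the inductive hypothesis applied again, there are at most $N_{n-1}$ possibilities for this cell structure up to CW-homeomorphism. Once the cell structure is fixed, $\varphi_i$ is cellular and restricts to a homeomorphism on the interior of each cell of $\partial\mathbb{D}^n$. Its combinatorial content is therefore a dimension-preserving function from the finitely many cells of $\partial\mathbb{D}^n$ into the finitely many cells of $X^{(n-1)}$, and there are only finitely many such functions. Combining the at most $K$ factors, one sees that the total combinatorial data describing $X$ ranges over a finite set.

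The remaining and decisive step is to check that this finite combinatorial data determines $X$ up to CW-homeomorphism. Suppose two attaching maps $\varphi$ and $\varphi'$ have the same combinatorial labeling with respect to a common cell structure on $\partial\mathbb{D}^n$. I would build, by a secondary induction on the skeleta of $\partial\mathbb{D}^n$, a cellular self-homeomorphism $h \colon \partial\mathbb{D}^n \to \partial\mathbb{D}^n$ with $\varphi = \varphi' \circ h$: on $0$-cells, use the combinatorial labeling; for a $k$-cell $\sigma$ of $\partial\mathbb{D}^n$, both $\varphi$ and $\varphi'$ restrict to homeomorphisms of $\intr(\sigma)$ onto the same open $k$-cell of $X^{(n-1)}$, and after composing with the already-constructed $h$ on $\partial\sigma$ one obtains a self-homeomorphism of $\partial\sigma \cong \partial\mathbb{D}^k$ that extends across $\sigma \cong \mathbb{D}^k$ by coning from the center. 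The same coning extends $h$ across $\mathbb{D}^n$, and the induced map on pushouts is the desired CW-homeomorphism of the two $n$-complexes.

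The main obstacle I expect is this last inductive construction of $h$: one must use at every stage both the hypothesis that the attaching maps of the cells of $\partial\mathbb{D}^n$ themselves have $K$-bounded complexity (so that the combinatorial data of the cell structure really pins down its topology at the previous level) and the hypothesis that the top-dimensional attaching maps are homeomorphisms on open cells (so that the labeling forces agreement up to a self-homeomorphism of each closed cell). Once this bookkeeping is done, the statement amounts to the fact that $K$-bounded complexity complexes are combinatorial in the sense of \cite{BrH}, and the finite count of combinatorial data assembled in the previous paragraphs yields the finite bound on CW-homeomorphism types.
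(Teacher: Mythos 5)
Your approach is genuinely different from the paper's: you induct on dimension and try to show directly that the combinatorial data of an attaching map (a cell structure on $\partial\mathbb D^n$ plus a dimension-preserving assignment of its cells to cells of $X^{(n-1)}$) determines the resulting complex up to CW--homeomorphism. The paper instead subdivides $X$ to a $\Delta$--complex by coning off skeleta, passes to the second barycentric subdivision to land in the world of honest simplicial complexes, and then counts: a simplicial complex is pinned down by declaring which vertex subsets span simplices, and only finitely many CW--structures subdivide to any given simplicial complex. The two routes are in the same spirit, but the paper's detour through simplicial complexes is precisely what discharges the step you flag as ``the remaining and decisive step,'' and I do not think your version of that step works as written.

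There are two problems. First, the coning construction does not produce a map $h$ with $\varphi = \varphi' \circ h$. After you extend $h$ over $\partial\sigma$ and cone to the interior of $\sigma$, the resulting $h|_{\intr(\sigma)}$ is determined entirely by $h|_{\partial\sigma}$ and the radial coordinate, whereas $\varphi|_{\intr(\sigma)}$ and $\varphi'|_{\intr(\sigma)}$ are two \emph{a priori} unrelated homeomorphisms onto the same open cell. So the coned map generally fails $\varphi = \varphi'\circ h$ on $\intr(\sigma)$, and the pushout argument doesn't give a CW--homeomorphism $X_\varphi \to X_{\varphi'}$. The natural fix is to define $h|_{\intr(\sigma)} = (\varphi'|_{\intr(\sigma)})^{-1}\circ \varphi|_{\intr(\sigma)}$, but then you must prove this extends continuously to $\overline\sigma$ and agrees there with the $h$ already built on lower skeleta; this is not automatic, particularly when the characteristic maps of the target cells are not injective on cell boundaries. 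Second, even before that, your recorded combinatorial data is insufficient: a dimension-preserving assignment of cells to cells does not remember orientations or ``which way'' each open cell is carried onto its image. Already in dimension $2$, the standard one-$2$-cell structures on the torus and Klein bottle have the same cell-to-cell assignment along $\partial\mathbb D^2$; the distinction is an orientation/word-in-the-free-group datum that your bookkeeping drops. To repair the argument you would need to enrich the combinatorial invariant (e.g.\ to the isomorphism type of a subdivision into simplices together with all face identifications), at which point you have essentially reconstructed the paper's proof.
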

\begin{proof}
Let $X$ be an $n$--complex with $K$--bounded complexity.  We may subdivide each cell of $X$ in order to obtain a
complex $X'$ where each cell is a simplex and each attaching map is a  homeomorphism on the interior of each cell; in the language of \cite{Ha}, such
a complex is called a $\Delta$--complex. What is more, we may choose $X'$ so that the number of cells is bounded above
by a constant that only depends on $K$ and $n$.  One way to construct $X'$ from $X$ is to subdivide inductively by
skeleta as follows.

Since any graph is a $\Delta$--complex, there is nothing to do for the 1-skeleton.  To make the 2--skeleton
into a $\Delta$--complex, it suffices to add one vertex to the interior of each 2--cell, and an edge connecting the new
vertex to each vertex of the boundary of the 2--cell; in other words, we cone off the boundary of each 2--cell. We then
cone off the $3$--cells, then $4$--cells, and so on, until at last we cone off all the $n$--cells.  The number of
simplices in any dimension is bounded by a function of $K$ and the dimension, so in particular, there is a uniformly
bounded number of simplices, depending only on $K$ and $n$.

The second barycentric subdivision $X''$ of $X'$ is a simplicial complex (this is true for any $\Delta$--complex), and
the number of vertices of $X''$ is bounded above by some constant $K'$ that only depends on $K$ and $n$.  Observe that $X$ is
homeomorphic to $X''$.  In fact, the CW--homeomorphism type of $X$ is determined by the simplicial isomorphism type
of $X''$, up to finite ambiguity:  there are
only finitely many CW--homeomorphism types that subdivide to the simplicial
isomorphism type of $X''$.  We also note that $X''$ is determined up to simplicial isomorphism by specifying
which subsets of the $K'$ vertices span a simplex.  Therefore, there are only finitely many simplicial isomorphism
types of $X''$ and so only finitely many CW--homeomorphisms types of $X$.
\end{proof}

\bigskip


\section{The adjacency graph for a Perron--Frobenius matrix} \label{S:spectral bound}

Our proof of Theorem \ref{theorem:main1} requires a few facts about Perron--Frobenius matrices and their associated
adjacency graphs.  After recalling these definitions, we give a result of Ham and Song \cite{HS} that bounds the complexity of an adjacency graph in terms of the spectral radius of the associated matrix.

\medskip

\begin{figure}[h!]
\centerline{}\centerline{}
\begin{center}
\ \psfig{file=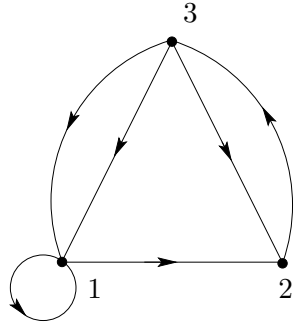,height=1.5truein} \caption{The adjacency graph for the $3 \times 3$ matrix $A$. }  \label{F:adjacency}
\end{center}
  \setlength{\unitlength}{1in}
  \begin{picture}(0,0)(0,0)
    \put(2.7,.8){$1$}
    \put(3.7,.8){$2$}
    \put(3.2,2.25){$3$}
  \end{picture}
\end{figure}

Let $A$ be an $n\times n$ {\em Perron--Frobenius matrix}, that is, $A$ has nonnegative integer entries $a_{ij}\geq 0$
and there exists a positive integer $k$ so that each entry of $A^k$ is positive. Associated to any Perron--Frobenius
matrix $A$ is its {\em adjacency graph} $\Gamma_A$. This is a directed graph with $n$ vertices labeled $\{1,\ldots
,n\}$ and $a_{ij}$ directed edges from vertex $i$ to vertex $j$. For example, Figure \ref{F:adjacency} shows the
adjacency graph for the matrix:
\[ A = \left( \begin{array}{ccc} 1 & 1 & 0\\ 0 & 0 & 1\\ 2 & 1 & 0\\
\end{array} \right) \]
The $(i,j)$--entry of $A^m$ is the number of distinct combinatorial directed paths of length $m$ from vertex $i$ to vertex $j$ of $\Gamma_A$.  In particular, it follows that if $A$ is a Perron--Frobenius matrix, then there is a directed path from any vertex to any other vertex.

\begin{remark} A Perron--Frobenius matrix is sometimes defined as a matrix $A$ with nonnegative integer entries with the property that, for each pair $(i,j)$, there is a $k$---depending on $(i,j)$---so that the $(i,j)$--entry of $A^k$ is positive.  As a consequence of our stronger definition, every positive power of a Perron--Frobenius matrix is also Perron--Frobenius.
\end{remark}

\bigskip

The Perron--Frobenius Theorem (see e.g.~\cite[\S III.2]{Ga}) tells us that $A$ has a positive eigenvalue $\lambda(A)$, called the
\textit{spectral radius}, which is strictly greater than the modulus of every other eigenvalue and which has a
$1$--dimensional eigenspace spanned by a vector with all positive entries.  Moreover, one can bound $\lambda(A)$ from
above and below by the maximal and minimal row sums, respectively:
\begin{equation}
\label{eq:PF1} \min_i\sum_ja_{ij}\leq \lambda(A)\leq \max_i\sum_ja_{ij}.
\end{equation}

For a directed graph $\Gamma$ and a vertex $v$ of $\Gamma$, let $\deg_{\textnormal{out}}(v)$ and $\deg_{\textnormal{in}}(v)$ denote the number
of edges beginning and ending at $v$, respectively.  Since each edge has exactly one initial endpoint and one terminal
endpoint, it follows that the number of edges of $\Gamma$ is precisely
\[ \sum_{v \in \Gamma^{(0)}} \deg_{\textnormal{out}}(v) = \sum_{v \in \Gamma^{(0)}} \deg_{\textnormal{in}}(v). \]

The following fact is due to Ham--Song \cite[Lemma 3.1]{HS}

\begin{lemma} \label{L:matrix lemma}
Let $A$ be an $n\times n$ Perron--Frobenius matrix with adjacency graph $\Gamma_A$.  We have
\begin{equation}
 \label{eq:deg1}
 1+\sum_{v \in \Gamma_A^{(0)}} (\deg_{\textnormal{out}}(v) - 1) = 1+\sum_{v \in \Gamma_A^{(0)}} (\deg_{\textnormal{in}}(v) - 1)  \leq \lambda(A)^n.
\end{equation}
\end{lemma}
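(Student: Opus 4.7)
The equality is immediate: both middle expressions equal $1 + |E| - n$, where $|E|$ denotes the total number of directed edges in $\Gamma_A$, since each edge contributes once to both $\sum_v \deg_{\textnormal{out}}(v)$ and $\sum_v \deg_{\textnormal{in}}(v)$.

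For the inequality, the plan is to combine a spectral input with a combinatorial input. Since $A$ is Perron--Frobenius, so is every positive power, and in particular $A^n$ is Perron--Frobenius with $\lambda(A^n) = \lambda(A)^n$. Applying the row sum bound \eqref{eq:PF1} to $A^n$ yields
\[ \lambda(A)^n \;\geq\; \min_{i} \sum_{j} (A^n)_{ij}, \]
and the right-hand side is exactly the minimum over vertices $v_0$ of the number of length-$n$ directed walks in $\Gamma_A$ starting at $v_0$. It therefore suffices to prove the combinatorial lower bound: for every vertex $v_0$, the number of length-$n$ walks in $\Gamma_A$ starting at $v_0$ is at least $1 + |E| - n$.

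To prove this, fix $v_0$. Since $\Gamma_A$ is strongly connected, it admits a spanning out-arborescence $T$ rooted at $v_0$: a subgraph with $n-1$ edges, all directed away from $v_0$, in which every vertex $u$ is reachable from $v_0$ by a unique directed tree path $Q_u$, of length $q_u \leq n-1$. The remaining $|E| - (n-1) = 1 + |E| - n$ edges of $\Gamma_A$ are the non-tree edges. To each non-tree edge $e = (u, w)$ associate the length-$n$ walk $W_e$ obtained by concatenating $Q_u$, the edge $e$, and an arbitrary walk of length $n - q_u - 1$ starting at $w$; such an extension exists because every vertex of $\Gamma_A$ has positive out-degree.

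The heart of the argument, and the main (mild) obstacle, is verifying distinctness of the walks $W_e$, which relies on the rigidity of the tree structure. Suppose $W_{e_1} = W_{e_2}$ for non-tree edges $e_i = (u_i, w_i)$ with $q_{u_1} \leq q_{u_2}$. If $q_{u_1} < q_{u_2}$, then at step $q_{u_1} + 1$ the walk $W_{e_1}$ traverses the non-tree edge $e_1$ while $W_{e_2}$ traverses an edge of $T$ along $Q_{u_2}$, a contradiction. If $q_{u_1} = q_{u_2}$, then uniqueness of tree paths gives $Q_{u_1} = Q_{u_2}$ and hence $u_1 = u_2$, so the disagreement of $e_1$ and $e_2$ at step $q_{u_1}+1$ again yields a contradiction. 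Thus the $1 + |E| - n$ walks $W_e$ are pairwise distinct, completing the proof.
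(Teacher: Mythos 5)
Your proof is correct and follows the same overall strategy as the paper's (which the paper attributes to Ham and Song): bound $\lambda(A)^n$ below by the minimum row sum of $A^n$, interpret that row sum as a count of length-$n$ walks out of a vertex $v_0$, and compare this count with the number of non-tree edges of a spanning tree by tracking the first non-tree edge of each walk. The genuine refinement in your version is the choice of tree. The paper fixes an arbitrary spanning tree $T$ and asserts that the map sending each length-$n$ walk from $v_0$ to its first non-tree edge is a surjection onto the non-tree edges; but for a generic spanning tree of a directed graph there need not be a directed walk inside $T$ from $v_0$ to the tail $u$ of a given non-tree edge, so the asserted surjectivity does not obviously hold. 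By taking $T$ to be a spanning out-arborescence rooted at $v_0$ (which exists because $\Gamma_A$ is strongly connected), you guarantee a tree path $Q_u$ from $v_0$ to every $u$, and you then run the count in the opposite direction: an explicit injection from non-tree edges into length-$n$ walks, with distinctness verified by comparing the walks at the step where the shorter tree path ends. This is a small but real sharpening of the argument -- it supplies the justification the paper's surjectivity claim needed -- while the underlying idea and all the quantitative content are the same.
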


Both sums in the statement of Lemma~\ref{L:matrix lemma} are equal to $-\chi(\Gamma_A)$.  In particular, Lemma~\ref{L:matrix lemma} bounds the number of homeomorphism types of graphs $\Gamma_A$ in terms of $\lambda(A)$, since $\Gamma_A$ cannot have any valence
$1$ vertices.

As Lemma~\ref{L:matrix lemma} is central to our paper, we give the proof due to Ham and Song.

\begin{proof}

Let $T$ be a spanning tree for $\Gamma_A$.  Since $T^{(0)}=\Gamma_A^{(0)}$, we have that $T$ has exactly $n$ vertices.  Since $T$ is a tree, we have $\chi(T) = 1$, and so $T$ has $n-1$ edges.

We claim that the number of directed paths of length $n$ starting from a vertex $v$ of $\Gamma_A$ is greater than or equal to the number of edges of $\Gamma_A$ not contained in $T$.  Indeed, each path of length $n$ must leave $T$, and so there is a surjective set map from the set of directed paths of length $n$ based at $v$ to the set of edges of $\Gamma_A$ not contained in $T$: for each such path, take the first edge in the path that is not an edge of $T$.

Let $v_0$ be the vertex of $\Gamma_A$ corresponding to the row of $A^n$ that has the smallest row sum.  We have:
\begin{eqnarray*}
\lambda(A)^n &\geq& \mbox{smallest row sum for } A^n \\
&=& \mbox{number of directed paths of length } n \mbox{ starting from } v_0 \\
&\geq& \mbox{number of edges of } \Gamma_A \mbox{ not contained in } T \\
&=& \left(\sum_{v \in \Gamma_A^{(0)}} \deg_{\textnormal{out}}(v)\right) - (n-1) \\
&=& 1+\sum_{v \in \Gamma_A^{(0)}} (\deg_{\textnormal{out}}(v)-1).
\end{eqnarray*}
Since $\sum \deg_{\textnormal{out}}(v) = \sum \deg_{\textnormal{in}}(v)$, we are done.
\end{proof}


\section{Pseudo-Anosov homeomorphisms and Markov partitions} \label{S:markov partition}

Let $S$ be a surface of genus $g$ with $p$ marked points (marked points are more convenient for us than punctures in
what follows).  We still write $\chi(S) = 2- 2g-p$ and assume $\chi(S) < 0$.

\subsection{Pseudo-Anosov homeomorphisms}

First recall that one can describe a complex structure and integrable holomorphic quadratic differential $q$ on $S$ by
a Euclidean cone metric with the following properties:
\begin{enumerate}
\item Each cone angle has the form $k\pi$ for some $k \in \mathbb Z_+$, with $k \geq 2$ at any unmarked point $z \in
S$.
\item There is an orthogonal pair of singular foliations $\mathcal F_h$ and $\mathcal F_v$ on $S$, called the \textit{horizontal} and \textit{vertical foliations}, respectively, with all singularities at the cone points, and with all leaves geodesic.
\end{enumerate}
Such a metric has an atlas of \textit{preferred coordinates} on the complement of the singularities, which are local
isometries to $\R^2$ and for which the vertical and horizontal foliations are sent to the vertical and horizontal
foliations of $\R^2$.

\bigskip

A homeomorphism $\phi:S \to S$ is pseudo-Anosov if and only if there exists a complex structure on $S$, a quadratic differential $q$ on $S$, and a $\lambda > 1$, so that in any preferred coordinate chart for $q$, the map $\phi$ is given by
\[ (x,y) \mapsto (\lambda x + c,\frac{y}{\lambda} + c') \]
for some $c,c' \in \R$.  In particular, observe that $\phi$ preserves $\mathcal F_h$ and $\mathcal F_v$.  The horizontal foliation is called the \textit{stable foliation}, and the leaves are all stretched by $\lambda$;
the vertical foliation is called the \textit{unstable foliation}, and the leaves of this foliation are contracted.  The number
$\lambda$ is nothing other than the dilatation $\lambda = \lambda(\phi)$.

\subsection{Markov partitions}

The main technical tool we will use for studying pseudo-Anosov homeomorphisms is the theory of Markov partitions, which we now
describe.  Let $\phi$ be a pseudo-Anosov homeomorphism, and let $q$ be a quadratic differential as in our description of a pseudo-Anosov homeomorphism.
A \textit{rectangle} (for $q$) is an immersion
\[ \rho:[0,1] \times [0,1] \to S\]
that is affine with respect to the preferred coordinates, and that satisfies the following properties:
\begin{enumerate}
\item $\rho$ maps the interior injectively onto an open set in $S$,
\item $\rho(\{x\} \times [0,1])$ is contained in a leaf of $\mathcal F_v$ and $\rho([0,1] \times \{y\})$ is contained in a leaf of $\mathcal
F_h$, for all $x \in (0,1)$ and $y \in (0,1)$, and
\item $\rho(\partial ([0,1] \times [0,1]))$ is a union of arcs of leaves and singularities of $\mathcal F_v$ and $\mathcal F_h$.
\end{enumerate}
As is common practice, we abuse notation by confusing a rectangle and its image $R = \rho([0,1] \times [0,1])$.  The
\textit{interior} of a rectangle $\intr(R)$ is the $\rho$--image of its interior.

For any rectangle $R$ define
$$\partial_v R=\rho(\{0,1\}\times [0,1])\ \ \mbox{and}\ \  \partial_h R=\rho([0,1]\times \{0,1\}).$$
We thus have $\partial R=\partial_v R \cup \partial_h R$.

\medskip

A {\em Markov partition}\footnote{What we call a ``Markov partition'' is sometimes called a ``pre-Markov partition'' in
the literature; see, e.g., Expos\'{e} 9, Section 5 of \cite{FLP}.} for a pseudo-Anosov homeomorphism $\phi:S \to S$ is
a finite set of rectangles $\mathcal R = \{R_i\}_{i=1}^n$ satisfying the following:
\begin{enumerate}
\item $\intr(R_i)\cap \intr(R_j)=\emptyset$ for $i \neq j$,
\item $S=R_1\cup\cdots\cup R_n$,
\item $\phi(\cup \partial_v R_i) \subseteq \cup\partial_v R_i$,
\item $\phi^{-1}(\cup \partial_h R_i)\subseteq \cup \partial_h R_i$, and
\item each marked point of $S$ lies on the boundary of some $R_i$.
\end{enumerate}

\begin{figure}[htb]
\centerline{}\centerline{}
\begin{center}
\ \psfig{file=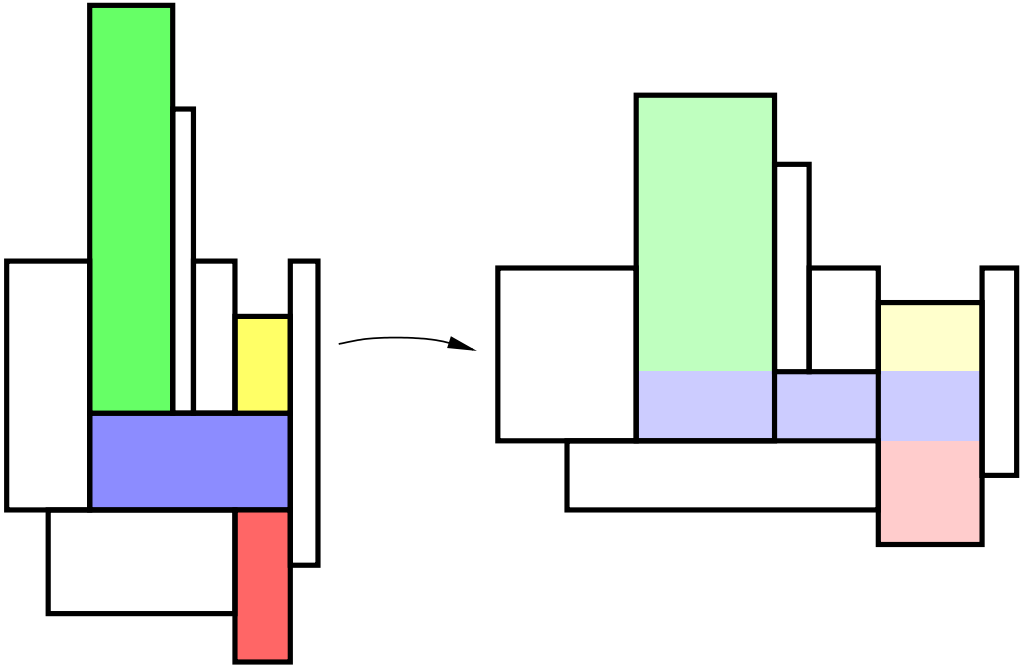,height=3.5truein} \caption{A rectangle in a Markov partition is stretched horizontally and compressed vertically, then mapped over other rectangles, sending vertical sides to vertical sides.  In this local picture there are 9 rectangles in one part of the surface being mapped over 8 rectangles in some other part of the surface.  The colored rectangles are the ones that are mixed, while the other rectangles are unmixed.}  \label{F:markovpic1}
\end{center}
\end{figure}

We say that a Markov partition  $\mathcal R = \{ R_i\}_{i=1}^n$ for $\phi$ is \emph{small} if
\[ |\mathcal R| \leq 9|\chi(S)|. \]
The following lemma appears in the work of Bestvina--Handel; see  \S 3.4, \S 4.4, and \S 5 of \cite{BH}.

\begin{lemma}
Suppose $\chi(S)<0$.  Every pseudo-Anosov homeomorphism of $S$ admits a small Markov partition.
\end{lemma}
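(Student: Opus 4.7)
The plan is to invoke the Bestvina--Handel train track technology to produce, for any pseudo-Anosov $\phi\colon S\to S$, a trivalent $\phi$--invariant train track $\tau$ on the underlying closed surface $\bar S$ of genus $g$, carrying the horizontal foliation $\mathcal F_h$, and whose complementary regions are cusped polygonal disks in bijection with the singularities of $\mathcal F_h$. From $\tau$ I would build the Markov partition by thickening each branch into a rectangle in the $\mathcal F_v$--direction, and then bound the number of rectangles by combining the Euler characteristic of $\bar S$ with the Poincar\'e--Hopf formula for $\mathcal F_h$.

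For the partition itself, for each branch $b$ of $\tau$ I take $R_b$ to be the union of $\mathcal F_v$--leaf segments crossing $b$ transversely, cut off at each end by the $\mathcal F_v$--arcs through the switches of $\tau$ at the two endpoints of $b$. The interiors of the $R_b$ are disjoint and cover $\bar S$, and conditions (3) and (4) of a Markov partition hold because the $\phi$--invariance of $\tau$ arranges that $\phi$ sends $\mathcal F_v$--arcs through switches into the union of $\mathcal F_v$--arcs through switches (and $\phi^{-1}$ does the analogous thing for horizontal boundaries, using that $\tau$ carries the stretched foliation). Marked points that are singularities of $\mathcal F_h$ are automatically cusp-vertices of the $R_b$; a marked point which is a regular point of $\mathcal F_h$ can be put on a rectangle boundary by subdividing its containing rectangle along the $\mathcal F_v$--leaf through it, at the cost of one extra rectangle.

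For the count, let $v$, $e$, $f$ denote the numbers of switches, branches, and complementary regions of $\tau$, so that $|\mathcal R|=e$ before any subdivision. Trivalence gives $3v=2e$, and the Euler characteristic $v-e+f=\chi(\bar S)$ combines with this to give $e=3f+3|\chi(\bar S)|$. Each complementary region corresponds to a $k_i$--prong singularity of $\mathcal F_h$, and Poincar\'e--Hopf on $\bar S$ yields $\sum_i k_i = 2f + 2|\chi(\bar S)|$. Unmarked singularities have $k_i\ge 3$, and there are at most $p$ marked $1$--prongs, so letting $f_1\le p$ count the $1$--prongs gives $\sum k_i \ge 3f - 2f_1$, hence $f \le 2p + 2|\chi(\bar S)|$. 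Therefore
\[
e \le 6p + 9|\chi(\bar S)| = 9|\chi(S)| - 3p,
\]
using $|\chi(S)| = |\chi(\bar S)| + p$. The at most $p$ additional rectangles coming from subdivisions at regular marked points are absorbed by the slack $3p$, so $|\mathcal R|\le 9|\chi(S)|$ as required.

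I expect the main obstacle to be the input from Bestvina--Handel: producing a $\phi$--invariant trivalent train track $\tau$ whose complementary regions correspond exactly to the singularities of $\mathcal F_h$ is precisely what their algorithm provides (after passing to an efficient representative, as in the sections of \cite{BH} cited in the statement). Once that input is granted, the rectangle construction and the combinatorial accounting above are essentially routine, with the only minor subtlety being the treatment of marked points with $k=2$ via subdivision, which the slack in the bound accommodates cleanly.
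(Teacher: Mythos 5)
The paper itself gives no written proof of this lemma: it simply cites \S 3.4, \S 4.4, and \S 5 of Bestvina--Handel. Your proposal, by contrast, reconstructs a self-contained argument via an invariant train track and an Euler-characteristic/Poincar\'e--Hopf count, which is a genuinely different (more explicit) route to the same conclusion. The skeleton is sound: thicken branches into rectangles, get $e = 3f - 3\chi(\bar S)$ from trivalence and $v - e + f = \chi(\bar S)$, bound $f$ using $\sum_i(1 - k_i/2) = \chi(\bar S)$, and absorb marked regular points by subdividing. That this yields the constant $9$ is a nice observation.

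Two points deserve more care. First, Bestvina--Handel do not hand you a \emph{trivalent} invariant track; their efficient representative is a graph map whose switches can have higher valence. You do not actually need trivalence, though: for any train track $3v \leq 2e$, so the Euler count gives $e \leq 3f - 3\chi(\bar S)$, which is all your estimate uses. Alternatively, you can split higher-valence switches (this preserves $v - e$ and $f$), so the equality can be arranged. Either way, state the step rather than assert trivalence as given. Second, your inequality $\sum k_i \geq 3f - 2f_1$ tacitly assumes no complementary region is a bigon. Since the surface has marked points, bigon regions (corresponding to $2$--pronged marked points) can occur; if $f_2$ counts them, the correct bound is $\sum k_i \geq 3f - 2f_1 - f_2$. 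This is harmless because $f_1 + f_2 \leq p$ still gives $f \leq 2f_1 + f_2 + 2|\chi(\bar S)| \leq 2p + 2|\chi(\bar S)|$, and the final inequality $|\mathcal R| \leq 9|\chi(S)|$ survives, but the intermediate step as written is not quite right. Finally, the verification that conditions (3) and (4) of the Markov partition hold from ``$\phi$--invariance of $\tau$'' is waved at rather than proved; this is where the real content of \cite{BH} lives, and a careful write-up would need to track how the strips of the fibered surface give the vertical/horizontal boundary inclusions, not just the carrying of $\tau$ by itself.
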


\subsection{The adjacency graph of a Markov partition}

Given a Markov partition $\mathcal R = \{R_i\}_{i=1}^n$ for $\phi:S \to S$, there is an associated nonnegative $n
\times n$ integral matrix $A = A(\phi,\mathcal R)$, called the \textit{transition matrix} for $(\phi,\mathcal R)$, whose
$(i,j)$--entry is
$$a_{ij}=\left| \phi^{-1}(\intr(R_j))\cap R_i\right|,$$
where the absolute value sign denotes the number of components.  That is, $A$ records how many times the $i^{th}$
rectangle ``maps over'' the $j^{th}$ rectangle after applying $\phi$.

The relation between the Perron--Frobenius theory and the pseudo-Anosov theory is provided by the following; see 
e.g. \cite[Expos\'{e} 10]{FLP} or \cite[pp. 101--102]{CB}.

\begin{proposition} If $\phi:S \to S$ is pseudo-Anosov and $\mathcal R$ is a Markov partition for $\phi$ with
transition matrix $A$, then $A$ is a Perron--Frobenius matrix and $\lambda(A) = \lambda(\phi)$.
\end{proposition}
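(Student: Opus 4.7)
The plan is to exhibit $\lambda = \lambda(\phi)$ as an eigenvalue of $A$ attached to a strictly positive eigenvector, and separately to show that some power $A^k$ is entrywise positive; the Perron--Frobenius theorem then identifies $\lambda$ as $\lambda(A)$. The key geometric fact I would extract first from the Markov axioms is that $\phi(R_i) \cap \intr(R_j)$ has exactly $a_{ij}$ components, each a sub-rectangle of $R_j$ that spans $R_j$ horizontally from one vertical side to the other. The condition $\phi(\bigcup \partial_v R_i) \subseteq \bigcup \partial_v R_i$ forces the vertical sides of each component to lie on $\partial_v R_j$, while $\phi^{-1}(\bigcup \partial_h R_i) \subseteq \bigcup \partial_h R_i$ prevents arcs of $\phi(\partial_h R_i)$ from terminating in the interior of $R_j$; combined with the fact that $\phi$ contracts vertically, these together give the full-width crossing picture.

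Once the crossing picture is in place, the eigenvector falls out of Euclidean geometry in preferred coordinates. Writing $h_i$ for the length of a horizontal side of $R_i$ and using that $\phi$ stretches horizontally by $\lambda$, the rectangle $\phi(R_i)$ has total horizontal length $\lambda h_i$, and this length is partitioned into $a_{ij}$ full horizontal crossings of $R_j$ of length $h_j$, so that
\[ \sum_j a_{ij}\, h_j \;=\; \lambda\, h_i. \]
Thus $h = (h_1,\ldots,h_n)$ is a strictly positive right eigenvector of $A$ with eigenvalue $\lambda$; a dual computation with $\phi^{-1}$, whose images of rectangles are long vertical strips, applied to the vertical side lengths $v_i$ produces $\sum_i a_{ij} v_i = \lambda v_j$, furnishing a positive left eigenvector as well.

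To verify that $A$ is Perron--Frobenius, I would apply the same crossing analysis to $\phi^k$, noting that $(A^k)_{ij}$ counts the components of $\phi^k(R_i) \cap \intr(R_j)$, so it suffices to find a single $k$ with $\phi^k(R_i) \cap \intr(R_j) \neq \emptyset$ for every pair $(i,j)$. Since $\phi^k(R_i)$ contains a horizontal arc of length $\lambda^k h_i$ lying in a single stable leaf, and every half-leaf of the stable foliation of a pseudo-Anosov is dense in $S$, a sufficiently long such arc meets every open set $\intr(R_j)$; taking a maximum over the $n^2$ pairs produces a uniform $k$. The Perron--Frobenius theorem then tells us that $\lambda(A)$ is the unique eigenvalue admitting a strictly positive eigenvector, so $\lambda(A) = \lambda$. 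The main technical point is the opening step — reading off the full-width crossing property from the two Markov boundary conditions together with the vertical contraction of $\phi$ — since once this combinatorial picture is in hand, both the eigenvector identity and the conversion of minimality of the stable foliation into entrywise positivity of some $A^k$ are straightforward bookkeeping.
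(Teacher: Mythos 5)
The paper does not prove this proposition at all --- it is stated with a pointer to FLP [Expos\'e 10] and Casson--Bleiler [pp.\ 101--102] --- so there is no in-house proof to compare against. Your argument is the standard one from those sources, and its outline is correct: extract the full-width crossing picture from the Markov boundary conditions; read off that the vector of horizontal side lengths $h=(h_1,\dots,h_n)$ is a strictly positive right eigenvector with eigenvalue $\lambda(\phi)$ (the partition $\lambda h_i=\sum_j a_{ij}h_j$ works because, once one knows each component of $\phi(R_i)\cap\intr(R_j)$ both spans $R_j$ horizontally and spans $\phi(R_i)$ vertically, the components tile $\phi(R_i)$ in a single horizontal row); establish primitivity; and invoke Perron--Frobenius uniqueness of the positive eigenvector. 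Two small points of bookkeeping in the crossing step: condition (3) is what keeps $\phi(\partial_v R_i)$ off $\intr(R_j)$ and hence forces horizontal spanning, while condition (4) --- equivalently $\bigcup\partial_h R_k\cap\phi(\intr R_i)=\emptyset$ --- is what guarantees the components span $\phi(R_i)$ vertically, so that the crossings fill out the full horizontal length $\lambda h_i$ with nothing left over. Your sentence attributes to (4) a fact that actually comes from (3), but you clearly have the right picture.

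The one place that is genuinely under-justified is the primitivity argument. You pass from ``every stable half-leaf is dense'' to ``a sufficiently long stable arc meets every $\intr(R_j)$,'' and then take a maximum over pairs $(i,j)$. Density of each individual half-leaf does not, by itself, give a length $L_j$ that works uniformly for all stable segments; and the arcs $\phi^k(\alpha_i)$ have moving basepoints $\phi^k(p)$, so ``follow one half-leaf for long enough'' is not literally what is happening. What you need is the quantitative version of minimality: for every open $U\subset S$ there is $L(U)<\infty$ such that \emph{every} stable leaf segment of length $L(U)$ meets $U$. This is true --- it follows from minimality of the pseudo-Anosov stable foliation together with compactness of $S$ (e.g.\ by a Hausdorff-limit argument: a sequence of longer and longer leaf segments avoiding $U$ would produce a nonempty closed saturated set disjoint from $U$), but it deserves to be stated, since it is exactly what makes the ``take a maximum over the $n^2$ pairs'' step legal. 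With that lemma in hand, the rest of your argument (including the implicit use that the transition matrix of $(\phi^k,\mathcal R)$ is $A^k$) is sound.
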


Let $\phi:S \to S$ be a pseudo-Anosov homeomorphism and let $\mathcal R$ be a Markov partition for $\phi$.  We say that a
rectangle $R$ of $\mathcal R$ is \emph{unmixed by $\phi$} if $\phi$ maps $R$ homeomorphically onto a rectangle $R' \in
\mathcal R$, and we say that it is \emph{mixed by $\phi$} otherwise.

Given $R \in \mathcal R$, any rectangle $R' \in \mathcal R$ for which $\phi^{-1}(\intr R') \cap R \neq \emptyset$ is
called a \textit{target rectangle} of $R$ for $\phi$.  The \textit{degree of $R \in \mathcal R$ for $\phi$}
is defined as
\[ \deg(R,\phi) = \left| \phi^{-1}\left( \bigcup_{R' \in \mathcal R} \intr(R') \right) \bigcap R \right|\]
Informally, $\deg(R,\phi)$ records the total number of rectangles over which $R$ maps (counted with multiplicity). The
\textit{codegree} of a rectangle $R \in \mathcal R$ for $\phi$ is the sum
\[ \codeg(R,\phi) = \sum_{R'} \deg(R',\phi) \]
where the sum is taken over all rectangles $R' \in \mathcal R$ having $R$ as a target rectangle.

We will omit the dependence on $\phi$ when it is clear from context, and will simply write $\deg(R)$ and $\codeg(R)$.
However, it will sometimes be important to make this distinction since a Markov partition for $\phi$ is also a Markov
partition for every nontrivial power of $\phi$.

\medskip
It is immediate from the definitions that if $\Gamma$ is the adjacency matrix associated to $(\phi,\mathcal R)$, and
$v_R$ is the vertex associated to the rectangle $R \in \mathcal R$, then $\deg(R,\phi) = \deg_{\textnormal{out}}(v_R)$ and
$\codeg(R,\phi) = \deg_{\textnormal{in}}(v_R)$. Translating other standard properties of adjacency graphs for Perron--Frobenius
matrices in terms of Markov partitions, we obtain the following.

\begin{proposition} \label{P:graph/partition}
Suppose $\phi:S \to S$ is pseudo-Anosov, $\mathcal R$ is a Markov partition for $\phi$, and $\Gamma$ is the adjacency
graph for the associated transition matrix.  Let $R \in \mathcal R$, and let $v_R$ be the associated vertex of $\Gamma$.  The number of distinct combinatorial directed paths in $\Gamma$ that have length $k$ and that
emanate from $v_R$ is precisely $\deg(R,\phi^k)$.  The number of distinct combinatorial paths of length $k$
leading to $v_R$ is precisely $\codeg(R,\phi^k)$.
\end{proposition}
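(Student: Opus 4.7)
The plan is to reduce both statements of the proposition to the single structural identity $A(\phi^k,\mathcal R)=A^k$, where $A=A(\phi,\mathcal R)$. Given this identity, both conclusions follow at once: the $(i,j)$-entry of $A^k$ counts the directed paths of length $k$ from $v_i$ to $v_j$ in $\Gamma$ (by standard matrix combinatorics), and by condition~(1) of the Markov partition we have $\deg(R_i,\phi^k) = \sum_j (A(\phi^k,\mathcal R))_{ij}$ (the $i$-th row sum) and $\codeg(R_i,\phi^k)=\sum_j (A(\phi^k,\mathcal R))_{ji}$ (the $i$-th column sum). These are precisely the counts of length-$k$ paths emanating from and leading to $v_i$, respectively.

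The identity $A(\phi^k,\mathcal R)=A^k$ is proved by induction on $k$, the base case $k=1$ being the definition of $A$. The geometric content of the inductive step comes from the Markov property: conditions~(3) and~(4), combined with the fact that $\phi$ preserves the horizontal and vertical foliations, force the following local picture. Each component of $\phi^{-1}(\intr R_l)\cap R_i$ is a \emph{vertical sub-rectangle} of $R_i$ spanning $R_i$ from one horizontal side to the other, and $\phi$ restricts on such a component to a homeomorphism onto a \emph{horizontal sub-rectangle} of $R_l$ spanning $R_l$ from one vertical side to the other. Consequently, for any connected set $X\subset R_i$ with $\phi(X)\subset\bigcup_l \intr R_l$, there is a unique $l$ with $\phi(X)\subset \intr R_l$, and a unique component of $\phi^{-1}(\intr R_l)\cap R_i$ containing $X$.

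Using this local picture, I exhibit a bijection
\[
\bigl\{\text{components of }\phi^{-k}(\intr R_j)\cap R_i\bigr\}\ \longleftrightarrow\ \bigsqcup_{l}\bigl\{\text{components of }\phi^{-1}(\intr R_l)\cap R_i\bigr\}\times\bigl\{\text{components of }\phi^{-(k-1)}(\intr R_j)\cap R_l\bigr\}.
\]
The forward map sends a component $C$ on the left to the pair $(V,\phi(C))$, where $l$ is the unique index with $\phi(C)\subset\intr R_l$ and $V$ is the unique component of $\phi^{-1}(\intr R_l)\cap R_i$ containing $C$; the image $\phi(C)$ is then identified with a component of $\phi^{-(k-1)}(\intr R_j)\cap R_l$. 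The inverse sends a pair $(V,W)$ to the unique component of $\phi^{-k}(\intr R_j)\cap R_i$ lying inside $V\cap \phi^{-1}(W)$, which is well-defined because $\phi|_V$ is a homeomorphism onto a horizontal strip spanning $R_l$ (and so meets each vertical sub-rectangle of $R_l$ in a connected strip). Counting and applying the inductive hypothesis gives $(A(\phi^k,\mathcal R))_{ij} = \sum_l a_{il}(A^{k-1})_{lj} = (A^k)_{ij}$, completing the induction.

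The main obstacle is the local geometric claim that each component of $\phi^{-1}(\intr R_l)\cap R_i$ is a vertical sub-rectangle of $R_i$ and that $\phi$ restricts on it to a homeomorphism onto a horizontal sub-rectangle of $R_l$ spanning $R_l$. This requires carefully unpacking conditions~(3) and~(4) of the Markov partition together with the foliation-preserving structure of $\phi$ in the quadratic differential picture from Section~\ref{S:markov partition}. Once this picture is established, the bijection above is routine, and the proposition follows.
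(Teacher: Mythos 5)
Your proof is correct and it is essentially the argument the paper has in mind: the paper states Proposition~\ref{P:graph/partition} without proof, presenting it as a direct translation of the standard fact that the $(i,j)$-entry of $A^m$ counts length-$m$ directed paths (recorded in Section~3) together with the multiplicativity $A(\phi^k,\mathcal R)=A^k$ of transition matrices under iteration. Your write-up simply supplies the inductive proof of that multiplicativity from the local geometry of the Markov partition, which the paper treats as a standard property.
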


Likewise, the following is immediate from the definitions and the related properties of Perron--Frobenius matrices.

\begin{proposition} \label{P:static char}
Let $\phi:S \to S$ be a pseudo-Anosov homeomorphism and let $\mathcal R$ be a Markov partition for $\phi$.  A rectangle is unmixed by $\phi$ if and
only if $\deg(R,\phi) = 1$ and $\codeg(R',\phi) = 1$ for the unique target rectangle $R'$ of $R$ for $\phi$.

For a positive integer $k > 0$, if $\deg(R,\phi^k) > 1$, then $\deg(R,\phi^j) > 1$ for all $j \geq k$.  Similarly, for
any $k > 0$, if $\codeg(R',\phi^k) > 1$ for some target rectangle $R'$ of $R$ by $\phi^k$, then the same is true for
some target rectangle of $R$ by $\phi^j$ for all $j \geq k$.
\end{proposition}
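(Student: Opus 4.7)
My plan is to prove the two halves of the proposition independently, since they concern different aspects of the structure: a characterization at a single time step versus a monotonicity statement across iterates.  Both arguments are bookkeeping; the key inputs are Proposition~\ref{P:graph/partition} (translating degrees and codegrees to path counts in the adjacency graph $\Gamma$) and the rigidity of Perron--Frobenius graphs (no sources or sinks).

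For the static characterization, I will handle both directions directly from the definitions.  In the forward direction, if $R$ is unmixed then $\phi|_R\colon R\to R'$ is a homeomorphism onto some $R'\in\mathcal R$; because $\phi$ is a homeomorphism of $S$ and the rectangles of $\mathcal R$ have pairwise disjoint interiors, one obtains $\phi^{-1}(R')=R$ as subsets of $S$.  Hence $R'$ is the unique target of $R$, and it is covered by $\phi(R)$ with multiplicity exactly $1$, yielding $\deg(R,\phi)=1=\codeg(R',\phi)$.  Conversely, assuming $\deg(R,\phi)=1$ with unique target $R'$, the Markov condition $\phi(\cup\partial_v R_i)\subseteq\cup\partial_v R_i$ together with the affine form of $\phi$ in preferred coordinates forces $\phi(R)$ to be a horizontal sub-rectangle spanning $R'$ from one vertical side to the other; the hypothesis $\codeg(R',\phi)=1$ then says no other rectangle contributes to $R'$, so by the covering property $S=\bigcup R_i$ the strip $\phi(R)$ must also fill $R'$ vertically.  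This forces $\phi(R)=R'$, and the affine structure makes $\phi|_R$ a homeomorphism.

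For the monotonicity statements, I will pass to $\Gamma$ via Proposition~\ref{P:graph/partition}, which identifies $\deg(R,\phi^m)$ with the number of length-$m$ directed paths emanating from $v_R$ and $\codeg(R',\phi^m)$ with the number of length-$m$ paths terminating at $v_{R'}$.  The crucial combinatorial fact is that the adjacency graph of a Perron--Frobenius matrix has neither a sink nor a source:  a zero row of $A$ would produce a zero row in every $A^m$, contradicting $A^k>0$, and similarly for columns.  In particular every directed path can be extended arbitrarily far in either direction.  Given two distinct length-$k$ paths $p_1\neq p_2$ emanating from $v_R$, I concatenate each with a single common forward extension of length $j-k$; the resulting length-$j$ paths remain distinct because they already disagree within the first $k$ edges, so $\deg(R,\phi^j)\geq 2$.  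For the codegree claim, given a target $R'$ of $R$ by $\phi^k$ with $\codeg(R',\phi^k)\geq 2$, I pick any forward path of length $j-k$ out of $v_{R'}$ ending at a vertex $v_{R''}$; then $R''$ is a target of $R$ by $\phi^j$, and appending this common tail to the two distinct length-$k$ paths into $v_{R'}$ yields two distinct length-$j$ paths into $v_{R''}$, so $\codeg(R'',\phi^j)\geq 2$.

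The main obstacle will be the reverse direction of the first assertion:  passing from the purely combinatorial information $\deg=\codeg=1$ to the geometric conclusion that $\phi|_R$ is a homeomorphism onto $R'$ requires combining the Markov axiom on vertical boundaries with the affine form of $\phi$ in preferred coordinates, and care must be taken so that $\phi(R)$ fills $R'$ in both the horizontal and vertical directions.  Once this geometric step is set up, the monotonicity half reduces to walking along paths in $\Gamma$ and uses nothing beyond the Perron--Frobenius absence of sinks and sources.
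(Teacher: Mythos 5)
Your proof is correct and supplies the details that the paper leaves to the reader: Proposition~\ref{P:static char} appears there with no proof, only the remark that it is ``immediate from the definitions and the related properties of Perron--Frobenius matrices,'' and the intended argument is precisely the one you give---translate $\deg$ and $\codeg$ into path counts in the adjacency graph via Proposition~\ref{P:graph/partition}, and use the fact that the adjacency graph of a Perron--Frobenius matrix has no sinks or sources so that paths extend forward and backward arbitrarily.

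One minor imprecision is worth flagging. In the $\deg$ monotonicity step you speak of concatenating $p_1$ and $p_2$ with ``a single common forward extension of length $j-k$,'' but $p_1$ and $p_2$ need not terminate at the same vertex of $\Gamma$, so in general one must choose a (possibly different) forward extension for each path. This costs nothing: such extensions exist because $\Gamma$ has no sinks, and the two extended length-$j$ paths remain distinct for exactly the reason you state---$p_1$ and $p_2$ already disagree among their first $k$ edges. (In the codegree case the two paths do share the terminal vertex $v_{R'}$, so there the common-tail phrasing is accurate.) Your geometric treatment of the static characterization is sound; the only place worth an extra sentence is the forward direction, to say explicitly why $\codeg(R',\phi)=1$ rather than merely $a_{RR'}=1$: since $\phi^{-1}(\intr R')=\intr R$ and distinct rectangles of $\mathcal R$ have disjoint interiors, no rectangle other than $R$ can meet $\phi^{-1}(\intr R')$, so $R$ is the only rectangle having $R'$ as a target.
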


Proposition \ref{P:static char} immediately implies the following.

\begin{corollary} \label{C:persistent stretch}
Suppose $\phi:S \to S$ is pseudo-Anosov and $\mathcal R$ is a Markov partition.  Then if $R \in \mathcal R$ is mixed by
$\phi^k$, then it is mixed by $\phi^j$ for all $j \geq k$.
\end{corollary}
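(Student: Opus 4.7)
The plan is to argue directly from Proposition \ref{P:static char}, which already does most of the work. The first sentence of that proposition, applied to $\phi^k$ in place of $\phi$ (and using the paper's earlier remark that any positive power of a Perron--Frobenius matrix is again Perron--Frobenius, so that $\mathcal R$ remains a Markov partition for every power of $\phi$), characterizes unmixed rectangles. The second and third sentences say the phenomena that obstruct being unmixed are persistent as the exponent grows. Combining these gives the corollary.

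More concretely, suppose $R \in \mathcal R$ is mixed by $\phi^k$. By the characterization of unmixed rectangles applied to the pseudo-Anosov $\phi^k$, at least one of the following holds:
\begin{enumerate}
\item[(a)] $\deg(R,\phi^k) > 1$, or
\item[(b)] there exists a target rectangle $R'$ of $R$ for $\phi^k$ with $\codeg(R',\phi^k) > 1$.
\end{enumerate}
In case (a), the second statement of Proposition \ref{P:static char} gives $\deg(R,\phi^j) > 1$ for all $j \geq k$, so $R$ cannot be unmixed by $\phi^j$. In case (b), the third statement of Proposition \ref{P:static char} gives that for every $j \geq k$ some target rectangle of $R$ for $\phi^j$ has codegree larger than $1$, again violating the unmixed criterion. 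In either case $R$ is mixed by $\phi^j$ for all $j \geq k$, which is precisely the conclusion.

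Since the corollary is advertised as ``immediately implied,'' I do not expect any genuine obstacle; the only subtlety worth flagging is to make explicit the logical form of the unmixed criterion: being unmixed requires \emph{both} $\deg(R,\phi)=1$ \emph{and} codegree $1$ at the unique target. Being mixed therefore means failing at least one of these, which is exactly the dichotomy (a)/(b) above. Once that dichotomy is stated, the two persistence assertions of Proposition \ref{P:static char} are tailored to handle the two cases, and there is nothing further to check.
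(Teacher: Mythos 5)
Your argument is correct and is exactly the unpacking the paper has in mind: the paper asserts the corollary as an immediate consequence of Proposition \ref{P:static char}, and your two-case analysis (degree $>1$ persists, or some target's codegree $>1$ persists) is precisely the intended reading. The one clarifying remark you make — that the negation of the unmixed criterion is the disjunction (a)/(b) — is the right thing to spell out, and the rest follows from the persistence clauses of Proposition \ref{P:static char} as you describe.
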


We can also translate Lemma \ref{L:matrix lemma} into the context of Markov partitions for small dilatation pseudo-Anosov homeomorphisms.

\begin{lemma}
\label{L:finite nonhomeo} There is an integer $C = C(P) > 0$, depending only on $P$, so that if $(\phi:S \to S) \in
\Psi_P$ and if $\mathcal R = \{ R_i \}$ is a small Markov partition for $\phi$, then the number of rectangles of
$\mathcal R$ that are mixed by $\phi$ is at most $C$.  Moreover, the sum of the degrees of the mixed rectangles is at most $C$, and the sum of the codegrees of all targets of mixed rectangles is at most $C$.
\end{lemma}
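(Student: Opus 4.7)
The plan is to apply the Ham--Song estimate Lemma \ref{L:matrix lemma} to the transition matrix $A = A(\phi, \mathcal R)$ in both its out-degree and in-degree forms, then translate the resulting combinatorial bounds back to the Markov partition via Proposition \ref{P:static char}. First, since $\mathcal R$ is small and $\phi \in \Psi_P$, setting $n = |\mathcal R|$ we have $n \leq 9|\chi(S)|$ and $\lambda(A) = \lambda(\phi) \leq P^{1/|\chi(S)|}$, giving $\lambda(A)^n \leq P^{n/|\chi(S)|} \leq P^9$. By Proposition \ref{P:graph/partition}, the out- and in-degrees of $v_R \in \Gamma_A$ are $\deg(R,\phi)$ and $\codeg(R,\phi)$ respectively, so Lemma \ref{L:matrix lemma} yields
$$ \sum_{R \in \mathcal R}(\deg(R,\phi) - 1) \leq P^9 - 1 \quad\text{and}\quad \sum_{R \in \mathcal R}(\codeg(R,\phi) - 1) \leq P^9 - 1. $$

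Set $S_{\text{out}} = \{R : \deg(R,\phi) \geq 2\}$ and $S_{\text{in}} = \{R : \codeg(R,\phi) \geq 2\}$. Each element of $S_{\text{out}}$ contributes at least $1$ to the first sum, so $|S_{\text{out}}| \leq P^9 - 1$ and hence $\sum_{R \in S_{\text{out}}} \deg(R,\phi) \leq 2(P^9 - 1)$; analogous bounds hold for $S_{\text{in}}$. By Proposition \ref{P:static char}, $R$ is mixed by $\phi$ precisely when either $R \in S_{\text{out}}$, or $\deg(R,\phi) = 1$ with unique target $R' \in S_{\text{in}}$. For each $R' \in S_{\text{in}}$, at most $\codeg(R',\phi)$ rectangles have $R'$ as a target, so the number of mixed rectangles of the second type is at most $\sum_{R' \in S_{\text{in}}} \codeg(R',\phi) \leq 2(P^9 - 1)$. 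Combining, the number of mixed rectangles is at most $3(P^9 - 1)$, and the sum of their degrees is at most $\sum_{R \in S_{\text{out}}} \deg(R,\phi) + 2(P^9 - 1) \leq 4(P^9 - 1)$.

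For the bound on codegrees of targets, any target $R'$ of a mixed rectangle either lies in $S_{\text{in}}$, in which case the total contribution summed over such $R'$ is at most $\sum_{R' \in S_{\text{in}}} \codeg(R',\phi) \leq 2(P^9 - 1)$, or else $\codeg(R',\phi) = 1$, in which case its unique incoming rectangle $R$ must belong to $S_{\text{out}}$ (otherwise $R$ would be unmixed). Such codegree-one targets of mixed rectangles are therefore injectively indexed by edges of $\Gamma_A$ emanating from $S_{\text{out}}$, of which there are at most $2(P^9 - 1)$. Setting $C$ to be the maximum of these explicit bounds finishes the proof.

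No step is genuinely difficult; the lemma is essentially a formal consequence of Lemma \ref{L:matrix lemma} once the definitions are unwound. The only subtlety worth flagging is the asymmetry of mixedness---a rectangle with $\deg(R,\phi) = 1$ can still be mixed because its unique target has high codegree---so Lemma \ref{L:matrix lemma} must be invoked in both outgoing and incoming forms and the two cases tracked separately.
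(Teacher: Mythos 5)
Your proof is correct and follows essentially the same route as the paper's: derive the bound $\lambda(A)^n\le P^9$ from smallness of $\mathcal R$, apply the Ham--Song inequality (Lemma \ref{L:matrix lemma}) to both the out-degree and in-degree sums, and then translate via Proposition \ref{P:static char}'s characterization of mixedness. If anything your bookkeeping in the second and third counts is slightly more careful than the paper's terse version, but the underlying argument is the same.
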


\begin{proof}

By the definition of a small Markov partition, we have $|\mathcal R| \leq 9|\chi(S)|$, and so
\[ P^{\frac{1}{|\chi(S)|}} \leq P^{\frac{9}{|\mathcal R|}}.\]
Then, by the definition of $\Psi_P$, we have $\lambda(\phi) \leq P^{\frac{1}{|\chi(S)|}} \leq P^{\frac{9}{|\mathcal
R|}}$.

Let $A$ be the transition matrix for the pair $(\phi,\mathcal R)$, and let $\Gamma$ be the corresponding adjacency
graph.  We denote by $v_R$ the vertex of $\Gamma$ associated to a rectangle $R \in \mathcal R$.  Since $\lambda(A) =
\lambda(\phi) \leq P^{\frac{9}{|\mathcal R|}}$, Lemma~\ref{L:matrix lemma} implies that
\begin{equation} \label{eq:bounding in and out}
 \sum_{v \in \Gamma^{(0)}} (\deg_{\textnormal{out}}(v) - 1) = \sum_{v \in \Gamma^{(0)}} (\deg_{\textnormal{in}}(v) - 1) \leq P^9
 \end{equation}
from which it follows that $\Gamma$ has at most $P^9$ vertices $v_R$ with $\deg(R,\phi) =
\deg_{\textnormal{out}}(v_R) > 1$ and at most $P^9$ vertices $v_R$ with  $\codeg(R,\phi) = \deg_{\textnormal{in}}(v_R)
> 1$.  That is, $\deg(R,\phi) = 1$ for all but at most $P^9$ rectangles, and, among the rectangles $R$ with
$\deg(R,\phi) = 1$, all but at most $P^9$ of these have $\codeg(R',\phi) = 1$ for their unique target rectangle $R'$.
Therefore, by Proposition \ref{P:static char} there are at most $2P^9$ mixed rectangles.

Finally, observe that for any $R \in \mathcal R$ we have
\[ \deg(R,\phi) = \deg_{\textnormal{out}}(v_R)= (\deg_{\textnormal{out}}(v_R) - 1) + 1 \leq P^9 + 1 \]
and similarly $\codeg(R,\phi) \leq P^9 + 1$.  Since there are at most $2P^9$ rectangles that are mixed, the sum of the
degrees of the mixed rectangles is at most $2 P^9 (P^9+1)$,  and similarly the sums of the codegrees of target
rectangles of mixed rectangles is at most $2 P^9 (P^9+1)$.

Setting $C = 2P^9(P^9 + 1)$ completes the proof.
\end{proof}

Given a rectangle $R$ of a Markov partition, we let $\ell(R)$ denote its \textit{length}, which is the length of either side of $\partial_h R$ with respect to the cone metric associated to $q$.  Similarly, we let $w(R)$ denote its \textit{width}, which is the length of either side of $\partial_v R$ with respect to $q$.

\begin{lemma} \label{L:bounded distortion}
Let $(\phi:S \to S) \in \Psi_P$ and let $\mathcal R$ be a small Markov partition for $\phi$. If $R$ and $R'$ are any two
rectangles of $\mathcal R$, then
\[ P^{-9}\ \leq\ \frac{\ell(R)}{\ell(R')} \ \leq\ P^9\quad \mbox{and}\quad  P^{-9}\ \leq\ \frac{w(R)}{w(R')}\ \leq\ P^9. \]
\end{lemma}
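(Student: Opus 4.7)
The plan is to extract the bound from the fact that the vectors of lengths $(\ell_i)$ and widths $(w_i)$ are, respectively, the right and left Perron--Frobenius eigenvectors of the transition matrix $A = A(\phi,\mathcal R)$. Recall that $\phi$ stretches horizontal lengths by $\lambda = \lambda(\phi)$ while contracting widths by the same factor. In preferred coordinates $\phi(R_i)$ has length $\lambda\ell_i$ and width $w_i/\lambda$; the Markov condition ensures that for each $R_j$, each of the $a_{ij}$ components of $\phi(R_i)\cap R_j$ is a horizontal substrip of $R_j$ of full length $\ell_j$ and width $w_i/\lambda$. Summing lengths across all $R_j$ that $\phi(R_i)$ crosses, and summing widths of all substrips inside a fixed $R_j$, gives the two eigenvector relations
\[
\lambda\,\ell_i \;=\; \sum_j a_{ij}\,\ell_j,\qquad
\lambda\,w_j \;=\; \sum_i a_{ij}\,w_i.
\]

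Next I would convert these equalities into one-step comparison inequalities along directed edges of the adjacency graph $\Gamma_A$. Since every $a_{ij} \geq 0$ and every $\ell_j, w_i > 0$, the first equation yields
\[
a_{ij} \geq 1 \;\Longrightarrow\; \ell_j \leq \lambda\,\ell_i,
\]
and symmetrically the second yields $w_i \leq \lambda\, w_j$ whenever $a_{ij}\geq 1$. Iterating along a directed path of combinatorial length $k$ from vertex $v_i$ to vertex $v_j$ in $\Gamma_A$ gives $\ell_j \leq \lambda^k \ell_i$, and along the same path $w_i \leq \lambda^k w_j$.

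Now I would invoke strong connectivity. Because $A$ is Perron--Frobenius, $\Gamma_A$ is strongly connected, so between any two vertices there is a directed path of length at most $|\mathcal R|-1$. Using the hypothesis that $\mathcal R$ is small (so $|\mathcal R|\leq 9|\chi(S)|$) together with $\phi\in\Psi_P$ (so $\lambda \leq P^{1/|\chi(S)|}$), I conclude
\[
\frac{\ell_j}{\ell_i} \;\leq\; \lambda^{|\mathcal R|-1} \;\leq\; \lambda^{9|\chi(S)|}
\;\leq\; \bigl(P^{1/|\chi(S)|}\bigr)^{9|\chi(S)|} \;=\; P^9,
\]
and the symmetric inequality $\ell_i/\ell_j\leq P^9$ follows from a path in the reverse direction. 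The identical argument with the second eigenvector relation gives $P^{-9}\leq w(R)/w(R')\leq P^9$.

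The only real content is the derivation of the eigenvector equations from the geometric definition of a Markov partition; everything else is linear-algebraic bookkeeping. The main thing to be careful about is the orientation convention: the length vector is a right eigenvector (a rectangle's length controls lengths of the rectangles it maps over) while the width vector is a left eigenvector (a rectangle's width is controlled by widths of the rectangles mapping over it), so the width inequality propagates along reversed edges of $\Gamma_A$, which is why it is essential that the strong connectivity of $\Gamma_A$ gives bounded-length paths in both directions between any pair of vertices.
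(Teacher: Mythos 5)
Your proposal is correct and takes essentially the same route as the paper. Both arguments rest on the same two facts: (a) strong connectivity of the adjacency graph $\Gamma_A$ gives a directed path of length at most $|\mathcal R|$ between any pair of vertices, and (b) along each directed edge, horizontal lengths shrink by at most a factor $\lambda$ while vertical widths grow by at most a factor $\lambda$, which combined with $|\mathcal R| \leq 9|\chi(S)|$ and $\lambda \leq P^{1/|\chi(S)|}$ yields the bound $P^9$.

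The only real difference is presentational: you derive the edge inequalities from the explicit Perron--Frobenius eigenvector relations $\lambda\ell_i = \sum_j a_{ij}\ell_j$ and $\lambda w_j = \sum_i a_{ij} w_i$, whereas the paper argues directly that $\ell(\phi^{|\mathcal R|}(R_i)) = \lambda^{|\mathcal R|}\ell(R_i) \geq \ell(\phi^k(R_i)) \geq \ell(R_j)$ for the first $k$ with $(A^k)_{ij} > 0$, and then handles widths by replacing $\phi$ with $\phi^{-1}$ instead of your observation that the width vector is a left eigenvector so the inequality propagates along reversed edges. These are two ways of saying the same thing. Your version makes the eigenvector structure explicit, which is illuminating but not logically necessary; the paper's version is marginally shorter. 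No gaps.
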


\begin{proof}

It suffices to show that $\ell(R)/\ell(R') \leq P^9$.  By replacing $\phi$ with $\phi^{-1}$ and switching the roles of
$R$ and $R'$, the other inequalities will follow.

Let $A$ be the transition matrix for $(\phi,\mathcal R)$ and let $\Gamma$ be its adjacency graph.   For some $k \leq
|\mathcal R|$ the $(i,j)$--entry of $A^k$ is positive, which means that $R_j$ is a target rectangle of $R_i$ for
$\phi^k$. That is, $\phi^k$ stretches $R_i$ over $R_j$. In particular
\[ \ell(\phi^{|\mathcal R|}(R_i)) \geq \ell(\phi^k(R_i)) \geq \ell(R_j)\]
and so it suffices to show that $\ell(\phi^{|\mathcal R|}(R_i)) \leq P^9\ell(R_i)$.  Indeed:
\[ \ell(\phi^{|\mathcal R|}(R_i)) \ = \ \lambda(\phi)^{|\mathcal R|} \ell(R_i) \ \leq \ P^{\frac{|\mathcal R|}{|\chi(S)|}}\ell(R_i) \ \leq \ P^9\ell(R_i). \]
The equality uses the definition of a Markov partition, the first inequality follows from the definition of $\Psi_P$,
and the second inequality comes from the definition of a small Markov partition.
\end{proof}


\section{Cell structures on $S$} \label{S:cell structures}

Let $\phi$ be a pseudo-Anosov homeomorphism of $S$, and let $\mathcal R = \{ R_i \}$ be a Markov partition for $\phi$.
The Markov partition determines a cell structure on $S$, which we denote by $X = X(\mathcal R)$, as follows.  The
vertices of $X$ are the corners of the rectangles $R_i$ together with the marked points and the singular points of the
stable (and unstable) foliation.  The vertices divide the boundary of each rectangle of $\mathcal R$ into a union of arcs, which we
declare to be the $1$--cells of our complex.  Finally, the 2--cells are simply the rectangles themselves.  We refer to
$1$--cells as either \textit{horizontal} or \textit{vertical} according to which foliation they lie in.

\medskip

For the next lemma, recall the notion of $K$--bounded complexity, which is defined in Section~\ref{S:finiteness}.

\begin{lemma}
\label{L:bounded complexity 1} There is an integer $D_1=D_1(P) > 0$ so that for any $\phi \in \Psi_P$, and for any
small Markov partition $\mathcal R$ for $\phi$, each of the cells of $X(\mathcal R)$ has $D_1$--bounded
complexity
\end{lemma}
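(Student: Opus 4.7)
The plan is to check the $D_1$-bounded complexity condition separately in each dimension. For any $D_1 \geq 2$, $0$-cells satisfy it trivially and $1$-cells do so automatically (their attaching domain $\partial \mathbb D^1$ consists of two points). So the real task is, for each rectangle $R \in \mathcal R$, to bound the number of $1$-cells of $X(\mathcal R)$ appearing on $\partial R$ in terms of $P$ alone, and to confirm that the attaching map is a homeomorphism on each subarc. The latter is immediate from the fact that the characteristic map $\rho: [0,1]^2 \to S$ of a rectangle is affine and injective on the interior.

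Fix $R$ and one of its horizontal sides $h$. The $0$-cells of $X(\mathcal R)$ lying on $h$ are: the two endpoints of $h$ (corners of $R$); any marked points or foliation singularities in the interior of $h$; and corners of other rectangles $R'$ in the interior of $h$. Standard constructions of Markov partitions for pseudo-Anosov homeomorphisms (e.g.\ those of Bestvina--Handel) place all marked and singular points at rectangle corners, so the problem reduces to counting corners of $R' \neq R$ lying in $\intr(h)$. If $p \in \intr(h)$ is such a corner, then horizontal and vertical sides of $R'$ meet at $p$, and since interiors of distinct rectangles are disjoint, $R'$ lies entirely on the side of $h$ opposite $R$. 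Enumerate the rectangles on that side with horizontal sides meeting $h$ as $R'_1,\ldots,R'_K$, and let $I_j \subseteq h$ be the overlap of $R'_j$'s horizontal side with $h$. The $I_j$'s partition $h$, so $\sum_j |I_j| = \ell(R)$, and the interior $0$-cells on $h$ are precisely the $K-1$ corners shared between consecutive $R'_j$ and $R'_{j+1}$.

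Now I apply Lemma~\ref{L:bounded distortion}. Each $R'_j$ either has its entire horizontal side contained in $h$---in which case $|I_j| = \ell(R'_j) \geq P^{-9} \ell(R)$---or else it straddles an endpoint of $h$. At most one rectangle can straddle each endpoint, so at most two of the $R'_j$'s straddle. The remaining $K-2$ non-straddlers satisfy $(K-2) P^{-9} \ell(R) \leq \ell(R)$, forcing $K \leq P^9 + 2$. Hence $h$ carries at most $P^9 + 1$ interior $0$-cells, for a total of $P^9 + 3$ zero-cells on $h$, and is subdivided into at most $P^9 + 2$ one-cells.

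An identical argument applied to each of the remaining three sides of $R$ (using widths in place of lengths for the two vertical sides, again invoking Lemma~\ref{L:bounded distortion}) yields at most $4(P^9 + 2)$ one-cells on $\partial R$. Taking $D_1 = 4(P^9 + 2)$ completes the proof. The main conceptual point is the observation that non-straddling adjacent rectangles contribute length at least $P^{-9}\ell(R)$ to $h$, uniformly bounding the number of transitions along each side in terms of $P$ alone. The potential obstacle of marked or singular points in the interiors of sides is avoided by the standard choice of Markov partition, which places all such points at corners; if one worked with an arbitrary Markov partition, a further argument would be needed to bound these contributions per rectangle by a function of $P$.
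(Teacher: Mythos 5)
Your counting argument is essentially the paper's: both proofs reduce to bounding the number of $1$-cells along each side of a rectangle $R$, and both use Lemma~\ref{L:bounded distortion} to show that adjacent rectangles each occupy a uniformly bounded fraction (at least $P^{-9}$) of the side, yielding a bound of roughly $P^9 + O(1)$ one-cells per side. Your bookkeeping via "straddlers" versus "non-straddlers" is a minor repackaging of the paper's case split into whole-edge, corner, and singular $1$-cells, and gives a comparable constant.

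The genuine gap is exactly the one you flag at the end: you assume that marked points and foliation singularities sit at corners of the rectangles, appealing to "standard constructions." But the lemma is quantified over \emph{arbitrary} small Markov partitions for $\phi$, and nothing in the definition of a Markov partition in Section~\ref{S:markov partition} forces these points to be corners --- condition~(5) only requires each marked point to lie on the boundary of some rectangle. The paper closes this by proving directly that each of the four sides of $R$ contains at most one marked point or singularity: if a side contained two, the subarc of $\partial R$ joining them would be a segment of a leaf of the stable (or unstable) foliation with endpoints in the finite $\phi$-invariant set of marked and singular points; some power of $\phi$ would then carry this segment to itself, contradicting that $\phi$ strictly stretches every horizontal leaf and strictly compresses every vertical one. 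Once that is in hand, one simply adds at most two extra $1$-cells per side to your count, recovering a bound of the form $D_1 = 4P^9 + O(1)$. So your proof is not wrong in spirit, but it relies on an unjustified normalization of the Markov partition; the missing ingredient is the no-saddle-connection argument above.
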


\begin{proof}
Let $R \in \mathcal R$.  The first observation is that each of the four sides of $R$ contains at most one marked point
or one singularity of the stable foliation for $\phi$.  Indeed, if a side of $\partial R$ were to contain more than one
singularity or marked point, then the vertical or horizontal interval of $\partial R$ connecting these two points would
be a leaf of either the stable or unstable foliation.  This is a contradiction since some power of $\phi$ would have to
take this segment to itself, but $\phi$ nontrivially stretches or shrinks all leaves of the given foliation.

Now, for each $1$--cell $e$ in the boundary of $R$, at least one of the following holds:
\begin{itemize}
\item[\emph{(i)}] $e$ is an entire edge of the boundary of a rectangle, or
\item[\emph{(ii)}]  $e$ has a corner of $R$ as an endpoint, or
\item[\emph{(iii)}] $e$ has a marked point or singularity of the stable foliation for $\phi$ as an endpoint.
\end{itemize}
Along each side of $R$ there are at most two 1--cells of type \emph{(ii)} and, since there is at most one singularity
or marked point, there are at most two 1--cells of type \emph{(iii)}. By Lemma~\ref{L:bounded distortion}, the length
of any rectangle of $\mathcal R$ is at least $P^{-9}\ell(R)$ and the width is at least $P^{-9}w(R)$.  Thus, there are at most $P^9$ $1$--cells of type \emph{(i)} along each side of $R$. Summarizing,
there are at most $(P^9+4)$ $1$--cells along each side of $R$, and hence at most $(4P^9+16)$ $1$--cells in $\partial
R$. Thus, the constant $D_1=4P^9+16$ satisfies the conclusion of the lemma (this is trivial for $0$-- and $1$--cells).
\end{proof}

For any rectangle $R \in \mathcal R$, its image $\phi(R)$ is a union of subrectangles of rectangles in $\mathcal R$.  Thus, if we
define
\[ \phi(\mathcal R) = \{ \phi(R) \, | \, R \in \mathcal R \}\]
then $\mathcal R \cup \phi(\mathcal R)$ determines a rectangle decomposition of
$S$, with all rectangles being subrectangles of rectangles in $\mathcal R$. We define another cell structure $Y =
Y(\phi,\mathcal R)$ on $S$, in exactly the same way as above, using this new rectangle decomposition; see Figure
\ref{F:markovsubdivide}.

\begin{figure}[htb]
\centerline{}\centerline{}
\begin{center}
\ \psfig{file=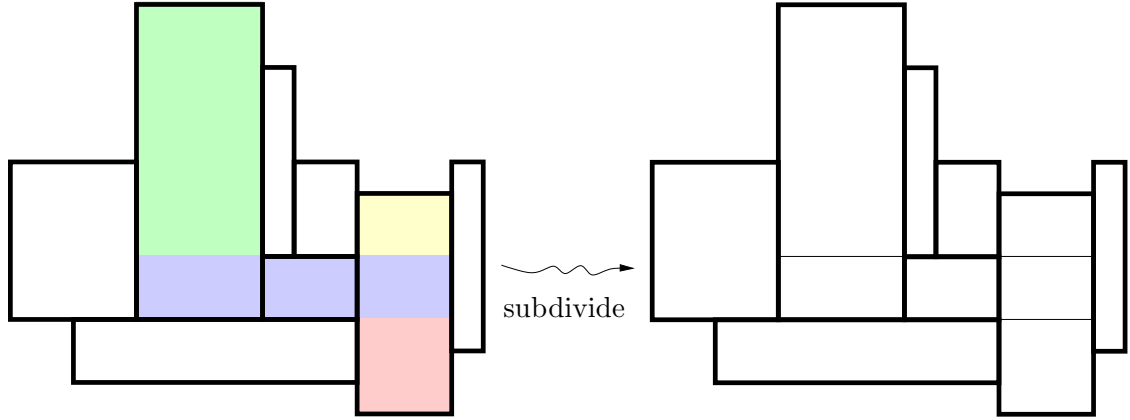,height=2.2truein} \caption{The image of the mixed rectangles are colored (see Figure \ref{F:markovpic1}) and they determine the subdivision of the original rectangles into subrectangles.}  \label{F:markovsubdivide}
\end{center}
 \setlength{\unitlength}{1in}
  \begin{picture}(0,0)(0,0)
   \put(2.7,1.4){subdivide}
  \end{picture}
\end{figure}

We can briefly explain the reason for introducing the cell structure $Y$.  In Section~\ref{S:3-mfd}, we will construct a cell structure on the mapping torus for $\phi$.  Each 3--cell will come from crossing a rectangle of $\mathcal R$ with the unit interval.  The gluing map for the one face of this ``box'' is given by $\phi$, and hence would not be cellular in the $X$--structure.

The following gives bounds on the complexity of the cells of $Y$ and the ``relative complexity'' of $X$ with respect to $Y$.

\begin{lemma} \label{L:bounded complexity 2}
There is an integer $D_2=D_2(P)$ so that if $\phi \in \Psi_P$ and $\mathcal R=\{R_i\}$ is a small Markov partition
for $\phi$, then the following hold.
\begin{enumerate}
\item \label{part1} Each $1$--cell of $X= X(\mathcal R)$ is a union of at most $D_2$ $1$--cells of $Y$.
\item \label{part2} Each $2$--cell of $X$ is a union of at most $D_2$ $2$--cells of $Y$.
\item \label{part3} For each $2$--cell $e$ of $X$, $\phi(e)$ is a union of at most $D_2$ $2$--cells of $Y$.
\item \label{part4} For each $1$--cell $e$ of $X$, $\phi(e)$ is a union of at most $D_2$ $1$--cells of $Y$.
\item \label{part5} Each of the cells of $Y(\phi,\mathcal R)$ has $D_2$--bounded complexity.\end{enumerate}
\end{lemma}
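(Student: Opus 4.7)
The plan rests on the Markov property: by property 3 of the Markov partition, each $\phi(R')$ crosses each $R \in \mathcal R$ as horizontal sub-rectangles (strips) spanning $R$ from one vertical side to the other. Consequently, $Y$ refines $X$ inside each rectangle $R$ solely by horizontal cuts, with no interior vertical subdivision, and the horizontal strips partitioning $R$ (the 2-cells of $Y$ inside $R$) number exactly $\codeg(R,\phi)$. Dually, for a 2-cell $R$ of $X$, the image $\phi(R)$ is the union of exactly $\deg(R,\phi)$ strips, each a 2-cell of $Y$. By the proof of Lemma \ref{L:finite nonhomeo}, both $\codeg(R,\phi)$ and $\deg(R,\phi)$ are at most $P^9+1$ for every $R$, which establishes parts (\ref{part2}) and (\ref{part3}).

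For part (\ref{part1}), a horizontal 1-cell of $X$ is not further subdivided in $Y$: all new $Y$-vertices are strip corners, and these lie on $\bigcup_i \partial_v R_i$ rather than in the interior of any horizontal 1-cell. A vertical 1-cell of $X$ shared between $R$ and an adjacent rectangle $R^*$ is subdivided by strip corners from both sides, contributing at most $(\codeg(R,\phi)-1)+(\codeg(R^*,\phi)-1) \leq 2P^9$ new vertices and hence at most $2P^9+1$ 1-cells of $Y$. For part (\ref{part5}), each 2-cell of $Y$ is a strip $S \subseteq R$; using the argument from the proof of Lemma \ref{L:bounded complexity 1} that each side of $\partial R$ contains at most one marked or singular point, each of the four sides of $\partial S$ decomposes into at most two 1-cells of $Y$, so $\partial S$ carries at most $8$ 1-cells.

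Part (\ref{part4}) is the most delicate step. Let $e \subseteq \partial R$ be a 1-cell of $X$. The crucial observation is that $\phi$ sends $X$-vertices to $Y$-vertices: corners of $R$ map to corners of strips of $\phi(R)$, while marked and singular points are permuted among themselves. Combined with $\phi(e) \subseteq \phi(\partial R) \subseteq Y^{(1)}$---the inclusion into the 1-skeleton follows from $\phi(\bigcup_i \partial_v R_i) \subseteq \bigcup_i \partial_v R_i$ together with the fact that $\phi(\partial_h R)$ lies on the horizontal 1-skeleton (horizontal sides of rectangles plus tops and bottoms of strips)---this implies $\phi(e)$ is a path in $Y^{(1)}$ between two $Y$-vertices, hence a union of full 1-cells of $Y$. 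To count them, note that $\phi(\partial R) \subseteq \partial \phi(R)$ is contained in the boundary of the union of $\deg(R,\phi) \leq P^9+1$ strips; by part (\ref{part5}) each strip contributes at most $8$ boundary 1-cells of $Y$, so $\phi(e)$ contains at most $8(P^9+1)$ 1-cells of $Y$. Setting $D_2 = 8(P^9+1)$ handles all five parts. The main obstacle is verifying in part (\ref{part4}) that $\phi(e)$ is genuinely a union of full 1-cells of $Y$ (not merely a partial sub-arc), which hinges on the observation that $\phi$ carries $X$-vertices to $Y$-vertices.
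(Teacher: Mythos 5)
Your parts (1), (2), and (3) are correct and track the paper's argument: $Y$ is obtained from $X$ by adding $\codeg(R,\phi)-1$ full-width horizontal cuts in each rectangle $R$, and the degree and codegree bounds from Lemma~\ref{L:finite nonhomeo} control the count of strips and subdividing vertices on $\partial_v R$.

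Part (5), however, has a genuine gap. You assert that each of the four sides of a strip $S\subseteq R$ decomposes into at most two $1$-cells of $Y$, citing the marked/singular point observation from the proof of Lemma~\ref{L:bounded complexity 1}. But that observation controls only the type~(iii) $1$-cells in the terminology of that proof. It does not control the type~(i) $1$-cells, namely the entire edges of rectangles adjacent to $R$, of which a single side of $R$ can carry on the order of $P^9$ (this is exactly why the constant $D_1$ in Lemma~\ref{L:bounded complexity 1} is $4P^9+16$ rather than $8$). A vertical side of $S$ is a subarc of $\partial_v R$; it is subdivided by corners of rectangles adjacent to $R$ along that side, and is further subdivided in $Y$ at points where the horizontal sides $\partial_h\phi(R_j)$ meet $\partial_v R$. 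So the bound ``$8$'' is not valid; the correct order of magnitude is roughly $D_1\cdot C$, which is what the paper obtains by combining the $D_1$ bound on $X$-cells in $\partial R$ with the part (1) bound on how each such $X$-cell is subdivided in $Y$.

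This error propagates directly to your part (4), which relies on the ``at most $8$ boundary $1$-cells per strip'' count. Your structural observations in part (4)---that $\phi$ carries $X$-vertices to $Y$-vertices, that $\phi(\partial R)\subset Y^{(1)}$, and hence that $\phi(e)$ is an embedded arc in $Y^{(1)}$ between $Y$-vertices and therefore a union of \emph{full} $1$-cells of $Y$---are correct and make explicit a point the paper leaves implicit, which is a virtue of your write-up. But the final constant $D_2 = 8(P^9+1)$ is too small. The paper instead sets $D_2 = 2CD_1$, which absorbs the subdivision of $\partial R$ by adjacent rectangles; you need some constant of that size.
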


\begin{proof}

By the definition of a Markov partition, the cell structure $Y=Y(\phi,\mathcal R)$ is obtained from the cell structure
$X=X(\mathcal R)$ by subdividing each rectangle of $X$ into subrectangles along horizontal arcs.  Given $R \in
\mathcal R$, the number of rectangles in the subdivision is precisely $\codeg(R)$, and is thus bounded by $C = C(P)$
according to Lemma \ref{L:finite nonhomeo}. Therefore, part \ref{part2} holds for $D_2 \geq C$.
Similarly, given any rectangle $R$, $\phi(R)$ is a union of $\deg(R)\leq C$ $2$--cells of
$Y$, and so part \ref{part3} holds for $D_2 \geq C$.

The subdivision of $R$ is obtained by first subdividing the vertical sides of $R$ by adding at most $2(\codeg(R) -1)$
new $0$--cells, then adding $\codeg(R) - 1$ horizontal $1$--cells from the left side of $R$ to the right. Therefore,
since each vertical $1$--cell of $X$ lies in exactly two rectangles $R$ and $R'$, it is subdivided into at most
$(\codeg(R) + \codeg(R')) \leq 2C$ $1$--cells of $Y$ by Lemma \ref{L:finite nonhomeo}. Every horizontal $1$--cell of
$X$ is still a horizontal $1$--cell of $Y$, so part \ref{part1} holds for $D_2 \geq 2C$.

For part \ref{part5}, observe that any $2$--cell $e$ of $Y$ is a subrectangle of some rectangle $R \in \mathcal R$.
Since each rectangle $R$ is a $2$--cell of $X$, it has at most $D_1$ $1$--cells of $X$ in its boundary.  By the
previous paragraph, each of these $1$--cells is subdivided into at most $2C$ $1$--cells of $Y$.  Therefore, there are
at most $2CD_1$ $1$--cells of $Y$ in the boundary of $R$, and thus no more than this number in the boundary of $e$.
Part \ref{part5} holds for any $D_2 \geq 2CD_1$ (this is trivial for $0$-- and $1$--cells).

For part \ref{part4}, observe that if $e$ is a vertical $1$--cell of $X$, then $\phi(e)$ is contained in the vertical boundary
of some rectangle $R \in \mathcal R$.  As mentioned in the previous paragraph, the boundary of any rectangle has at most $2CD_1$ $1$--cells of $Y$ in its boundary, so $\phi(e)$ is a union of at most $2CD_1$ $1$--cells of $Y$.
If $e$ is a horizontal $1$--cell, then $e$ is contained in the horizontal boundary of some rectangle $R \in
\mathcal R$, and so $\phi(e)$ is contained in the union of at most $\deg(R) \leq C$ horizontal boundaries of rectangles
of the subdivision.  As already mentioned, the horizontal boundary of each of these rectangles is either one of the
added $1$--cells, or else a horizontal boundary of some rectangle of $\mathcal R$, which contains at most $D_1$
$1$--cells of $X$ (which are also the $1$--cells of $Y$ as they are contained in the horizontal edges of a rectangle).
Therefore, $\phi(e)$ contains at most $C D_1$ $1$--cells of $Y$.  Therefore, part \ref{part4} holds for any $D_2 \geq
2CD_1$.

The lemma now follows by setting $D_2 = 2CD_1$.
\end{proof}


\section{Equivalence relations on rectangles} \label{S:equiv rel}

Given a pseudo-Anosov $(\phi:S \to S) \in \Psi_P$, let $\mathcal R$ be a small Markov partition for $\phi$.  In this
section we will use this data to construct a quotient space $\pi:S \to T$, and a cell structure $W$ on $T$ for which $\pi$ is
cellular with respect to $Y$. Moreover, we will prove that $W$ has uniformly bounded complexity.

The quotient $T$ will be obtained by gluing together rectangles of $\mathcal R$ via a certain equivalence relation.
This relation has the property that if a rectangle $R$ is equivalent to $R'$, then there is a (unique) power
$\phi^{\beta(R,R')}$ that takes $R$ homeomorphically onto the rectangle $R'$.  Moreover, $\phi^{\beta(R,R')}$ will be
cellular with respect to $Y$--cell structures on $R$ and $R'$, respectively.

This equivalence relation is most easily constructed from simpler equivalence relations.  Our approach will be to
define a first approximation to the equivalence relation we are searching for, and then refine it (twice) to achieve
the equivalence relation with the required properties.  Along the way we verify other properties that will be needed later.

\subsection{The first approximation: $h$--equivalence}

Define an equivalence relation $\stackrel{h}{\sim}$ on $\mathcal R$ by declaring
\[ R \stackrel{h}{\sim} R' \]
if there exists $\beta \in \mathbb Z$ so that $\phi^\beta$ takes $R$ homeomorphically onto $R'$. The `h' stands for
``homeomorphically''. As no rectangle can be taken homeomorphically to itself by a nontrivial power of $\phi$, it
follows that if $R \stackrel{h}{\sim} R'$ then there exists a unique integer $\beta(R,R')$ for which
$\phi^{\beta(R,R')}(R) = R'$.

\begin{proposition} \label{P:relation 1 prop}
Let $\phi \in \Psi_P$, and let $\mathcal R$ be a small Markov partition for $\phi$.  The $h$--equivalence classes in $\mathcal R$ have the form
\[ \{\phi^j(R)\}_{j = 0}^k\]
where each $k = k(R)$ is an integer.  There is a constant $E_h = E_h(P)$ so that there are at most $E_h$
$h$--equivalence classes.
\end{proposition}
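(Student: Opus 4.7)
My plan is to establish the proposition in two parts: first, the structural claim that every $h$--equivalence class has the form $\{\phi^j(R)\}_{j=0}^k$, and then the quantitative bound on the number of such classes.

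For the structural claim, I would first prove the key lemma: if $\phi^\beta(R) = R'$ homeomorphically with $R, R' \in \mathcal{R}$ and $\beta \geq 0$, then $\phi^j(R) \in \mathcal{R}$ for every $0 \leq j \leq \beta$. Granting this, the set of integers $\beta$ with $\phi^\beta(R) \in \mathcal{R}$ will be an interval $[a,b] \subset \mathbb{Z}$ containing $0$, finite in both directions because no nontrivial power of $\phi$ can take a rectangle to itself (preferred coordinates strictly stretch lengths by powers of $\lambda$). Re-indexing with $R_0 = \phi^a(R)$ will then produce the desired form.

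To prove the key lemma I would argue as follows. Since $R$ is unmixed by $\phi^\beta$, Proposition \ref{P:static char} gives $\deg(R, \phi^\beta) = 1$ and $\codeg(R', \phi^\beta) = 1$. Applying the Proposition to both $\phi$ and $\phi^{-1}$ (the latter viewed with horizontal and vertical foliations swapped) will yield $\deg(R, \phi^j) = 1$ and $\codeg(R', \phi^{\beta-j}) = 1$ for every $0 \leq j \leq \beta$. Setting $R_j = \phi^j(R)$, the first condition forces $R_j$ to lie inside a unique rectangle $\tilde{R}_j \in \mathcal{R}$ as a full-length horizontal strip. Viewed under $\phi^{-1}$, the second condition forces $R_j = \phi^{-(\beta-j)}(R')$ to lie inside a unique rectangle $\tilde{R}'_j \in \mathcal{R}$ as a full-height vertical strip. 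Since $R_j$ has nonempty interior and interiors of distinct rectangles of $\mathcal{R}$ are pairwise disjoint, $\tilde{R}_j = \tilde{R}'_j$, and $R_j$ will span this common rectangle in both directions, giving $R_j = \tilde{R}_j \in \mathcal{R}$.

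For the bound I would call $R$ a \emph{beginning} if $\phi^{-1}(R) \notin \mathcal{R}$. By the structural claim, each equivalence class will contain exactly one beginning (its $j=0$ representative), reducing the count to bounding beginnings. A beginning will satisfy either (a) $\codeg(R, \phi) > 1$, or (b) $\codeg(R, \phi) = 1$ with its unique predecessor $R_0$ satisfying $\deg(R_0, \phi) > 1$. The estimates $\sum_v (\deg_{\mathrm{out}}(v) - 1) = \sum_v (\deg_{\mathrm{in}}(v) - 1) \leq P^9$ obtained in the proof of Lemma \ref{L:finite nonhomeo} bound case (a) by $P^9$ and case (b) by $\sum_{\deg > 1} \deg(R_0,\phi) \leq 2P^9$, so $E_h(P) = 3P^9$ suffices.

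The main obstacle will be the key lemma in the structural part: ensuring that the intermediate images $\phi^j(R)$ are full rectangles of $\mathcal{R}$ rather than merely subrectangles. The forward degree information alone yields only horizontal spanning; combining it with the dual codegree information via the $\phi^{-1}$ perspective, together with the disjoint-interior property of the partition, is what makes the two kinds of spanning fit together into a full rectangle.
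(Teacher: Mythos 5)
Your proof is correct and follows essentially the same approach as the paper: both deduce the interval structure of $h$--equivalence classes from the monotonicity of degree/codegree (Proposition \ref{P:static char}, Corollary \ref{C:persistent stretch}), and both bound the number of classes by counting a distinguished endpoint of each class using the estimates from the proof of Lemma \ref{L:finite nonhomeo}. The only differences are cosmetic---the paper counts terminal rectangles where you count initial ones, and it simply cites Corollary \ref{C:persistent stretch} where you rederive the same fact via the horizontal/vertical strip argument (note also that your case (b) is actually vacuous under the paper's definition of $\codeg$, since $\codeg(R,\phi)=1$ already forces the unique predecessor to have degree one, though this redundancy does not affect the bound).
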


\begin{proof}

We first observe that $R \stackrel{h}{\sim} R'$ if and only if $R$ is unmixed for $\phi^j$ and $\phi^j(R) = R'$.   Proposition \ref{P:static char} then
implies that $R \stackrel{h}{\sim} R'$ if and only if $\deg(R,\phi^j)=1$,  $R'$ is the
unique target rectangle of $R$ and $\codeg(R',\phi^j)=1$. Corollary \ref{C:persistent stretch} now implies that the $h$--equivalence classes have the required form.

To prove the second statement of the proposition, we observe that, according to the description of the $h$--equivalence relation provided in the previous paragraph, the last rectangle $R$ in any given equivalence class (which is well-defined by the first part of the proposition) satisfies one of the following two conditions:
\begin{itemize}
\item[\emph{(i)}] $\deg(R,\phi) > 1$.
\item[\emph{(ii)}] $\deg(R,\phi) = 1$ and $\codeg(R',\phi) > 1$, where $R'$ is the unique target rectangle of $R$.
\end{itemize}
The number of rectangles $R$ of type \emph{(i)} is bounded above by the constant $C=C(P)$ from Lemma~\ref{L:finite nonhomeo}.
The number of type \emph{(ii)} is bounded above by the sum of the codegrees of rectangles with codegree greater than 1.  Again, by Lemma~\ref{L:finite nonhomeo}, this is bounded by $C$.  Thus, we may take $E_h$ to be $2C$.
\end{proof}

Given $(\phi:S \to S) \in \Psi_P$ and $\mathcal R$ a small Markov partition for $\phi$, we index the rectangles of $\mathcal R$
as $\{R_i\}_{i = 1}^n$ so that
\begin{enumerate}
\item The $h$--equivalence classes all have the form $\{R_i,R_{i+1},\dots,R_{i+k}\}$.
\item If $\{R_i,R_{i+1},\ldots,R_{i+k}\}$ is an equivalence class, then $R_{i+j} = \phi^j(R_i)$.
\end{enumerate}
That this is possible follows from Proposition \ref{P:relation 1 prop}.

\subsection{The second approximation: $N$--equivalence}

Say that two rectangles $R, R' \in \mathcal R$ are \textit{adjacent} if $R \cap R'$ contains at least one point that is not a singular
point or a marked point. Observe that, by definition, a rectangle is adjacent to itself.  We let $N_1(R) \subset
\mathcal R$ denote the set of rectangles adjacent to $R$.  We think of $N_1(R)$ as the ``1--neighborhood'' of $R$.

\begin{lemma} \label{L:rect in N1}
There exists a constant $D_3 = D_3(P)$ so that if $(\phi:S \to S) \in \Psi_P$ and $\mathcal R$ is a small Markov
partition for $\phi$, then the number of rectangles in $N_1(R)$ is at most $D_3$ for any $R \in \mathcal R$.
\end{lemma}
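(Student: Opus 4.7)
My plan is to bound $|N_1(R)|$ by analyzing how adjacency can occur combinatorially along $\partial R$ in the cell structure $X = X(\mathcal R)$ from Section~\ref{S:cell structures}. The whole argument will rest on the previously proved bound on the complexity of $\partial R$ as a subcomplex of $X$, together with two local observations: a $1$--cell has at most one rectangle on its opposite side, and a non-singular non-marked $0$--cell has at most four rectangles meeting it.

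First I would invoke Lemma~\ref{L:bounded complexity 1}, which tells us that the $2$--cell $R$ of $X$ has $D_1$--bounded complexity for a constant $D_1 = D_1(P)$; in particular $\partial R$ is a circle subdivided into at most $D_1$ $1$--cells and at most $D_1$ $0$--cells of $X$.

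Next I would handle rectangles $R' \neq R$ that share a $1$--cell $e$ with $R$. By the definition of $X$, the interior of $e$ contains no corner of any rectangle of $\mathcal R$ and no singular or marked point. Hence any rectangle $R'$ whose interior accumulates on the side of $e$ opposite $R$ must have $e$ contained in a single side of $R'$ (otherwise a corner of $R'$ would land in the interior of $e$, contradicting the construction of $X$). The Markov partition property that interiors are pairwise disjoint then forces $R'$ to be unique, so at most $D_1$ rectangles other than $R$ are adjacent to $R$ via a $1$--cell.

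Then I would handle rectangles $R'$ that meet $R$ only at isolated $0$--cells: by the definition of adjacency at least one such shared $0$--cell $v$ must be non-singular and non-marked. At such a $v$ the cone angle is exactly $2\pi$ (cone angles have the form $k\pi$ with $k \geq 2$ at unmarked points and $k \geq 3$ at singular points, forcing $k=2$ here), so a neighborhood of $v$ is Euclidean and the horizontal and vertical foliations cut this neighborhood into $4$ quadrants. By disjointness of interiors, each quadrant is contained in a unique rectangle of $\mathcal R$, so at most $4$ rectangles of $\mathcal R$ contain $v$, contributing at most $3$ rectangles to $N_1(R)$ other than $R$. Summing over the at most $D_1$ $0$--cells of $\partial R$ yields at most $3 D_1$ such rectangles.

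Combining, $|N_1(R)| \leq 1 + D_1 + 3 D_1 = 4D_1 + 1$, so $D_3 := 4D_1 + 1$ depends only on $P$. The only subtle point, and what I would spend the most care on, is the uniqueness claim in the second step, namely verifying that no ``exotic'' rectangle can enter a $1$--cell $e$ without producing a corner in its interior; but this is immediate from how the $0$--cells of $X$ were defined to include the corners of every $R_i \in \mathcal R$.
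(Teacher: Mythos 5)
Your proof is correct and follows essentially the same strategy as the paper: bound $|N_1(R)|$ by the combinatorics of $\partial R$ in the $X$--cell structure from Lemma~\ref{L:bounded complexity 1}, separately treating adjacency along a $1$--cell (one new rectangle per $1$--cell) and adjacency only at a $0$--cell. The paper obtains the slightly tighter bound $D_2 + 5$ by noting that a rectangle meeting $R$ only in an isolated vertex must do so at one of the four \emph{corners} of $R$ (if the shared $0$--cell lies in the interior of a side of $R$, then in your quadrant picture $R$ occupies a half-disc and any other rectangle containing that vertex necessarily also shares an arc of $\partial R$), whereas you sum over all $D_1$ vertices of $\partial R$ and get $4D_1+1$; this only changes the constant, not the validity.
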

\begin{proof}
The number of rectangles in $N_1(R)$ is at most the number of $1$--cells of $X$ in the boundary of $R$ plus $5$.  This
is because each $1$--cell contributes at most one new adjacent rectangle to $N_1(R)$, each corner vertex of $R$ contributes at most one more rectangle, and $R$ itself contributes $1$.  From Lemma
\ref{L:bounded complexity 2} it follows that we can take $D_3 = D_2 + 5$.
\end{proof}

Now we define a refinement of $h$--equivalence, called $N$--equivalence,  by declaring
\[ R \stackrel{N}{\sim} R'\]
if $\phi^{\beta(R,R')}$ does not mix $R''$ for any $R'' \in N_1(R)$ (the `N' stands for 1--neighborhood).  That is, $\phi^{\beta(R,R')}$
maps the entire 1--neighborhood $N_1(R)$ to the 1--neighborhood of $R'$, taking each rectangle homeomorphically onto another rectangle.
We leave it to the reader to check that $N$--equivalence is a refinement of $h$--equivalence.

Figure \ref{F:equivalence} shows the local picture of the surface after applying $\phi^j$, for $j =0,1,2,3$.  We have
labeled a few of the rectangles:
\[ R_i,\ \ R_{i+1} = \phi(R_i), \ \ R_{i+2} = \phi^2(R_i), \ \ R_{i+3} = \phi^3(R_i) \ \ \mbox{ and }\]
\[ R_j,\ \ R_{j+1} = \phi(R_j),\ \  R_{j+2} = \phi^2(R_j),\ \  R_{j+3} \supsetneq \phi^3(R_j).\]
The rectangles are related as follows:
\[ R_i \stackrel{h}{\sim} R_{i+1} \stackrel{h}{\sim} R_{i+2} \stackrel{h}{\sim} R_{i+3} \]
\[ R_i \stackrel{N}{\not \sim} R_{i+1} \stackrel{N}{\sim} R_{i+2} \stackrel{N}{\not \sim} R_{i+3} \]
(also $R_i \stackrel{N}{\not \sim} R_{i+3}$, ), and
\[ R_j \stackrel{h}{\sim} R_{j+1} \stackrel{h}{\sim} R_{j+2} \stackrel{h}{\not \sim} R_{j+3} \]
\[ R_j \stackrel{N}{\sim} R_{j+1} \stackrel{N}{\sim} R_{j+2} \stackrel{N}{\not \sim} R_{j+3}. \]
Of course, each rectangle is equivalent to itself with respect to either relation.

\begin{figure}[htb]
\centerline{}\centerline{}
\begin{center}
\ \psfig{file=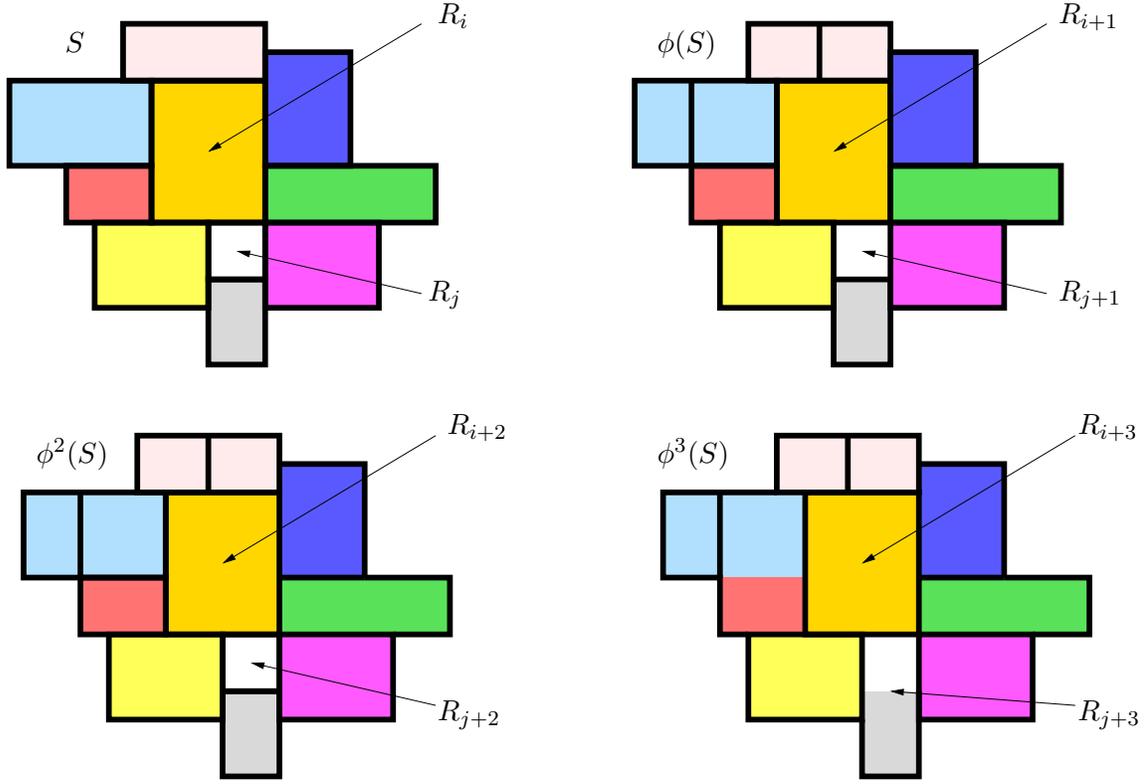,height=4truein} \caption{A local picture of the surface after applying $\phi^j$, $j =
0,1,2,3$.}  \label{F:equivalence}
\end{center}
  \setlength{\unitlength}{1in}
  \begin{picture}(0,0)(0,0)
    \put(2.45,4.65){$R_i$}
    \put(5.7,4.65){$R_{i+1}$}
    \put(2.5,2.5){$R_{i+2}$}
    \put(5.8,2.5){$R_{i+3}$}
    \put(.5,4.5){$S$}
    \put(3.6,4.5){$\phi(S)$}
    \put(.35,2.35){$\phi^2(S)$}
    \put(3.6,2.35){$\phi^3(S)$}
    \put(2.4,3.2){$R_j$}
    \put(5.7,3.2){$R_{j+1}$}
    \put(2.45,1){$R_{j+2}$}
    \put(5.8,1){$R_{j+3}$}
  \end{picture}
\end{figure}

\begin{proposition} \label{P:relation 2 prop}
Let $\phi \in \Psi_P$, and let $\mathcal R$ be a small Markov partition for $\phi$.  Each $N$--equivalence
class has the form
\[\{R_i,R_{i+1},\dots,R_{i+s} \} = \{R_i,\phi(R_i),\dots,\phi^s(R_i)\}\]
for some $s = s(R_i) \in \mathbb Z$.  There is a constant $E_N = E_N(P)$ so that $\mathcal R$ has at most $E_N$
$N$--equivalence classes.
\end{proposition}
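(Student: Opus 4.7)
The plan is to exhibit each $N$--equivalence class as a consecutive subrun of some $h$--equivalence class, and then bound the total number of such subruns across $\mathcal R$.

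Observe first that $N$--equivalence refines $h$--equivalence: if $R \stackrel{N}{\sim} R'$, then $R \in N_1(R)$ is itself unmixed by $\phi^{\beta(R,R')}$, which forces $\phi^{\beta(R,R')}(R)=R'$, so $R \stackrel{h}{\sim} R'$. By Proposition \ref{P:relation 1 prop} each $h$--class has the form $\{R_i, R_{i+1}, \dots, R_{i+k}\}$ with $R_{i+j}=\phi^j(R_i)$, so it suffices to prove that within each such $h$--class the $N$--classes are consecutive subintervals.

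To establish this, suppose $R_a \stackrel{N}{\sim} R_c$ with $a \leq b \leq c$ all lying in the same $h$--class. Every $R'' \in N_1(R_a)$ is unmixed by $\phi^{c-a}$, so by Corollary \ref{C:persistent stretch} it is also unmixed by $\phi^{b-a}$; hence $R_a \stackrel{N}{\sim} R_b$. Next, one identifies $N_1(R_b)$ with the set of images $\{\phi^{b-a}(R'') : R'' \in N_1(R_a)\}$: the inclusion $\supseteq$ is immediate from the fact that $\phi$ preserves the set of marked and singular points, while $\subseteq$ uses that rectangles of $\mathcal R$ have pairwise disjoint interiors, so any rectangle adjacent to $R_b$ must agree with one of the images covering the neighborhood $\phi^{b-a}(\bigcup_{R''\in N_1(R_a)} R'')$ of $R_b$. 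Applying Corollary \ref{C:persistent stretch} once more to $\phi^{c-b}$ acting on these images yields $R_b \stackrel{N}{\sim} R_c$.

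To count the $N$--classes, fix an $h$--class $\{R_i, \dots, R_{i+k}\}$; an $N$--subclass ends at index $i+j$ with $j < k$ precisely when $R_{i+j}\stackrel{N}{\not\sim} R_{i+j+1}$, which by definition means $\phi$ mixes some rectangle in $N_1(R_{i+j})$. Lemma \ref{L:finite nonhomeo} bounds the number of rectangles mixed by $\phi$ by $C=C(P)$, and since adjacency is symmetric, Lemma \ref{L:rect in N1} bounds the number of $R$ with $R'' \in N_1(R)$ by $D_3=D_3(P)$ for each mixed $R''$. Hence the total number of internal breaks, summed over all $h$--classes, is at most $CD_3$, and together with the $E_h$ bound on the number of $h$--classes from Proposition \ref{P:relation 1 prop}, we may set $E_N = E_h + CD_3$.

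The one step requiring genuine care is the identification of $N_1(R_b)$ with $\phi^{b-a}(N_1(R_a))$, which is where both the Markov property and the preservation of the singular/marked locus are essential; once this is in hand, the persistence corollary combined with the counting lemmas makes the rest routine bookkeeping.
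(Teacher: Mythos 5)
Your proof is correct and follows essentially the same route as the paper: exhibit the $N$--classes as consecutive subruns of $h$--classes, then count breaks using Lemma \ref{L:finite nonhomeo} (at most $C$ mixed rectangles) together with Lemma \ref{L:rect in N1} (each mixed rectangle has at most $D_3$ rectangles with it in their $1$--neighborhood). The paper disposes of the consecutivity with a one-line appeal to Corollary \ref{C:persistent stretch}, whereas you spell out the $R_a \leq R_b \leq R_c$ argument and the identification $N_1(R_b) = \phi^{b-a}(N_1(R_a))$; the only small imprecision is the final appeal to Corollary \ref{C:persistent stretch} to get $R_b \stackrel{N}{\sim} R_c$ --- what you actually need is that $\phi^{c-b}|_{\phi^{b-a}(R'')} = (\phi^{c-a}|_{R''}) \circ (\phi^{b-a}|_{R''})^{-1}$ is a homeomorphism onto a rectangle, which follows directly from $R''$ being unmixed by $\phi^{b-a}$ and $\phi^{c-a}$ rather than from the persistence corollary itself. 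You also obtain the slightly sharper constant $E_N = E_h + CD_3$ by counting the breaking rectangles globally rather than per $h$--class, versus the paper's $E_N = E_h(CD_3+1)$; both are of course adequate.
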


\begin{proof}

Similarly to the proof of Proposition~\ref{P:relation 1 prop}, the first statement follows from Corollary \ref{C:persistent stretch}.
Now, consider an arbitrary $h$--equivalence class:
\[ \{R_i,R_{i+1},\dots,R_{i+k}\} = \{R_i,\phi(R_i),\dots,\phi^k(R_i)\}. \]
This is partitioned into its $N$--equivalence classes as follows.  The class divides at $R_{i+j}$ (that is, $R_{i+j+1}$ begins a new $N$--equivalence class) if and only if
$N_1(R_{i+j})$ contains a rectangle that is mixed by $\phi$.
By Lemma~\ref{L:finite nonhomeo}, at most $C=C(P)$ rectangles of $\mathcal R$ are mixed.
Moreover, according to Lemma \ref{L:rect in N1}, each rectangle---in particular, each mixed rectangle---is a
$1$--neighbor to at most $D_3=D_3(P)$ rectangles.  Therefore, there are at most $CD_3$ rectangles that are $1$--neighbors of mixed rectangles.
Thus, each $h$--equivalence class can be subdivided into at
most $(CD_3 + 1)$ $N$--equivalence classes.  According to Proposition \ref{P:relation 1 prop} the number of $h$--equivalence classes is at most $E_h$, and thus the proposition follows if we take $E_N = E_h(CD_3+1)$.
\end{proof}

The next proposition explains one advantage of the $N$--equivalence relation over the $h$--equivalence relation.

\begin{proposition} \label{P:sim2 and X}
Let $(\phi:S \to S) \in \Psi_P$ and let $\mathcal R$ be a small Markov partition for $\phi$.  If $R \stackrel{N}{\sim} R'$,
then $\phi^{\beta(R,R')}|_R:R \to R'$ is cellular with respect to $X$.
\end{proposition}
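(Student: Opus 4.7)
The plan is to verify cellularity directly from the definition by checking that $\phi^\beta := \phi^{\beta(R,R')}$ sends the $0$-- and $1$--skeleta of $X$ lying in $R$ to the corresponding skeleta lying in $R'$. Since $\phi^\beta(R) = R'$, the sole $2$--cell is handled automatically, so only the skeleta along $\partial R$ require attention. Throughout I will use that $\phi^\beta|_R$ is affine linear in preferred coordinates, hence sends the four corners of $R$ to the four corners of $R'$, and that any pseudo-Anosov permutes the set of marked points and permutes the set of singularities of its foliations.

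For the remaining $0$--cells: any vertex of $X$ contained in $R$ lies on $\partial R$ and is either (i) a corner of $R$, (ii) a corner of some adjacent rectangle $R'' \in N_1(R)$ at a non-corner point of $\partial R$, or (iii) a marked or singular point. Case (i) is the affine observation above; case (iii) follows from the permutation statement. In case (ii), the hypothesis $R \stackrel{N}{\sim} R'$ forces $\phi^\beta$ to take $R''$ homeomorphically onto some rectangle of $\mathcal R$, and hence its corners to corners of that rectangle, which are vertices of $X$.

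For $1$--cells: let $e \subset \partial R$ be a $1$--cell of $X$ with endpoints $v_1, v_2$; then $\phi^\beta(e)$ is an arc in $\partial R'$ whose endpoints $\phi^\beta(v_1), \phi^\beta(v_2)$ are vertices of $X$ by the previous paragraph. I must rule out an interior vertex $w$ of $X$ on $\phi^\beta(e)$. If $w$ is marked or singular, then $(\phi^\beta)^{-1}(w)$ is marked or singular and lies in the interior of $e$, contradicting that $e$ has no vertex of $X$ in its interior. If $w$ is a non-marked, non-singular corner of some $R''' \in \mathcal R$, then $R''' \neq R'$ (else $w$ would coincide with $\phi^\beta(v_1)$ or $\phi^\beta(v_2)$), and $R''' \in N_1(R')$. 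The crucial subclaim is that $\phi^\beta$ induces a bijection $N_1(R) \to N_1(R')$. Granting this, $R''' = \phi^\beta(R'')$ for some $R'' \in N_1(R)$, so $(\phi^\beta)^{-1}(w)$ is a corner of $R''$, hence a vertex of $X$ in the interior of $e$, again a contradiction.

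The subclaim is the only nontrivial ingredient. The inclusion $\phi^\beta(N_1(R)) \subseteq N_1(R')$ is immediate since $\phi^\beta$ is a homeomorphism of $S$ taking each $R'' \in N_1(R)$ to a rectangle of $\mathcal R$. For the reverse, given $R''' \in N_1(R')$, choose $p \in R''' \cap R'$ which is neither marked nor singular and set $q = (\phi^\beta)^{-1}(p) \in \partial R$. A sufficiently small open disk $D$ about $q$ in $S$ is covered by $R$ together with the rectangles of $N_1(R)$ containing $q$, so $\phi^\beta(D)$, an open disk about $p$, is covered by $R'$ together with the images of those rectangles (which are rectangles of $\mathcal R$). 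Because $R'''$ meets every neighborhood of $p$ in a set of positive $2$--dimensional measure and distinct rectangles of $\mathcal R$ have disjoint interiors, $R'''$ must coincide with $\phi^\beta(R'')$ for some $R'' \in N_1(R)$. I expect this local $2$--dimensional covering argument to be the main technical obstacle, as it is what translates the combinatorial condition of $N$--equivalence into the geometric statement about rectangles adjacent to $R'$; everything else is straightforward unpacking of the definitions of $X$ and $\stackrel{N}{\sim}$.
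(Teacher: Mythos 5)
Your proof is correct and follows essentially the same strategy the paper uses: both arguments reduce the claim to the observation that the $X$--cell structure near $R$ is determined by the singular/marked points together with the adjacency pattern of $N_1(R)$, and that $\phi^{\beta(R,R')}$ respects this data. The paper's one-sentence proof only states the forward inclusion $\phi^\beta(N_1(R)) \subseteq N_1(R')$ and declares ``the result follows,'' whereas you correctly identify that the surjectivity of $\phi^\beta\colon N_1(R)\to N_1(R')$ is also needed (otherwise a rectangle adjacent to $R'$ could create extra vertices on $\partial R'$ unaccounted for by the image cell structure), and you supply the local disk-covering argument that establishes it; this is a genuine gap-filling contribution, but not a change of approach.
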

\begin{proof}
The cell structure $X$ is defined using the rectangles of $\mathcal R$, together with the way adjacent rectangles intersect one another, and the singular and marked points.  Since $\phi$ preserves the singular and marked points, and since $R \stackrel{N}{\sim} R'$ implies
$\phi^{\beta(R,R')}$ maps all rectangles of $N_1(R)$ homeomorphically onto a rectangle in $N_1(R')$, the result
follows.
\end{proof}

\subsection{The ``right'' relation on rectangles: $Y$--equivalence}

We define a refinement of $N$--equivalence, called $Y$--equivalence, by dividing each $N$--equivalence class
$\{R_i,R_{i+1},\dots,R_{i+k}\}$ with more than one element into two $Y$--equivalence classes $\{R_i\}$ and
$\{R_{i+1},R_{i+2},\dots,R_{i+k}\}$. That is, we split off the initial element of each $N$--equivalence class
into its own $Y$--equivalence class.  The $Y$--equivalence classes are also consecutive with respect to the indices
and so we can refer to the initial and terminal rectangles of a $Y$--equivalence class.  We write $R \stackrel{Y}{\sim} R'$ if $R$ and $R'$ are $Y$--equivalent.

The new feature of $Y$--equivalence is that any time two distinct rectangles are $Y$--equivalent, they both have
codegree one.  In fact, all rectangles in the $1$--neighborhood are $\phi$--images of unmixed rectangles.  It follows
that this equivalence relation behaves nicely with respect to the cell structure $Y$, hence the terminology; see
Proposition~\ref{P:sim1 and Y} below.

If we take $E_Y=2 E_N$, we immediately obtain the following consequence of Proposition~\ref{P:relation 2 prop}.

\begin{corollary} \label{C:relation 1 prop}
There is a constant $E_Y=E_Y(P)$ so that for any $\phi \in \Psi_P$, any small Markov partition $\mathcal R$ for
$\phi$ has at most $E_Y$ $Y$--equivalence classes.
\end{corollary}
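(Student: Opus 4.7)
The plan is essentially to read the constant $E_Y$ off the definition of $Y$--equivalence as a refinement of $N$--equivalence. By construction, each $N$--equivalence class $\{R_i, R_{i+1}, \ldots, R_{i+k}\}$ is split into at most two $Y$--equivalence classes: the singleton $\{R_i\}$ and (if $k \geq 1$) the tail $\{R_{i+1}, \ldots, R_{i+k}\}$. Thus the number of $Y$--equivalence classes is at most twice the number of $N$--equivalence classes.

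First I would invoke Proposition \ref{P:relation 2 prop}, which supplies the constant $E_N = E_N(P)$ bounding the number of $N$--equivalence classes of $\mathcal R$. Then I would observe, directly from the definition given in the preceding paragraph of Section 6.3, that each $N$--class contributes at most $2$ distinct $Y$--classes to the partition of $\mathcal R$. Summing over $N$--classes yields the bound
\[ \#\{Y\text{--equivalence classes}\} \leq 2 \cdot \#\{N\text{--equivalence classes}\} \leq 2 E_N. \]
Setting $E_Y = 2 E_N$, which depends only on $P$ since $E_N$ does, completes the argument.

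There is no genuine obstacle here; the corollary is a book-keeping consequence of how $Y$--equivalence was constructed as a refinement of $N$--equivalence, together with the already-established bound from Proposition \ref{P:relation 2 prop}. The only thing worth being careful about is confirming that a singleton $N$--class contributes just one $Y$--class (so the factor $2$ is not tight but certainly valid as an upper bound), which is immediate from the stipulation that the splitting occurs only when the $N$--class has more than one element.
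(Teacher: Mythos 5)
Your argument is correct and is exactly the paper's own reasoning: the paper notes the corollary is immediate from Proposition \ref{P:relation 2 prop} with $E_Y = 2E_N$, since each $N$--class splits into at most two $Y$--classes by construction.
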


The next proposition is the analogue  of Proposition \ref{P:sim2 and X} for $Y$--equivalence and the cell structure
$Y$.

\begin{proposition} \label{P:sim1 and Y}
Let $(\phi:S \to S) \in \Psi_P$ and let $\mathcal R$ be a small Markov partition for $\phi$.  If the $Y$--equivalence class of $R$ contains more than one element, and $R
\stackrel{Y}{\sim} R'$, then the $X$-- and $Y$--cell structures on $R$ agree, and
likewise for $R'$.  Moreover, $\phi^{\beta(R,R')}|_R:R \to R'$ is cellular with respect to $Y$.
\end{proposition}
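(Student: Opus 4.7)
The plan is to read off the codegree information that the $Y$-equivalence hypothesis forces on the 1-neighborhoods of $R$ and $R'$, translate that into the statement that no subdivision occurs when passing from $X$ to $Y$ on $R$ and $R'$, and then invoke Proposition \ref{P:sim2 and X}.

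First I would unpack the hypothesis. Since the $Y$-equivalence class of $R$ has more than one element, by the definition of $Y$-equivalence it has the form $\{R_{i+1},R_{i+2},\dots,R_{i+k}\}$ arising from an $N$-equivalence class $\{R_i,R_{i+1},\dots,R_{i+k}\}$ with $k \geq 2$, and I write $R = R_{i+s}$, $R' = R_{i+t}$ with $1 \leq s,t \leq k$. In particular $R_{i+s-1} \stackrel{N}{\sim} R_{i+s}$, so $\phi$ mixes no rectangle of $N_1(R_{i+s-1})$ and sends $N_1(R_{i+s-1})$ bijectively onto $N_1(R)$ by homeomorphisms of individual rectangles.

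Next I would prove that every rectangle in $N_1(R)$ has codegree $1$: each such $\tilde R$ is $\phi(\hat R)$ for some unmixed $\hat R \in N_1(R_{i+s-1})$, and Proposition \ref{P:static char} tells us that the unique target of an unmixed rectangle has codegree $1$. Recall from the proof of Lemma \ref{L:bounded complexity 2} that the $Y$-subdivision adds $\codeg(R'') - 1$ horizontal $1$-cells to each $R'' \in \mathcal R$ together with $2(\codeg(R'')-1)$ new $0$-cells on its vertical sides, and the only subdivision of a vertical $1$-cell of $X$ comes from these new $0$-cells. With every rectangle of $N_1(R)$ having codegree $1$, there are no new horizontal $1$-cells in $R$ and no new $0$-cells on $\partial R$ from either $R$ or any rectangle sharing a vertical side with it (all such rectangles lie in $N_1(R)$). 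Hence the $X$- and $Y$-cell structures agree on $R$; repeating the argument with $t$ in place of $s$ gives the same conclusion for $R'$.

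Finally, since $Y$-equivalence refines $N$-equivalence, Proposition \ref{P:sim2 and X} gives that $\phi^{\beta(R,R')}|_R \colon R \to R'$ is cellular with respect to $X$, and the coincidence of the $X$- and $Y$-structures on $R$ and on $R'$ upgrades this to cellularity with respect to $Y$. The one point that needs care is the claim that vertical $1$-cells on $\partial R$ are not subdivided from the adjacent side; this uses crucially that any rectangle sharing a vertical $1$-cell with $R$ lies in $N_1(R)$ and thus also has codegree $1$, which is exactly the information that $Y$-equivalence (via the $\phi$-image of an unmixed $1$-neighborhood) was designed to guarantee.
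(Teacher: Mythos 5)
Your proof is correct, and it reaches the first claim (that the $X$- and $Y$-cell structures on $R$ and $R'$ agree) by a route that differs from the paper's in a meaningful way, even though the final step (applying Proposition~\ref{P:sim2 and X} to $R \stackrel{N}{\sim} R'$ and then upgrading $X$-cellularity to $Y$-cellularity) is identical.

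The paper's argument uses the observation that $\phi^{-1}(R) \in \mathcal R$ and $\phi^{-1}(R) \stackrel{N}{\sim} R$, invokes Proposition~\ref{P:sim2 and X} to conclude that $\phi|_{\phi^{-1}(R)}\colon \phi^{-1}(R) \to R$ is cellular with respect to $X$, and then asserts that this forces the $Y$-structure on $R$ to coincide with the $X$-structure ``by the definition of $Y$.'' That last step is left to the reader: one has to see that since $\phi$ takes the whole cell structure near $\phi^{-1}(R)$ homeomorphically onto the cell structure near $R$, overlaying $\phi(\mathcal R)$ introduces nothing new in $R$. You instead translate the $N$-equivalence $R_{i+s-1} \stackrel{N}{\sim} R_{i+s}=R$ into combinatorial data: every rectangle in $N_1(R)$ is the $\phi$-image of an unmixed rectangle in $N_1(R_{i+s-1})$, so by Proposition~\ref{P:static char} every rectangle in $N_1(R)$ has codegree~$1$, and then you read off from the description of the $X$-to-$Y$ subdivision (in the proof of Lemma~\ref{L:bounded complexity 2}) that no new $0$-cells or $1$-cells appear on or inside $R$. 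This is more explicit and, in my view, makes the ``by the definition of $Y$'' step genuinely transparent: the subdivision mechanism is entirely controlled by codegrees, and you show those are all trivial near $R$. The one place you glide a bit is the assertion that $\phi$ carries $N_1(R_{i+s-1})$ \emph{onto} $N_1(R)$ (the paper's Proposition~\ref{P:sim2 and X} only states an ``into''); this is true because the images of the rectangles in $N_1(R_{i+s-1})$ cover a neighborhood of $R$ and rectangles of $\mathcal R$ have disjoint interiors, but it deserves a sentence. Similarly, the claim that the only subdivision of a vertical $1$-cell of $X$ arises from the endpoints of the horizontal cuts in the two rectangles flanking it is used implicitly in the paper and is correct, but you state it as a recalled fact rather than verifying it. With those two small points acknowledged, your argument is a sound and arguably more self-contained proof of the first assertion than the paper's.
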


The assumption that the $Y$--equivalence class of $R$ contains more than one element is necessary for the first statement, since otherwise it would follow that the $X$ and
$Y$ cell structures coincide on all of $S$.  This would imply that $\phi$ is cellular with respect to $X$,  and hence
finite order, which is absurd.

\begin{proof}

Since the $Y$--equivalence class of $R$ and $R'$ contains more than one element, neither $R$ nor $R'$ is the initial rectangle of the $N$--equivalence class they lie in.  Thus, $\phi^{-1}(R)$ and $\phi^{-1}(R')$ are both elements of $\mathcal R$ and
\[ \phi^{-1}(R) \ \stackrel{N}{\sim} \ R \stackrel{N}{\sim} \ \phi^{-1}(R') \ \stackrel{N}{\sim} \ R'.  \]
By Proposition \ref{P:sim2 and X}, the maps
\[ \phi|_{\phi^{-1}(R)}:\phi^{-1}(R) \to R \ \ \textnormal{and}\ \ \phi|_{\phi^{-1}(R')}:\phi^{-1}(R') \to R' \]
are both cellular with respect to $X$.  Thus, by the definition of $Y$, it follows that the $Y$--structures on $R$ and
$R'$ are exactly the same as the $X$--structures on $R$ and $R'$, respectively. As $R \stackrel{N}{\sim} R'$,
Proposition \ref{P:sim2 and X} guarantees that
\[ \phi^{\beta(R,R')}|_R:R \to R'\]
is cellular with respect to $X$, and hence also with respect to $Y$, as required.
\end{proof}

\subsection{From rectangles to the surface}

We will use the $Y$--equivalence relation to construct a quotient of $S$ by gluing $R$ to $R'$ by $\phi^{\beta(R,R')}$
whenever $R \stackrel{Y}{\sim} R'$.  To better understand this quotient, we will study the equivalence relation on $S$
that this determines.  This is most easily achieved by breaking the equivalence relation up into simpler relations as
follows.

We write $(x,R) \leftrightarrow (x',R')$ to mean that the following conditions hold:
\begin{enumerate}
\item $x \in R$
\item $x' \in R'$
\item $R \stackrel{Y}{\sim} R'$
\item $\phi^{\beta(R,R')}(x) = x'$
\end{enumerate}
Now write $x \leftrightarrow x'$ and say that $x$ {\em is related to} $x'$ if $(x,R) \leftrightarrow (x',R')$ for some $R,R' \in \mathcal R$.

The relation $\leftrightarrow$ is easily seen to be symmetric and reflexive, but it may not be transitive.
This is because if $x'$ lies in two distinct rectangles $R'$ and $R''$, it may be that $(x,R) \leftrightarrow (x',R')$ and $(x',R'') \leftrightarrow (x'',R''')$.  We let $\sim$ denote the equivalence relation on $S$ obtained from
the transitive closure of the relation $\leftrightarrow$ on $S$.

\begin{lemma} \label{L:hopping sim}
The $\sim$--equivalence class of $x$, which is contained in $\{ \phi^k(x)\}_{k \in \mathbb Z}$, consists of consecutive
$\phi$-iterates of $x$.  What is more, if $x \sim x'$, then after possibly interchanging the roles of $x$ and $x'$,
there exists $k \in \mathbb Z_{\geq 0}$, and a sequence of rectangles $R_{i_0}, \dots, R_{i_{k-1}} \in \mathcal R$, so that
\[ x \leftrightarrow \phi(x) \leftrightarrow \dots \leftrightarrow \phi^{k-1}(x) \leftrightarrow \phi^k(x) = x' \]
with $(\phi^j(x),R_{i_j}) \leftrightarrow (\phi^{j+1}(x),\phi(R_{i_j}))$ for $j = 0,\dots,k-1$.
\end{lemma}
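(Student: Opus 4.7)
My plan is to prove both parts of the lemma by refining any $\sim$-chain into one whose consecutive entries differ by a single application of $\phi$, and then applying a simple walk argument on $\mathbb Z$.

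First I would unwind the transitive closure of $\leftrightarrow$: if $x \sim x'$, there is a finite chain $x = x_0 \leftrightarrow x_1 \leftrightarrow \cdots \leftrightarrow x_N = x'$ witnessed by pairs $(x_\ell,R^{(\ell)}) \leftrightarrow (x_{\ell+1},R'^{(\ell)})$, and the definition of $\leftrightarrow$ forces $x_{\ell+1} = \phi^{\beta_\ell}(x_\ell)$ for $\beta_\ell = \beta(R^{(\ell)},R'^{(\ell)})$. Summing, $x' = \phi^k(x)$ with $k = \sum_\ell \beta_\ell \in \mathbb Z$, which already proves the inclusion of the $\sim$-class of $x$ in $\{\phi^j(x)\}_{j\in\mathbb Z}$.

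The main step is to refine each $\leftrightarrow$ into $|\beta_\ell|$ unit steps. By Proposition \ref{P:relation 2 prop} (together with the splitting that defines $Y$-equivalence in Section \ref{S:equiv rel}), the $Y$-class containing $R^{(\ell)}$ and $R'^{(\ell)} = \phi^{\beta_\ell}(R^{(\ell)})$ is a block of consecutive $\phi$-iterates of $R^{(\ell)}$; in particular each consecutive pair $\phi^j(R^{(\ell)}), \phi^{j+1}(R^{(\ell)})$ inside this block is $Y$-equivalent with exponent $+1$. Since $\phi^j(x_\ell) \in \phi^j(R^{(\ell)})$, this yields a legitimate relation
\[ (\phi^j(x_\ell),\phi^j(R^{(\ell)})) \leftrightarrow (\phi^{j+1}(x_\ell),\phi^{j+1}(R^{(\ell)})), \]
and splicing these refinements along the original chain produces a new chain from $x$ to $\phi^k(x)$ whose consecutive entries are $\phi^{\pm 1}$-translates. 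Writing the $i$th entry as $\phi^{a_i}(x)$, this is precisely a nearest-neighbor walk on $\mathbb Z$ from $0$ to $k$.

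Both conclusions now fall out. Any such walk visits every integer between $0$ and $k$, so every $\phi^\ell(x)$ with $\ell$ between $0$ and $k$ appears in the chain and is therefore $\sim$-equivalent to $x$; letting $x'$ range over the $\sim$-class of $x$ gives the consecutive-iterates description. For the explicit direct chain, I would swap $x$ and $x'$ if necessary to arrange $k \geq 0$, and for each $j \in \{0,\ldots,k-1\}$ pick the last time $t$ at which the walk occupies level $j$; since the walk terminates at $k > j$, at step $t+1$ it cannot move down (it would have to return to $j$), so it must move up, and the rectangle $R_{i_j}$ employed in this $(t+1)$st refined step is a $\phi$-iterate of some $R^{(\ell)}$ satisfying $R_{i_j} \stackrel{Y}{\sim} \phi(R_{i_j})$ and $(\phi^j(x),R_{i_j}) \leftrightarrow (\phi^{j+1}(x),\phi(R_{i_j}))$, as required.

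I expect the only real subtlety to be verifying that the intermediate rectangles $\phi^j(R^{(\ell)})$ are literally elements of $\mathcal R$ belonging to the same $Y$-class as $R^{(\ell)}$—this is exactly the content of Proposition \ref{P:relation 2 prop} and of the definition of $Y$-equivalence. Beyond that, the ``last-visit'' trick for extracting an upward step, and the fact that a $\pm 1$ walk from $0$ to $k$ crosses every intermediate integer, are elementary.
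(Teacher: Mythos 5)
Your proof is correct and takes essentially the same approach as the paper's. The paper's argument observes that each relation $(x,R)\leftrightarrow(x',R')$ with $\beta(R,R')\geq 0$ yields the whole ladder $(x,R)\leftrightarrow(\phi^j(x),\phi^j(R))$ for $j=0,\dots,\beta(R,R')$, because $Y$--classes are blocks of consecutive $\phi$--iterates, and then asserts that the transitive closure is obtained by ``stringing together'' such intervals of $\mathbb Z$; your refinement into unit steps, the resulting nearest-neighbor walk on $\mathbb Z$, and the last-visit trick for extracting a monotone subchain make precise exactly what the paper leaves implicit in that stringing-together sentence.
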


The second part of the lemma says that when $x \sim x'$, after possibly interchanging $x$ and $x'$, we can get from $x$
to $x'$ moving forward through consecutive elements of the $\phi$--orbit by applying the relation $\leftrightarrow$. We
caution the reader that it may be necessary to interchange the roles of $x$ and $x'$, even when they lie in a periodic
orbit.

\begin{remark}
For a periodic point, the orbit $\{ \phi^k(x)\}_{k \in \mathbb Z}$ is a finite set, in which case the ordering is a
cyclic ordering.  However, it still makes sense to say that a set consists of consecutive $\phi$-iterates of $x$.
\end{remark}

\begin{proof}[Proof of Lemma~\ref{L:hopping sim}]
Note that if $(x,R) \leftrightarrow (x',R')$ with $\beta(R,R') \geq 0$ then we also have
\[(x,R) \leftrightarrow (\phi^j(x),\phi^j(R)) \]
for $j = 0,\dots,\beta(R,R')$, since $\{R,\phi(R),\dots,\phi^{\beta(R,R')}(R)\}$ is contained in a single
$Y$--equivalence class (that is, the $Y$--equivalence classes of rectangles are consecutive). Since $\sim$ is the
transitive closure of $\leftrightarrow$, the full equivalence class is obtained by stringing together these sets
whenever they intersect, and it follows that the equivalence class of $x$ consists of consecutive $\phi$-iterates of
$x$, and further that they are related as in statement of the lemma.
\end{proof}

\subsection{The cell structure $Y$ and $\sim$--equivalence}

The main purpose of this section is to describe the structure of the $\sim$--equivalence classes and how they relate
to the $Y$ cell structure.

Given a point $x \in S$, set
\[ k_-(x) = \inf \{k \in \Z \, | \, \phi^j(x) \sim x \mbox{ for all } k \leq j \leq 0 \}\]
and
\[ k_+(x) = \sup \{ k \in \Z \, | \, \phi^j(x) \sim x \mbox{ for all } 0 \leq j \leq k \}.\]
That is, we look at all consecutive points in the orbit of $x$ that are equivalent, and take the infimum and supremum,
respectively, of the consecutive exponents (beginning at $0$) that occur.  According to Lemma \ref{L:hopping sim}, the set $\{\phi^j(x)\}_{j =
k_-(x)}^{k_+(x)}$ is the $\sim$--equivalence class of $x$. Observe that if $x$ is a fixed point of $\phi$  then $k_-(x)
= - \infty$ and $k_+(x) = + \infty$.

\begin{lemma}\label{L:orbit sim}
If $x$ is not a marked point or singular point, then $k_-(x),k_+(x) \in \mathbb Z$.  In particular, it cannot be the
case that the entire $\phi$--orbit of $x$ lies in the same $\sim$--equivalence class.
\end{lemma}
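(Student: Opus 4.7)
The plan is to argue by contradiction. Suppose $x$ is neither marked nor singular, yet the entire $\phi$--orbit of $x$ lies in a single $\sim$--equivalence class; by symmetry I may assume $k_+(x)=+\infty$.

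First I would extract witness rectangles. For each $j\geq 0$ the one-step relation $\phi^j(x)\sim\phi^{j+1}(x)$ combined with Lemma~\ref{L:hopping sim} produces a rectangle $R_j\in\mathcal{R}$ with $\phi^j(x)\in R_j$ and $R_j\stackrel{Y}{\sim}\phi(R_j)$. Thus $R_j$ is non-terminal in a non-singleton $Y$--equivalence class, hence unmixed by $\phi$, and $\phi(R_j)\in\mathcal{R}$ lies in the same $Y$--class as $R_j$.

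Second, I would set up the key dichotomy via the pseudo-Anosov stretching. If the chain could always take $R_{j+1}=\phi(R_j)$, then inductively $R_j=\phi^j(R_0)\in\mathcal{R}$ for all $j\geq 0$; by finiteness of $\mathcal{R}$ and pigeonhole, $\phi^n(R_0)=R_0$ for some $n>0$, contradicting Lemma~\ref{L:bounded distortion} since $\ell(\phi^n(R_0))=\lambda^n\ell(R_0)>\ell(R_0)$. So the chain must \emph{hop} ($R_{j+1}\neq\phi(R_j)$) at infinitely many $j$, and at each such hop the point $\phi^{j+1}(x)$ lies in two distinct rectangles of $\mathcal{R}$, and hence in the shared boundary set $V\cup H$, where $V=\bigcup_i\partial_v R_i$ and $H=\bigcup_i\partial_h R_i$. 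The Markov conditions give $\phi(V)\subseteq V$ and $\phi^{-1}(H)\subseteq H$, so $V$ is forward-invariant under $\phi$ and $H$ is backward-invariant.

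Finally, the symmetric backward analysis from $k_-(x)=-\infty$ produces infinitely many backward hops as well, and a short case analysis using the forward-invariance of $V$ and backward-invariance of $H$ pins the entire orbit of $x$ inside $V\cap H$, the finite $0$--skeleton of $X$. Since $x$ is neither marked nor singular the orbit consists of regular corner points of $X$, so $x$ is periodic with orbit at regular corners. One then closes the witness chain cyclically over the period and uses the preferred-coordinate form of $\phi^p$ at such a regular fixed point to extract a rectangle of $\mathcal{R}$ that is setwise invariant under some positive power of $\phi$, again contradicting Lemma~\ref{L:bounded distortion}. The main obstacle I anticipate is this final step: tracking the cyclic chain of rectangles through the hops at the regular corner points of a periodic orbit, and extracting the invariant rectangle, requires careful local bookkeeping of which incident rectangle is chosen at each step around each corner.
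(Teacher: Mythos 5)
Your approach diverges substantially from the paper's, and there is a genuine gap in the middle of your argument.

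The paper's strategy is more direct: after reindexing within the orbit (periodic case) or choosing an iterate (nonperiodic case), it locates an iterate $\phi^j(x)$ lying in a rectangle $R$ that is \emph{mixed} by $\phi$. The existence of such $R$ is exactly the pigeonhole-plus-stretching observation that drives your first dichotomy, so that part of your intuition is on target. But the paper then finishes in a few lines: since $R$ is mixed, any rectangle $R'$ containing $\phi^j(x)$ is adjacent to $R$, so $R' \stackrel{N}{\not\sim} \phi(R')$ and hence $R' \stackrel{Y}{\not\sim} \phi(R')$; and it separately handles the ``interchange the roles'' branch of Lemma~\ref{L:hopping sim} by noting that $(\phi(x),R_{i_0}) \leftrightarrow (\phi^2(x),\phi(R_{i_0}))$ forces $\phi^{-1}(R_{i_0}) \stackrel{N}{\sim} R_{i_0}$, again impossible since $\phi^{-1}(R_{i_0})$ is adjacent to the mixed $R$. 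No analysis of $V$ and $H$ or of corner points is needed.

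The concrete gap in your proposal is the step where you claim the case analysis ``pins the entire orbit of $x$ inside $V\cap H$.'' The forward hops give, at infinitely many $j$, that $\phi^{j}(x) \in V\cup H$; the backward hops give the analogous statement for negative iterates. Splitting on whether infinitely many forward-hop landings lie in $V$ versus in $H$, and likewise backward, one obtains four sub-cases. In two of them the whole orbit does land in $V$ or in $H$ (using backward-invariance of $H$ from forward hops, or forward-invariance of $V$ from backward hops). But in the sub-case where all but finitely many forward-hop landings lie in $V$ and all but finitely many backward-hop landings lie in $H$, you only learn that far-forward iterates are in $V$ and far-backward iterates are in $H$; $\phi(V)\subseteq V$ pushes membership in $V$ \emph{forward} and $\phi^{-1}(H)\subseteq H$ pushes membership in $H$ \emph{backward}, so neither conclusion propagates to the whole orbit, and you never get $V\cap H$. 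Even granting the conclusion, the closing step --- extracting a rectangle setwise invariant under $\phi^p$ at a periodic regular corner --- does not follow: each $\phi$-step takes the witness rectangle homeomorphically onto \emph{some} rectangle containing the next orbit point, but at a regular corner there can be up to four incident rectangles, and nothing forces the chain to return to the rectangle you started with. You flagged this as the main obstacle, and it is indeed a real one. I would recommend replacing the $V$/$H$ analysis with the paper's route: use your pigeonhole argument to produce a mixed rectangle at some $\phi^j(x)$, and then show directly that mixedness obstructs the relation $\phi^j(x)\leftrightarrow\phi^{j+1}(x)$ (and its ``reversed'' variant from Lemma~\ref{L:hopping sim}).
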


\begin{proof}

First suppose that $x$ is a periodic point of order $n$.  The only way that $k_+(x) = \infty$ or $k_-(x) = -\infty$ is
if any two points of the orbit are $\sim$--equivalent.  Up to changing $x$ within its orbit, we may assume that there
is a rectangle $R$ containing $x$ that is mixed by $\phi$; indeed, otherwise the pseudo-Anosov map $\phi^{kn}$ would
preserve the collection of rectangles containing $x$ for all $k$ which is impossible.  We now prove $x \not \sim
\phi(x)$, which will complete the proof in the case of a periodic point.

The proof is by contradiction, so assume that $x \sim \phi(x)$.  According to Lemma~\ref{L:hopping sim}, there are two cases to consider depending on whether or not the roles of $x$ and $\phi(x)$ must be interchanged.
Thus, we either have
a rectangle $R'$ for which
\[ (x,R') \leftrightarrow (\phi(x),\phi(R')) \]
or else there exists a sequence of rectangles $R_{i_0}, \dots, R_{i_{n-2}}$ so that
\[ (\phi^j(\phi(x)),R_{i_j}) \leftrightarrow (\phi^{j+1}(\phi(x)),\phi(R_{i_j}))\]
for $j = 0,\dots,n-2$.  In particular, in the second case we have $(\phi(x),R_{i_0}) \leftrightarrow (\phi^2(x),\phi(R_{i_0}))$.

The first case is clearly impossible since $R'$ is adjacent to $R$ which is mixed by $\phi$, and hence $R'
\stackrel{Y}{\not \sim} \phi(R')$.  In particular, it follows that $(x,R') \not\leftrightarrow (\phi(x),\phi(R'))$,
which is a contradiction.  In the second case, we have $R_{i_0} \stackrel{Y}{\sim} \phi(R_{i_0})$ which implies
$\phi^{-1}(R_{i_0}) \stackrel{N}{\sim} R_{i_0}$. Since $\phi^{-1}(R_{i_0})$ is a rectangle containing $x$, it is
adjacent to (the mixed rectangle) $R$ and therefore $\phi^{-1}(R_{i_0}) \stackrel{N}{\not \sim} R_{i_0}$, another contradiction.

Therefore, it must be the case that $x \not \sim \phi(x)$, as required.

\medskip

We now consider the case where $x$ is not a periodic point for $\phi$. By Lemma~\ref{L:hopping sim}, the equivalence
class of $x$ consists of consecutive $\phi$-iterates.  Let $j$ be a positive integer so that one of the rectangles
containing $\phi^j(x)$, say $R$, is mixed by $\phi$.  As in the periodic case, it follows that $\phi^j(x) \not
\leftrightarrow \phi^{j+1}(x)$.  Since $x$ is aperiodic, it follows from Lemma~\ref{L:hopping sim} that $x \not \sim
\phi^{j+1}(x)$, so $k_+(x) \leq j+1$.

Similarly, note that there is some $j > 0$ so that $\phi^{-j}(x)$ is contained in a rectangle $R$ that is mixed by
$\phi$. So, $\phi^{-j}(x) \not \leftrightarrow \phi^{-j+1}(x)$, and again aperiodicity of $x$ together with
Lemma~\ref{L:hopping sim} implies $\phi^{-j}(x) \not \sim x$, so $k_-(x) \geq -j+1$.

\end{proof}

We now prove that $k_\pm(x)$ depends only on the cell containing $x$ in its interior.

\begin{proposition}[Structure of $\sim$] \label{P:structure of sim}
Let $e$ be a cell of $Y$ that is not a singular point or a marked point.  If $x,y \in \intr(e)$, then $k_\pm(x) =
k_\pm(y)$.  If $x \in \intr(e)$ and $y \in \partial e$, then
\[k_-(y) \leq k_-(x) \leq k_+(x) \leq k_+(y).\]
In particular, we can define $k_\pm(e) = k_\pm(x)$ for any $x \in \intr(e)$ (independently of the choice of $x \in
\intr(e)$), and for every integer $\alpha \in [k_-(e),k_+(e)]$, the map $\phi^\alpha|_e$ is cellular with respect to $Y$.
\end{proposition}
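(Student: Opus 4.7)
My plan is to lift the combinatorial chain provided by Lemma~\ref{L:hopping sim} up to a chain of cells of $Y$, using Proposition~\ref{P:sim1 and Y} to guarantee cellularity at each step. A single induction then establishes simultaneously that $\phi^\alpha|_e$ is cellular with respect to $Y$ and that the same chain realizes the equivalence for every $y\in\bar e$, not just for $y=x$.

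First I would fix $\alpha$ with $0\leq\alpha\leq k_+(x)$ and invoke Lemma~\ref{L:hopping sim} to produce rectangles $R_{i_0},\dots,R_{i_{\alpha-1}}\in\mathcal R$ with $R_{i_{j+1}}=\phi(R_{i_j})$ and $R_{i_j}\stackrel{Y}{\sim}\phi(R_{i_j})$. Since $\phi$ strictly stretches in the horizontal direction and contracts in the vertical direction, no rectangle of $\mathcal R$ is fixed by a nonzero power of $\phi$; hence $R_{i_j}$ and $\phi(R_{i_j})$ are distinct and the $Y$-class of $R_{i_j}$ has more than one element. Proposition~\ref{P:sim1 and Y} then applies and shows that $\phi|_{R_{i_j}}:R_{i_j}\to R_{i_{j+1}}$ is cellular with respect to $Y$. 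The structural point I will use repeatedly is that each rectangle $R\in\mathcal R$ is a union of $Y$-cells, so the condition $x\in R_{i_0}$ forces the (unique) cell $e$ containing $x$ in its interior to satisfy $e\subseteq R_{i_0}$, and hence $\bar e\subseteq R_{i_0}$.

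Setting $e_j:=\phi^j(e)$, I would run a short induction: the base case $j=0$ is immediate, and the inductive step uses cellularity of $\phi|_{R_{i_j}}$ to conclude that $e_{j+1}=\phi(e_j)$ is a cell of $Y$ lying in $R_{i_{j+1}}$, with $\phi^{j+1}(x)\in\intr(e_{j+1})$. This gives that $\phi^\alpha|_e:e\to e_\alpha$ is a cellular homeomorphism, which is the final conclusion of the proposition. The case $k_-(x)\leq\alpha\leq 0$ is handled symmetrically by applying Lemma~\ref{L:hopping sim} with $x'=\phi^\alpha(x)$ (after the interchange of roles allowed by the lemma), running the same induction forward from $x'$, and inverting.

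Once the cells $e_j\subseteq R_{i_j}$ are in hand, the last part follows cheaply. For any $y\in\bar e$ we have $\phi^j(y)\in\bar{e_j}\subseteq R_{i_j}$, so the chain $(\phi^j(y),R_{i_j})\leftrightarrow(\phi^{j+1}(y),R_{i_{j+1}})$ is valid with $y$ in place of $x$. Hence $y\sim\phi^\alpha(y)$ for every $0\leq\alpha\leq k_+(x)$, giving $k_+(y)\geq k_+(x)$, and the backward analogue gives $k_-(y)\leq k_-(x)$. When in addition $y\in\intr(e)$, swapping $x$ and $y$ yields the reverse inequalities and thus equality. I expect the only real subtlety to be bookkeeping around boundary cells---making sure $e\subseteq R_{i_0}$ and that the cellularity assertion of Proposition~\ref{P:sim1 and Y} truly applies to $e$ even when $e$ lies on $\partial R_{i_0}$---and this is resolved by the general CW fact that a cell is determined by any point in its interior, together with the fact that Proposition~\ref{P:sim1 and Y} asserts cellularity on all of the closed rectangle.
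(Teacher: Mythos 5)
Your proposal is correct and takes essentially the same approach as the paper's proof: invoke Lemma~\ref{L:hopping sim} (together with finiteness from Lemma~\ref{L:orbit sim}) to produce a forward $\leftrightarrow$-chain of rectangles from $x$, apply Proposition~\ref{P:sim1 and Y} to get cellularity of $\phi$ on each rectangle in the chain, inductively push the cell $e$ forward through the chain to see that the same chain of relations holds with any $y\in \bar e$ in place of $x$, conclude $k_+(y)\geq k_+(x)$ (and the symmetric statement for $k_-$), then obtain equality for interior points by swapping. One small point you make explicit that the paper leaves implicit is the verification that each $Y$-equivalence class $\{R_{i_j},\phi(R_{i_j})\}$ has more than one element (needed for the hypothesis of Proposition~\ref{P:sim1 and Y}), using that no rectangle is fixed by a nontrivial power of $\phi$; this is a useful clarification but does not change the structure of the argument.
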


\begin{proof}
We suppose first that $x \in \intr(e)$ and $y \in e$ and we prove that $k_+(x) \leq k_+(y)$.  If $k_+(x) = 0$, there is nothing to prove
so suppose $k = k_+(x) > 0$.  Combining Lemmas \ref{L:hopping sim} and \ref{L:orbit sim}, there exists a sequence of
rectangles $R_{i_0}, \dots, R_{i_{k-1}}$, so that
\[ (\phi^j(x),R_{i_j}) \leftrightarrow (\phi^{j+1}(x),\phi(R_{i_j}))\]
for $j = 0,\dots,k-1$.  For each $j$, Proposition \ref{P:sim1 and Y} implies that $\phi|_{R_{i_j}}$ is cellular with respect to $Y$.  Since $e$ is a
cell in $R_{i_0}$, by induction and Proposition \ref{P:sim1 and Y}, we see that $\phi^{j+1}(e)$ is a cell in both $\phi(R_{i_j})$ and $R_{i_{j+1}}$.
Hence $\phi^j(y) \in R_{i_j}$ and $\phi^{j+1}(y) \in \phi(R_{i_j})$, and therefore
\[ (\phi^j(y),R_{i_j}) \leftrightarrow (\phi^{j+1}(y),\phi(R_{i_j}))\]
for every $j = 0,\dots,k-1$.  It follows that $y \sim \phi^j(y)$ for $j = 1,\dots,k$ and thus $k_+(y) \geq k =
k_+(x)$.

Observe that if $y \in \intr(e)$, we can reverse the roles of $x$ and $y$ to obtain $k_+(y) \leq k_+(x)$, and hence
$k_+(x) = k_+(y)$.  Otherwise, $y \in \partial e$, and we only know $k_+(x) \leq k_+(y)$.

The proof that $k_-(x) = k_-(y)$ if $y \in \intr(e)$ and $k_-(y) \leq k_-(x)$ if $x \in \partial e$ follows a similar
argument.

That we can define $k_\pm(e) = k_\pm(x)$ for any $x \in \intr(e)$ now follows.  Finally, the fact that $\phi^\alpha|_e$
is cellular is proven in the course of the proof above.
\end{proof}

\subsection{The quotient of $S$}

Denote $S/\!\!\sim$ by $T$ and let $\pi:S \to T$ be the quotient map.

\begin{proposition} \label{P:W structure}
Let $(\phi:S \to S) \in \Psi_P$ and let $\mathcal R$ be a small Markov partition for $\phi$. There is a cell
structure $W = W(\phi,\mathcal R)$ on $T$, so that $\pi$ is cellular with respect to the cell structure $Y =
Y(\phi,\mathcal R)$ on $S$. Moreover, for each cell $e$ of $Y$, $\pi$ restricts to a homeomorphism from the interior of $e$ onto
the interior of a cell $\pi(e)$ of $W$.  Finally, there is a $D = D(P)$ so that $W$ has $D$--bounded complexity.
\end{proposition}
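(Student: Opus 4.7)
The plan is to define the cells of $W$ to be the $\pi$-images of cells of $Y$, with Proposition~\ref{P:structure of sim} as the main tool. First, for each cell $e$ of $Y$ that is not a marked or singular point, Proposition~\ref{P:structure of sim} supplies well-defined integers $k_\pm(e)$, shows that the $\sim$-class of $\intr(e)$ is exactly $\{\intr(\phi^\alpha(e)) : k_-(e)\le\alpha\le k_+(e)\}$, and says that each $\phi^\alpha|_e$ in this range is cellular with respect to $Y$. I would use this to declare $\pi(e)$ to be a cell of $W$ of the same dimension as $e$, with $\pi$ restricting to a homeomorphism from $\intr(e)$ onto $\intr(\pi(e))$. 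For a marked or singular vertex $p$, which is periodic under $\phi$, the $\sim$-orbit of $p$ is a finite set of marked/singular vertices, and I would take one $0$-cell of $W$ for each such orbit. The attaching map of $\pi(e)$ is then the composition of the attaching map of $e$ in $Y$ with $\pi$; it carries the interior of each lower-dimensional cell of $\partial e$ homeomorphically onto the interior of a lower-dimensional cell of $W$ by the previous sentence. This produces a CW-structure $W$ on $T$, for which $\pi$ is automatically cellular and a homeomorphism on each open cell.

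For the complexity bound, I would argue as follows. By Corollary~\ref{C:relation 1 prop}, the rectangles of $\mathcal R$ fall into at most $E_Y$ $Y$-equivalence classes, and by Proposition~\ref{P:sim1 and Y}, for two $Y$-equivalent rectangles in a class of size greater than one, the map $\phi^{\beta(R,R')}|_R$ identifies their $Y$-subdivisions cell-by-cell. By Lemma~\ref{L:bounded complexity 2}, each rectangle of $\mathcal R$ contains at most $D_2$ $2$-cells of $Y$, and each such $2$-cell has at most $D_2$ cells of lower dimension in its boundary. Consequently each $Y$-equivalence class of rectangles contributes at most $D_2$ $2$-cells and at most $D_2^2$ cells of dimension $0$ or $1$ to $W$, so $W$ has at most $E_Y\cdot D_2^2$ cells in each dimension. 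For the attaching-map complexity, each cell $\pi(e)$ of $W$ has attaching map equal to the attaching map of $e$ in $Y$ followed by $\pi$; the former has $D_2$-bounded complexity by Lemma~\ref{L:bounded complexity 2}, and since $\pi$ is cellular and a homeomorphism on interiors of lower-dimensional cells, the composition still has $D_2$-bounded complexity. Taking $D=\max(E_Y D_2^2,D_2)$ would then yield the last assertion.

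The main obstacle is the first step: checking that $T=S/{\sim}$ actually carries a CW-structure for which $\pi$ is cellular. This requires that $\sim$ never identifies distinct interior points of the same cell of $Y$, never glues an interior point of one cell to a boundary point of another, and always identifies corresponding cells of $Y$ by a $Y$-cellular map. All three are packaged into Proposition~\ref{P:structure of sim}, but applying it carefully---together with handling the case of marked/singular $0$-cells, whose $k_\pm$ can be infinite and whose $\sim$-orbits are finite $\phi$-orbits---is the only delicate piece of bookkeeping. The cell counts and the attaching-map bounds are then mechanical consequences of Lemmas~\ref{L:finite nonhomeo} and~\ref{L:bounded complexity 2}, Proposition~\ref{P:sim1 and Y}, and Corollary~\ref{C:relation 1 prop}.
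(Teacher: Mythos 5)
Your proposal is essentially the paper's proof. Both of you define the cells of $W$ to be the $\sim$-classes of cells of $Y$, invoke Proposition~\ref{P:structure of sim} to show that for each non-marked, non-singular cell $e$ and each $\sim$-equivalent $e'$ there is a unique $\alpha(e,e')$ with $\phi^{\alpha(e,e')}|_e$ cellular with respect to $Y$, and then define $W$ so that $\pi$ is cellular and a homeomorphism on open cells (the paper phrases this as ``an exercise in CW--topology,'' where you spell out more of the verification, including the marked/singular $0$-cells). For the complexity bound you use exactly the same chain: Lemma~\ref{L:bounded complexity 2} to bound the number of $Y$-cells per rectangle and the complexity of each $2$-cell, Proposition~\ref{P:sim1 and Y} for the cellular identifications within $Y$-classes, and Corollary~\ref{C:relation 1 prop} to bound the number of $Y$-equivalence classes of rectangles by $E_Y$, arriving at the bound $D = D_2^2 E_Y$ (your $\max(E_Y D_2^2, D_2)$ is the same constant).
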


\begin{proof}
It follows from Proposition~\ref{P:structure of sim} that $\sim$ defines an equivalence relation on the cells  of
$Y$, which we also call $\sim$,  by declaring
\[ e \sim \phi^\alpha(e) \quad \textnormal{for all} \quad k_-(e) \leq \alpha \leq k_+(e). \]
If $e$ is a cell in $Y$ that is neither a singular point nor a marked point, and $e \sim e'$, then appealing to
Lemma~\ref{L:hopping sim}, Lemma~\ref{L:orbit sim}, and Proposition~\ref{P:structure of sim}, there is a unique integer
$\alpha(e,e') \in [k_-(e),k_+(e)]$ so that $\phi^{\alpha(e,e')}(e) = e'$.  Moreover, by Proposition \ref{P:structure of
sim}, $\phi^{\alpha(e,e')}|_e:e \to e'$ is cellular with respect to $Y$.

An exercise in CW--topology shows that $T$ admits a cell structure $W$ with one cell for each equivalence class of
cells in $Y$.  Moreover, the characteristic maps for these cells can be taken to be the characteristic maps for cells
of $Y$, composed with $\pi$.  Indeed, if $e \sim e'$ in $Y$, and $e$ and $e'$ are $p$--cells with $p \geq 1$, then we may
assume that the characteristic maps $\psi_e$ and $\psi_{e'}$ for $e$ and $e'$, respectively, are related by
\[ \phi^{\alpha(e,e')} \circ \psi_e = \psi_{e'}. \]
Of course, for $0$--cells, the characteristic maps are canonical.  It follows that $\pi$ is cellular and is a
homeomorphism on the interior of any cell.

\medskip

Since $\pi$ is cellular and is a homeomorphism on the interior of each cell, Lemma \ref{L:bounded complexity 2} implies
that each $2$--cell of $W$ has $D$--bounded complexity for any $D \geq D_2 = D_2(P)$.  Recall that the $0$-- and $1$--cells trivially
 have $D$--bounded complexity for all $D \geq 2$.
Lemma \ref{L:bounded complexity 2} also implies that every rectangle $R \in \mathcal R$ is subdivided into at most
$D_2$ $2$--cells of $Y$. By Corollary \ref{C:relation 1 prop} there are at most $E_Y$ $Y$--equivalence classes of
rectangles, and therefore there are at most $D_2E_Y$ $2$--cells of $W$.  Since each $2$--cell has $D_2$--bounded
complexity, there are at most $D_2^2E_Y$ $0$-- and $1$--cells.  Thus, $W$ has $D$--bounded complexity for $D =
D_2^2E_Y$.
\end{proof}


\section{Cell structure for the mapping torus} \label{S:3-mfd}

Let $\phi:S \to S$ be a pseudo-Anosov homeomorphism, and let $M = M_\phi$ denote the mapping torus.  This is the $3$--manifold obtained as
a quotient of $S \times [0,1]$ by identifying $(x,1)$ with $(\phi(x),0)$ for all $x \in S$.  We view $S$ as embedded in
$M$ as follows:
\[S \to S \times \{0 \} \to S \times [0,1] \to M.\]
Let $\{\phi_t \, | \, t \in \mathbb R\}$ denote the suspension flow on $M$; this is the flow on $M$ determined by the
local flow $\widetilde \phi_t(x,s) = (x,s+t)$ on $S \times [0,1]$. The time-one map restricted to $S$ is the first return map,
that is, $\phi_1|_S = \phi$. From this it follows that $\phi_k(x) = \phi^k(x)$ for every integer $k \in \mathbb Z$ and
$x \in S$.

We also have the punctured surface version.  The punctured surface $S^\circ$  embeds in $S$ and is $\phi$--invariant.
Hence, we may view $M^\circ$ as $M_{\phi|_{S^\circ}}$, the mapping torus of $\phi|_{{S^\circ}}:S^\circ \to S^\circ$, embedded in $M =
M_\phi$.

\subsection{Boxes}


Let $(\phi:S \to S) \in \Psi_P$ and let $\mathcal R = \{R_i\}$ be a small Markov partition for $\phi$. For each
rectangle $R_i$, there is an associated \textit{box}, defined by
\[ B_i = \bigcup_{0 \leq t \leq 1} \phi_t(R_i).\]
More precisely, if $\rho_i:[0,1] \times [0,1] \to R_i$ is the parameterized rectangle, then the box is parameterized as
\[\hat \rho_i:([0,1] \times [0,1]) \times [0,1] \to B_i \]
where
$$\hat \rho_i({\bf u},t) = \phi_t(\rho_i({\bf u})).$$
As with rectangles, the map $\hat \rho_i$ is only an embedding when restricted to the interior.  And as in the case of
rectangles, we abuse notation, and refer to $B_i$ as a subset of $M$, although it is formally a map into $M$.
 
\subsection{The cell structure}

We impose on $M$ a cell structure $\hat Y = \hat Y(\phi,\mathcal R)$ defined by the cell structures $X$ and $Y$ on $S$
as follows. The $0$--cells of $\hat Y$ are the $0$--cells of $Y$ (recall that $S \subset M$). The $1$--cells of $\hat Y$ are the $1$--cells of $Y$, called \textit{surface
$1$--cells}, together with suspensions of $0$--cells of $X$:

\[ \bigcup_{0 \leq t \leq 1} \phi_t(v) \mbox{ for } v \in X^{(0)}. \]
Because $X^{(0)} \subset Y^{(0)}$ and since $\phi(X^{(0)}) \subset Y^{(0)}$, the boundary of each of these 1--cells is contained
in $\hat Y^{(0)} = Y^{(0)}$, as required.  We call these the \textit{suspension $1$--cells}.

The $2$--cells of $\hat Y$ are the $2$--cells of $Y$, called \textit{surface $2$--cells}, together with suspensions of
$1$--cells of $X$:
\[ \bigcup_{0 \leq t \leq 1} \phi_t(e) \mbox{ for } e \mbox{ a } 1\mbox{--cell of } X. \]
Observe that for each $1$--cell $e$ of $X$, both $e$ and $\phi(e)$ are $1$--subcomplexes of $Y$.  Furthermore, since
the boundary of $e$ is contained in $X^{(0)}$, we see that the boundary of the $2$--cell defined by the above
suspension is contained in $\hat Y^{(1)}$, as required.  We call these $2$--cells \textit{suspension $2$--cells}.

Finally, the $3$--cells of $\hat Y$ are the boxes.  All $3$--cells can be thought of as a \emph{suspension $3$--cells}, since they are suspensions of surface $2$--cells.

\begin{proposition}  \label{P:hat Y}
There exists a positive integer $K_1 = K_1(P)$ with the following property.  If $(\phi:S \to S) \in \Psi_P$ and if
$\mathcal R$ is a small Markov partition for $\phi$, then each cell of $\hat Y = \hat Y(\phi,\mathcal R)$ has
$K_1$--bounded complexity.
\end{proposition}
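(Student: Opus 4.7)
The plan is to verify $K_1$-bounded complexity separately for each dimension and each type of cell (surface vs.\ suspension) in $\hat Y$, feeding the bounds of Lemma~\ref{L:bounded complexity 1} and Lemma~\ref{L:bounded complexity 2} into the inductive definition of bounded complexity. I would set the value of $K_1$ at the end in terms of the previously obtained constants $D_1 = D_1(P)$ and $D_2 = D_2(P)$. The $0$-cells are automatic. The $1$-cells of $\hat Y$ split into surface $1$-cells, which are $1$-cells of $Y$ and hence have $D_2$-bounded complexity by Lemma~\ref{L:bounded complexity 2}(\ref{part5}), and suspension $1$-cells, which are arcs with two endpoints of $\hat Y^{(0)} = Y^{(0)}$ and therefore have $2$-bounded complexity.

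For the $2$-cells: surface $2$-cells are $2$-cells of $Y$ and so have $D_2$-bounded complexity by Lemma~\ref{L:bounded complexity 2}(\ref{part5}). A suspension $2$-cell is the flow suspension of a $1$-cell $e$ of $X$; topologically it is a disk whose boundary circle decomposes into four arcs, namely $e \times \{0\}$ at the bottom, $\phi(e) \times \{1\}$ at the top, and two suspension $1$-cells on the sides. Lemma~\ref{L:bounded complexity 2}(\ref{part1}) and Lemma~\ref{L:bounded complexity 2}(\ref{part4}) say $e$ and $\phi(e)$ are each unions of at most $D_2$ $1$-cells of $Y$, so $\partial$(suspension $2$-cell) becomes a circular $1$-complex with at most $2D_2 + 2$ one-cells and at most $2D_2 + 2$ zero-cells, each already of bounded complexity. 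This makes suspension $2$-cells $(2D_2+2)$-bounded.

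The heart of the argument, and the step that requires the most care, is the $3$-cells (the boxes $B_i$). The boundary $\partial B_i$ is a $2$-sphere assembled as follows. The bottom face is the rectangle $R_i$, which, viewed with its $Y$-subdivision, is the union of at most $D_2$ surface $2$-cells by Lemma~\ref{L:bounded complexity 2}(\ref{part2}). The top face is $\phi(R_i)$, which is a union of at most $D_2$ surface $2$-cells by Lemma~\ref{L:bounded complexity 2}(\ref{part3})—here it is essential that we built the cell structure $Y$ precisely so that $\phi$-images of $X$-rectangles are subcomplexes of $Y$. The four side walls are the suspensions of the four sides of $R_i$; since $R_i$ has $D_1$-bounded complexity as a $2$-cell of $X$ by Lemma~\ref{L:bounded complexity 1}, $\partial R_i$ is covered by at most $D_1$ one-cells of $X$, and hence the four walls together contribute at most $D_1$ suspension $2$-cells. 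Altogether $\partial B_i$ has at most $2D_2 + D_1$ two-cells, each of complexity at most $2D_2+2$, and the total number of $1$- and $0$-cells of $\partial B_i$ is bounded by the sum of the complexities of these $2$-cells, hence by a constant depending only on $D_1$ and $D_2$.

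Choosing $K_1 = K_1(P)$ to be the maximum of the bounds produced in each of the above cases (for instance one may take $K_1 = (2D_2+D_1)(2D_2+2)$) yields an integer, depending only on $P$, for which every cell of $\hat Y$ has $K_1$-bounded complexity. The only real subtlety is the one noted above: the top face of a box is not naturally a single $2$-cell of $X$, so the decomposition into surface $2$-cells must go through $Y$ rather than $X$; this is precisely what Lemma~\ref{L:bounded complexity 2}(\ref{part3}) was designed to handle.
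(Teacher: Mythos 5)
Your proof is correct and follows essentially the same decomposition as the paper: handle surface cells via Lemma~\ref{L:bounded complexity 2}(\ref{part5}), handle suspension $2$--cells via parts~(\ref{part1}) and~(\ref{part4}), and treat the $3$--cell boundary by splitting it into the bottom ($R_i$, at most $D_2$ surface $2$--cells by part~(\ref{part2})), the top ($\phi(R_i)$, at most $D_2$ surface $2$--cells by part~(\ref{part3})), and at most $D_1$ suspension side walls coming from the $1$--cells of $X$ in $\partial R_i$. The only place you diverge is the very last counting step: to bound the number of $0$-- and $1$--cells on $\partial B_i$ the paper counts $0$--cells directly from the top and bottom faces and then gets the $1$--cell count from $\chi(S^2)=2$, yielding $K_1 = D_1 + 4D_2^2$, whereas you simply sum the per-$2$--cell complexities over the $\le 2D_2 + D_1$ boundary $2$--cells, getting $K_1 = (2D_2 + D_1)(2D_2 + 2)$; both bounds depend only on $P$ and both are valid for the definition of $K_1$--bounded complexity, so this is a harmless variation.
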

\begin{proof}

All $0$--cells and $1$--cells trivially have $K_1$--bounded complexity for all $K_1\geq 2$.

Since $S$ is embedded in $M$ as the subcomplex $Y \subset \hat Y$, it follows from part \ref{part5} of Lemma
\ref{L:bounded complexity 2} that the surface $2$--cells of $\hat Y$ have $K_1$--bounded complexity for any $K_1 \geq
D_2 = D_2(P)$.

If $e$ is a suspension $2$--cell obtained by suspending a $1$--cell $e_0$ of $X$, then the boundary of $e$ consists of
$2$ suspension $1$--cells, and by parts \ref{part1} and \ref{part4} of Lemma \ref{L:bounded complexity 2}, at most
$2D_2$ surface $1$--cells.  Therefore, these $2$--cells have $K_1$--bounded complexity for any $K_1 \geq 2D_2 + 2$.

The boundary of each $3$--cell is a union 6 rectangles: a ``bottom'' and a ``top'' (which are rectangles of $\mathcal R$ and $\phi(\mathcal R)$, respectively), and four ``suspension sides''.
The number of $0$--cells is just the number of $0$--cells in the top and bottom rectangles (since all vertices lie in
$S$).  Each of the top and bottom rectangles is a union of at most $D_2$ surface $2$--cells by parts \ref{part2} and \ref{part3} of
Lemma \ref{L:bounded complexity 2}.  Each surface $2$--cell has $D_2$--bounded complexity, so has at most $D_2$
vertices in its boundary.  It follows that each $3$--cell has at most $2D_2^2$ vertices in its boundary.

The number of suspension $2$--cells in the boundary of a $3$--cell $B_i$ is the number of $1$--cells of $X$ in the
boundary of $R_i \in \mathcal R$.  Since the cells of $X$ have $D_1 = D_1(P)$--bounded complexity by Lemma
\ref{L:bounded complexity 1}, and the $2$--cells of $X$ are precisely the rectangles of $\mathcal R$, it follows that
the boundary of a $3$--cell has at most $D_1$ suspension $2$--cells in its boundary.  Combining this with the bounds
from the previous paragraph on the number of surface $2$--cells, it follows that the boundary of a $3$--cell has at
most $D_1+2D_2$ $2$--cells in its boundary. Finally, since the boundary of a $3$--cell is a $2$--sphere, the Euler
characteristic tells us that the number of $1$--cells in the boundary is 2 less than the sum of the numbers of
$0$--cells and $2$--cells, and so is at most $D_1 + 2D_2 + 2D_2^2 \leq D_1 + 4D_2^2$.  It follows that if $K_1 \geq D_1
+ 4D_2^2$, then each $3$--cell has $K_1$--bounded complexity.

Therefore, setting $K_1 = D_1 + 4D_2^2$ completes the proof.
\end{proof}

A subset of the suspension $1$--cells are the \textit{singular} and \textit{marked $1$--cells}; these are the
$1$--cells of the form
\[ \bigcup_{0 \leq t \leq 1} \phi_t(v) \]
for $v$ either a singular point or marked point, respectively.  These $1$--cells, together with their vertices, form
$1$--dimensional subcomplexes called the \textit{singular} and \textit{marked subcomplexes}. These subcomplexes are
unions of circles in $M$, and $M^\circ$ is obtained from $M$ by removing them.


\section{Quotient spaces I: bounded complexity 3--complexes} \label{S:quotient I}

In this section we use the notion of $Y$--equivalence to produce a quotient of the compact $3$--manifold $M = M_\phi$;
the quotient we obtain will be compact, but might not be a manifold. We will prove that this quotient admits a
$3$--dimensional cell structure with uniformly bounded complexity for which the quotient map is cellular. The singular
and marked subcomplexes of $\hat Y$ will define subcomplexes of the quotient, and in the next section we will prove
that the space obtained by removing these subcomplexes from the quotient is homeomorphic to $M^\circ$, the corresponding mapping torus of
the punctured surface $S^\circ$. First, we define and analyze the quotients.

Let $(\phi:S \to S) \in \Psi_P$ be a pseudo-Anosov homeomorphism and let $\mathcal R = \{R_i\}_{i=1}^n$ be a small
Markov partition for $\phi$, indexed so that each $Y$--equivalence class has the form $\{R_i,\dots,R_{i+k}\}$ with
$R_{i+j} = \phi^j(R_i)$; see Section \ref{S:equiv rel}.  Let $\phi_t$ be the suspension flow on $M$. As in
Section~\ref{S:3-mfd}, we obtain collection of boxes $\{B_i\}_{i=1}^n$.  Let $\hat Y$ be the cell structure on $M$ as
described in Section \ref{S:3-mfd}.

\subsection{Prisms}

To each $Y$--equivalence class of rectangles, we will associate a \textit{prism} as follows.  If
$\{R_i,\dots,R_{i+k}\}$ is a $Y$--equivalence class, then the associated prism is
\[P_i = \bigcup_{0 \leq t \leq k} \phi_t(R_i). \]
The rectangles $R_i$ and $R_{i+k}$ are the \textit{bottom} and \textit{top} of the prism, respectively. Alternatively,
if $\{R_i,R_{i+1},\dots,R_{i+k}\}$ is a $Y$--equivalence class with more than one element, then the
associated prism is
\[ P_i = B_i \cup B_{i+1} \cup \cdots \cup B_{i+k-1}. \]
On the other hand if $\{R_i\}$ is a $Y$--equivalence class consisting of the single rectangle $R_i$, then $P_i = R_i
\subset S \subset M$. If a box is contained in a prism, then we will say it is a \textit{filled box}. See Figure
\ref{F:prism}.

Let $L$ be the union of the prisms in $M$. For clarification, we note that $L$ is the union of $S$ together with the
set of filled boxes, and is a subcomplex of $\hat Y$.

\begin{figure}[htb]
\centerline{}\centerline{}
\begin{center}
\ \psfig{file=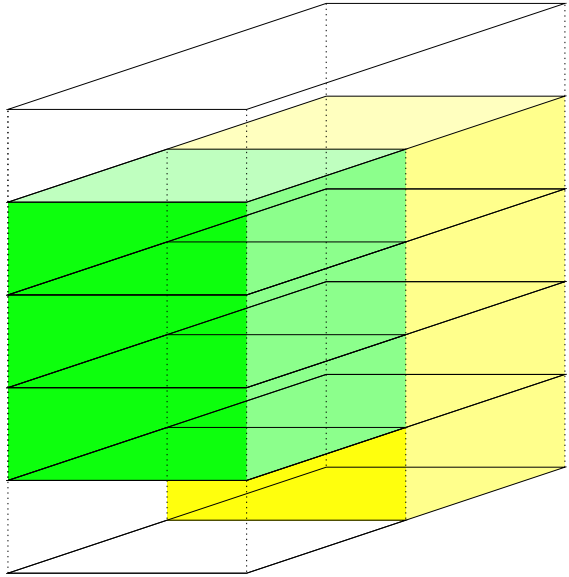,height=3truein} \caption{The figure shows two prisms (green and yellow) in $M_\phi$.  The
prisms stop near the top of the picture as the rectangles at the tops of the prisms are mixed: they are both mapped by $\phi$ into the topmost rectangle in the picture.  At the bottom of the picture, both
rectangles are unmixed, but only one of the boxes (the yellow one) is filled.  This means that the rectangle directly
below the green prism is not $Y$--equivalent to its image by $\phi$, the bottom of the green prism (although it is $h$--equivalent).}
\label{F:prism}
\end{center}
\end{figure}

The prisms define an equivalence relation $\approx$ on $M$ that is a ``continuous version'' of $\sim$ on $S$. More
precisely, we declare $y \approx z$ if and only if either $y = z$ or else there exists an interval $I = [0,r]$ or $I =
[r,0]$ in $\mathbb R$ so that $\phi_r(z) = y$ and
\[ \bigcup_{t \in I} \phi_t(z) \subset L.\]
That is, $y \approx z$ if there is an arc of a flow line containing $y$ and $z$ that is contained in $L$.

\bigskip

We will be particularly interested in the flow lines in $M$ through points  in $S$.  We denote the flow line
in $M$ through $x \in S$ by
\[ \ell_x = \bigcup_{t \in \mathbb R} \phi_t(x).\]

\begin{proposition} \label{P:prism relation}
The equivalence relation $\approx$ restricted to $S$ is precisely the equivalence relation $\sim$.
\end{proposition}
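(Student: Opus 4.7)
The plan is to prove both inclusions separately, exploiting the fact that the suspension flow hits $S$ only at integer times, so any flow arc in $M$ between two points of $S$ decomposes into unit-time segments $\sigma_j$ from $\phi^j(x)$ to $\phi^{j+1}(x)$, each of which sits inside a single box $B_i$ with $\phi^j(x) \in R_i$. The key correspondence is then: $B_i$ is filled (i.e.\ part of a prism) exactly when $R_i \stackrel{Y}{\sim} \phi(R_i)$, which is exactly the condition that produces $(\phi^j(x),R_i) \leftrightarrow (\phi^{j+1}(x),\phi(R_i))$.

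For the forward direction, suppose $x,y \in S$ with $x \sim y$. Applying Lemma \ref{L:hopping sim}, after possibly interchanging $x$ and $y$ I may write $y = \phi^k(x)$ with a sequence of rectangles $R_{i_0},\ldots,R_{i_{k-1}}$ realizing $(\phi^j(x),R_{i_j}) \leftrightarrow (\phi^{j+1}(x),\phi(R_{i_j}))$. Each relation $R_{i_j} \stackrel{Y}{\sim} \phi(R_{i_j})$ places both rectangles in a single $Y$--equivalence class of size at least two, so by the definition of prisms the box $B_{i_j}$ is filled. The flow segment $\sigma_j \subset B_{i_j} \subset L$; concatenating gives the flow arc from $x$ to $y = \phi^k(x)$ lying in $L$, hence $x \approx y$.

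For the reverse direction, suppose $x \approx y$ with $x \neq y$, and let $r \in \mathbb R$ satisfy $\phi_r(x) = y$ and $\bigcup_{0 \le t \le r}\phi_t(x) \subset L$ (WLOG $r > 0$). Since $S$ is a cross-section for $\phi_t$ with first return map $\phi$, the condition $\phi_r(x) \in S$ forces $r = k \in \mathbb Z_{>0}$, and $y = \phi^k(x)$. For each $j$ the sub-arc $\sigma_j$ lies in $L$, and its interior is transverse to $S$ and thus contained in the interior of some box or on the shared vertical face of several; in either case at least one box $B_{i_j}$ containing $\sigma_j$ must be filled, and its base rectangle $R_{i_j}$ contains $\phi^j(x)$ and satisfies $R_{i_j} \stackrel{Y}{\sim} \phi(R_{i_j})$. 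Since $Y$--equivalence forces $R_{i_j}$ to be unmixed with $\phi(R_{i_j})$ as its unique target, we have $\beta(R_{i_j},\phi(R_{i_j})) = 1$, so $(\phi^j(x),R_{i_j}) \leftrightarrow (\phi^{j+1}(x),\phi(R_{i_j}))$. Taking the transitive closure gives $x \sim y$.

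The main point requiring care is in the reverse direction, namely extracting the witnessing rectangle $R_{i_j}$ when $\phi^j(x)$ lies on the boundary of several rectangles so that $\sigma_j$ lies on the shared vertical face of several boxes. One resolves this by noting that $L$ is a $3$--dimensional subcomplex of $\hat Y$, so a segment $\sigma_j$ whose interior is transverse to $S$ and lies in $L$ must be contained in some filled box (possibly non-uniquely); any such choice provides a valid $R_{i_j}$, and different valid choices produce chains of $\leftrightarrow$ with the same transitive closure. The rest of the argument is formal bookkeeping using Lemma \ref{L:hopping sim} and the definitions of $Y$--equivalence, prisms, and filled boxes.
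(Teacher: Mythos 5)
Your proof is correct and follows the same strategy as the paper: the forward direction applies Lemma \ref{L:hopping sim} and notes that each $\leftrightarrow$-step is witnessed by a filled box containing the corresponding unit-time flow segment, while the reverse direction decomposes the flow arc into unit-time segments $\sigma_j$ and extracts a filled box (hence a $Y$-equivalence $R_{i_j}\stackrel{Y}{\sim}\phi(R_{i_j})$) for each. Your explicit treatment of the case where $\sigma_j$ lies on a shared face of several boxes fills in a small step the paper leaves implicit, but it does not change the underlying argument.
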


\begin{proof}

According to Lemma \ref{L:hopping sim}, if $x \sim x'$, then after reversing the roles of $x$ and $x'$, we have
rectangles $R_{i_0},\dots,R_{i_{k-1}} \in \mathcal R$ so that
\[ x \leftrightarrow \phi(x) \leftrightarrow \phi^2(x) \leftrightarrow \cdots \leftrightarrow \phi^{k-1}(x) \leftrightarrow \phi^k(x) = x'\]
with $(\phi^j(x),R_{i_j}) \leftrightarrow (\phi^{j+1}(x),\phi(R_{i_j}))$ for $j = 0,\dots,k-1$.  Therefore,
\[ \phi^j(x), \phi^{j+1}(x) \in \bigcup_{0 \leq t \leq 1} \phi_t(\phi^j(x)) = \ell_{\phi^j(x)} \cap B_{i_j}.\]
By the definition of the $\leftrightarrow$ relation, we have that $R_{i_j} \stackrel{Y}{\sim} \phi(R_{i_j})$, and so
$B_{i_j}$ is a filled box.  Thus $\phi^j(x) \approx \phi^{j+1}(x)$ for each $j = 0,\dots,k-1$.  By transitivity, $x
\approx x'$.

For the other direction, suppose $x \approx x'$.   We have that $x' \in \ell_x$ and the arc $\ell_x^0 \subset \ell_x$
from $x$ to $x'$ is contained in $L$.  Say that $x'=\phi^j(x)$ for $j \geq 0$ (reverse the roles of $x$ and $x'$ if
necessary).  We can write $\ell_x^0$ as
\[ \ell_x^0 = \bigcup_{0 \leq t \leq j} \phi_t(x) =  \bigcup_{i=0}^{j-1} \bigcup_{0 \leq t \leq 1} \phi_t(\phi^i(x)). \]
Since each arc
\[ \bigcup_{0 \leq t \leq 1} \phi_t(\phi^i(x)) \]
lies in $L$, it is contained in a filled box.  It follows that $\phi^i(x) \leftrightarrow \phi_1(\phi^i(x)) =
\phi^{i+1}(x)$ (recall that a $Y$--equivalence between rectangles induces $\leftrightarrow$--relations between all
pairs of corresponding points). Therefore
\[ x \leftrightarrow \phi(x) \leftrightarrow \phi^2(x) \leftrightarrow \, \cdots \, \leftrightarrow \phi^j(x) = x'. \]
Since $\sim$ is the transitive closure of $\leftrightarrow$, it follows that $x \sim x'$.
\end{proof}

\subsection{Flowing out of $L$}

The next proposition provides a bound to how long a flow line can stay in $L$.

\begin{proposition} \label{P:arc fibers}
Let $(\phi:S \to S) \in \Psi_P$ and let $\mathcal R$ be a small Markov partition for $\phi$.  If $x \in S$ is not a
singular point or a marked point, then $\ell_x \cap L$ is a union of compact arcs and points in
$\ell_x$. Moreover, there exists an integer $Q = Q(\phi,\mathcal R)$ so that each component of $\ell_x \cap L$ has
length strictly less than $Q$ (with respect to the flow parameter).
\end{proposition}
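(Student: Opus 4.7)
The plan is to decompose $\ell_x \cap L$ according to the intersections of $\ell_x$ with $S$ and then match the connected components with maximal chains of $\leftrightarrow$--relations along the orbit of $x$. Recall that $\ell_x$ meets $S$ precisely in the orbit $\{\phi^j(x)\}_{j\in\mathbb Z}$, and between $\phi^j(x)$ and $\phi^{j+1}(x)$ it traverses a flow arc $\alpha_j(x)=\bigcup_{0\leq t\leq 1}\phi_t(\phi^j(x))$ of length one. My first step is to show that $\alpha_j(x)\subset L$ if and only if $\phi^j(x)\leftrightarrow\phi^{j+1}(x)$. The forward direction holds because $\alpha_j(x)$ must then lie in some filled box $B_i$ (with $R_i\ni\phi^j(x)$ and $R_i\stackrel{Y}{\sim}\phi(R_i)$); the reverse is exactly the argument used in the proof of Proposition \ref{P:prism relation}.

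With this equivalence in hand, and since every $\phi^j(x)$ already lies in $S\subset L$, the intersection $\ell_x\cap L$ is the union of the discrete set $\{\phi^j(x)\}_{j\in\mathbb Z}$ with those closed arcs $\alpha_j(x)$ for which the $\leftrightarrow$ relation holds. A connected component of this union is therefore either an isolated point $\phi^j(x)$ (when neither $\alpha_{j-1}(x)$ nor $\alpha_j(x)$ is in $L$) or a compact arc $\bigcup_{a\leq t\leq b}\phi_t(x)$ for integers $a<b$, where the intermediate $\leftrightarrow$ relations all hold and the two bookend ones fail. This gives the first conclusion of the proposition.

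For the uniform bound, a component of the second type yields, by transitivity, $\phi^a(x)\sim\phi^b(x)$, so $b-a\leq k_+(\phi^a(x))-k_-(\phi^a(x))$. Since $x$ is neither singular nor marked, no $\phi^j(x)$ is either, and Lemma \ref{L:orbit sim} guarantees that $k_\pm$ is finite along the whole orbit. Proposition \ref{P:structure of sim} lets me replace these quantities by cell-dependent values $k_\pm(e)$, where $e$ is the cell of $Y$ containing the point in its interior, and I can therefore set
\[ Q = 1 + \max_{e}\bigl(k_+(e)-k_-(e)\bigr), \]
the maximum taken over cells $e$ of $Y$ that are not singular or marked $0$--cells. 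Since $Y$ has only finitely many cells, this $Q$ is a finite integer depending on $(\phi,\mathcal R)$, and each component of $\ell_x\cap L$ has length strictly less than $Q$.

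The only subtle step is the biconditional ``$\alpha_j(x)\subset L\Leftrightarrow\phi^j(x)\leftrightarrow\phi^{j+1}(x)$'' when $\phi^j(x)$ lies on the shared boundary of several rectangles, so that $\alpha_j(x)$ lies in the intersection of several boxes; one must check that ``$\alpha_j(x)\subset L$'' is witnessed by at least one of the associated $Y$--equivalences. This is exactly the care already taken in the proof of Proposition \ref{P:prism relation}, so no new work is required.
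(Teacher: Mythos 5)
Your proof is correct and follows essentially the same route as the paper's: both rely on Lemma \ref{L:orbit sim} to bound $k_\pm$, Proposition \ref{P:structure of sim} to make that bound cell-dependent, and Proposition \ref{P:prism relation} to translate between arcs in $L$ and $\leftrightarrow$--chains, yielding the identical constant $Q = 1 + \max_e(k_+(e) - k_-(e))$. The only difference is presentational: you decompose $\ell_x \cap L$ explicitly into the unit-length suspension arcs $\alpha_j(x)$ and prove the biconditional ``$\alpha_j(x)\subset L \Leftrightarrow \phi^j(x)\leftrightarrow\phi^{j+1}(x)$'' (correctly noting the subtlety at shared rectangle boundaries, where the argument in Proposition \ref{P:prism relation} already supplies the needed witness), whereas the paper instead passes directly to the $\approx$--class of $x$ via Proposition \ref{P:prism relation} and treats each component as ``the component through some $x$.'' Your version spells out more carefully why $\ell_x\cap L$ is a disjoint union of compact arcs and isolated points, which the paper states somewhat tersely.
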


\begin{proof}

Fix $x \in S$ to be a nonsingular, unmarked point.  Since the set of such points is invariant under $\phi$, we have that $\ell_x
\cap S = \{\phi^j(x)\}_{j=-\infty}^\infty$ contains no singular or marked points.

We need only consider the component $\ell_x^0 \subset \ell_x \cap L$ containing $x$ (since every component has this
form for some $x$).  Recall that  Lemma \ref{L:orbit sim} implies that the $\sim$--equivalence class of $x$ is precisely
\[\{ \phi^{k_-(x)}(x), \dots, \phi^{k_+(x)}(x) \} \]
for integers $k_-(x) \leq 0 \leq k_+(x)$.  By Proposition~\ref{P:prism relation}, it follows
that $\ell_x^0$ is equal to the compact arc
\[ \bigcup_{k_-(x) \leq t \leq k_+(x)} \phi_t(x) = \ell_x^0 \]
of length $k_+(x) - k_-(x)$ (if $k_-(x) = 0 = k_+(x)$ the arc is a point).

On the other hand, Proposition~\ref{P:structure of sim} implies $k_\pm(x) = k_\pm(e)$, where $e$ is the unique cell containing $x$ in its interior.
Therefore, the length of the longest arc of intersection of a flow line not passing through a marked point or singular point is
\[ Q' = \max \left\{ k_+(e) - k_-(e) \, | \, e \mbox{ is an unmarked, nonsingular cell of }Y \right\}, \]
which is finite since there are only finitely many cells in $Y$.  Setting $Q=Q'+1$ completes the proof.
\end{proof}

\subsection{Product structures}

In the next proposition, we identify the box $B_i$ with $R_i \times [0,1]$ via the suspension flow as described in
Section \ref{S:3-mfd}.

Given a space $V$, a subspace $U \subset V$ and a cell structure $Z$ on $V$, if $U$ happens to be a subcomplex with
respect to the cell structure $Z$, then we will refer to the induced cell structure on $U$ as the
\textit{restriction} of $Z$ to $U$, and write it as $Z|U$.

\begin{proposition} \label{P:product cells}
For each filled box $B_i$, the restriction $\hat Y|{B_i}$ agrees with the product cell structure on $R_i \times [0,1]$
coming from $Y|{R_i}$ and the cell structure on $[0,1]$ that has a single $1$--cell.
\end{proposition}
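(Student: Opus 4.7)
The plan is to identify each cell of $\hat Y|_{B_i}$ with the corresponding cell of the product structure on $R_i \times [0,1]$ via the flow parametrization $\hat\rho_i$, dimension by dimension.

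The key preliminary step is to invoke Proposition~\ref{P:sim1 and Y}. Because $B_i$ is filled, $R_i \stackrel{Y}{\sim} \phi(R_i) = R_{i+1}$ and this $Y$--equivalence class has more than one element. The proposition then yields that $X$ and $Y$ induce the same cell structure on both $R_i$ and $R_{i+1}$, and that $\phi|_{R_i}:R_i \to R_{i+1}$ is a cellular isomorphism. The crucial consequences are: each of $R_i, R_{i+1}$ is a single $2$--cell of $Y$; the $0$-- and $1$--skeleta of $Y|_{R_i}$ coincide with those of $X|_{R_i}$; and likewise for $R_{i+1}$.

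Granting this, the matching is essentially bookkeeping. The surface $k$--cells of $\hat Y$ contained in $R_i$ are in bijection with $(Y|_{R_i})^{(k)} \times \{0\}$, and those contained in $R_{i+1}$ with $(Y|_{R_i})^{(k)} \times \{1\}$ via the cellular isomorphism $\phi|_{R_i}$. The ``vertical'' product cells $(Y|_{R_i})^{(0)} \times [0,1]$ and $(Y|_{R_i})^{(1)} \times [0,1]$ match, respectively, the suspension $1$--cells and suspension $2$--cells of $\hat Y$ contained in $B_i$; here it is essential that $X|_{R_i} = Y|_{R_i}$, because the suspension cells of $\hat Y$ are indexed by cells of $X$ whereas the product cells are indexed by cells of $Y$. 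Finally, the sole product $3$--cell $R_i \times [0,1]$ is the unique $3$--cell $B_i$ of $\hat Y$. In every dimension the attaching maps agree, since $\hat\rho_i$ is affine in preferred coordinates and the cell decomposition of $\partial B_i$ is determined by the $Y$--structure on $R_i \cup R_{i+1}$ together with the suspensions of $\partial R_i$.

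The main potential obstacle is precisely this agreement of the vertical pieces, and the filled--box hypothesis---through Proposition~\ref{P:sim1 and Y}---is exactly what delivers it. For a nonfilled box, $Y|_{R_i}$ can strictly refine $X|_{R_i}$, in which case the product structure would have extra cells with no counterpart in $\hat Y$, so that the filled--box condition is not a cosmetic assumption but is what the statement requires.
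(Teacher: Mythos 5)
Your proof is correct and follows essentially the same route as the paper's: invoke Proposition~\ref{P:sim1 and Y} to conclude $X|_{R_i} = Y|_{R_i}$, $X|_{R_{i+1}} = Y|_{R_{i+1}}$, and cellularity of $\phi|_{R_i}$, then match suspension cells to vertical product cells and surface cells to the top and bottom of the product. The closing remark about why the filled-box hypothesis is not cosmetic is a useful gloss not spelled out in the paper's own proof.
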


\begin{proof}

In a product cell structure, the cells all have the form $e_0 \times e_1$, where $e_0$ and $e_1$ are cells of the first
and second factors, respectively.  Thus, the cells of the product $Y|R_i \times [0,1]$ are of the form (1)
$e_0 \times [0,1]$, (2) $e_0 \times \{0\}$, and (3) $e_0 \times \{1\}$.   So the proposition is saying that for each
cell $e$ of $\hat Y$ in a filled box $B_i$, there is a cell $e_0$ from $Y|{R_i}$ so that one of the following holds:
\[ (1) \, \, \,  e = \displaystyle \bigcup_{0 \leq t \leq 1} \phi_t(e_0), \quad \quad (2) \, \, \, e = e_0, \quad \quad \mbox{ or } \quad \quad (3) \, \, \, e = \phi_1(e_0).\]

So suppose
\[ B_i = \bigcup_{0 \leq t \leq 1} \phi_t(R_i) \]
is a filled box.  Then $R_i \stackrel{Y}{\sim} R_{i+1} = \phi(R_i)$, and, according to Proposition~\ref{P:sim1 and Y},
we have that $\phi|_{R_i}:R_i \to  R_{i+1}$ is cellular with respect to $Y|{R_i} = X|{R_i}$ and $Y|{R_{i+1}} =
X|{R_{i+1}}$.

Since the suspension cells are precisely the suspensions of cells of $X$, which in $R_i$ are exactly the cells of $Y$,
it follows that all suspension cells are of the form (1) above.  All other cells are surface cells in $R_i$ and
$R_{i+1}$. The cells in $R_i$ are of course of the form (2).  Since, again, $\phi|_{R_i}$ is cellular with respect to $Y$, every cell of $Y|{R_{i+1}}$ is of the form $\phi_1(e_0)$ for some cell $e_0$ of $Y|{R_i}$, and thus has the form (3).
\end{proof}

\subsection{Cell structure on the quotient}

Let $N$ denote $M/\!\!\approx$ and let $p:M \to N$ be the quotient map.  According to Proposition~\ref{P:prism
relation}, the restriction to $S$ of the relation $\approx$ is precisely the relation $\sim$ and so the inclusion of $S \to M$ descends to an
inclusion $T \to N$, and we have the following commutative diagram:
\[
\xymatrix{ S \ar[r]^\pi\ar[d] & T \ar[d] \\
M \ar[r]^p & N\\}
\]

\begin{proposition} \label{P:hat W bounded}
The quotient $N$ admits a cell structure $\hat W = \hat W(\phi,\mathcal R)$ so that $p$ is cellular with respect to
$\hat Y$ on $M$. Moreover, there exists $K = K(P)$ so that $\hat W$ has $K$--bounded complexity.
\end{proposition}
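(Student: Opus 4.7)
My plan is to construct the cell structure $\hat W$ on $N$ by pushing forward the cell structure $\hat Y$ via $p$, and then to bound its complexity using the counts from earlier sections together with Proposition~\ref{P:hat Y}.

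First, to identify the cells of $\hat W$, I would combine Proposition~\ref{P:product cells} with Proposition~\ref{P:sim1 and Y}: the filled boxes comprising any prism $P_i$ carry compatible product cell structures that assemble into a single product structure $P_i\cong R_i\times[0,k]$, and the $\approx$-collapse sends $P_i$ onto $R_i$ with its $Y$-cell structure, which then descends to the $W$-cell structure on the image in $T$ via Proposition~\ref{P:W structure}. Consequently, the cells of $\hat W$ are: the cells of $W$ on $T\subset N$; the suspension $1$- and $2$-cells of $\hat Y$ whose flow arcs are not contained in any filled box; and one $3$-cell per unfilled box. A CW-topology verification analogous to that in Proposition~\ref{P:W structure} will then show that these data form a cell structure on $N$ for which $p$ is cellular.

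Next I would count the cells. Because each $Y$-equivalence class of rectangles contributes exactly one unfilled box (the top box of the associated prism, or the lone box over a singleton class), Corollary~\ref{C:relation 1 prop} gives at most $E_Y=E_Y(P)$ unfilled boxes. Proposition~\ref{P:hat Y} shows that each unfilled box has $K_1=K_1(P)$-bounded complexity, and therefore contributes at most $K_1$ candidate suspension cells along its boundary. Combined with the at most $D=D(P)$ cells of $W$ on $T$ (Proposition~\ref{P:W structure}), the total number of cells of $\hat W$ in any fixed dimension is at most $D+K_1 E_Y$.

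The main obstacle will be verifying the $K$-bounded complexity of the $3$-cells $p(B_i)$, since a suspension $2$-cell on $\partial B_i$ that is shared with an adjacent filled box is collapsed by $p$ to a $1$-cell in $T$, and this prevents the $\hat Y$-cell structure on $\partial B_i$ from directly satisfying the interior-homeomorphism requirement of Section~\ref{S:finiteness}. To resolve this, I would use the observation that each such collapse is a coordinate projection inherited from the product cell structure on the neighboring filled box (Proposition~\ref{P:product cells}). Refining the $\hat Y$-cell structure on $\partial B_i$ along these product directions then produces a cell structure on $\partial\mathbb D^3$ with at most a bounded (in terms of $K_1$ and $D_2=D_2(P)$) number of additional cells in each dimension, for which the attaching map is cellular and a homeomorphism on the interior of each cell. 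Setting $K$ to be the maximum of the bounds so obtained will yield $K$-bounded complexity for $\hat W$, completing the proof.
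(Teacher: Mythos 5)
Your overall strategy tracks the paper's quite closely: you decompose $N$ as $T$ together with the images of the unfilled boxes, observe that the prisms project to $T$ via the product structure, count one $3$--cell per unfilled box (hence at most $E_Y$ of them by Corollary~\ref{C:relation 1 prop}), feed in the $K_1$--bounded complexity of $\hat Y$ and the $D$--bounded complexity of $W$, and arrive at the same constant $K = D + E_Y K_1$. That part is all correct and is the same approach the paper takes.

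The problem is the final step, where you handle the attaching maps of the $3$--cells $p(B_i)$. You correctly identify the obstacle: if $B_i$ is unfilled but a suspension $2$--cell $e \subset \partial B_i$ lies in $L$ (shared with an adjacent filled box), then $p$ maps $e$ onto a $1$--dimensional set in $T$, so the $\hat Y$--cell structure on $\partial B_i$ does not give an attaching map that is a homeomorphism on interiors. But your proposed fix --- refining the cell structure on $\partial B_i$ along product directions --- cannot work. The map $p$ restricted to $e \cong e_0 \times [0,1]$ is a projection onto a $1$--complex; it is not injective on any open subset of $e$, so no amount of subdividing $e$ into smaller $2$--cells will produce pieces on which $p$ is a homeomorphism. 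Adding cells goes in exactly the wrong direction. What is needed is the opposite operation: one must \emph{collapse} the suspension $1$-- and $2$--cells of $\partial B_i$ that are contained in $L$, in the flow direction furnished by Proposition~\ref{P:product cells}, and only then project by $p$. This is what the paper does; the collapse turns $e$ into the union of $1$--cells $\pi(e_0)$, produces a new cell structure on $\partial B_i \cong S^2$ on which $p$ is injective on interiors, and --- crucially for your bound --- has \emph{no more} cells than the original, so the $K_1$--bounded complexity is preserved rather than needing to be enlarged. With "refine" replaced by "collapse" (and the observation that collapsing does not increase cell counts), the rest of your argument goes through.
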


\begin{proof}

We view $T$ as a subset of $N$.  First, recall that $L$ is the union of the prisms and that $S$ is contained in $L$, and observe that all of $L$ is mapped (surjectively)
to $T$.  Indeed, on any filled box $B_i$ the map to $N$ is obtained by first projecting onto $R_i$ using its product
structure coming from the flow, then projecting $R_i$ to $T$ by $\pi$.   By Proposition \ref{P:product
cells}, this is a cellular map from $\hat Y|L$ to $W$, the cell structure on $T$ from Proposition~\ref{P:W structure}.

Let $B_{i_1},\dots,B_{i_k}$ denote the set of unfilled boxes.  The $3$--manifold $M$, with cell structure $\hat Y$, is the union of two subcomplexes:
\[L \qquad \textnormal{and} \qquad L^c = S \cup B_{i_1} \cup \cdots \cup B_{i_k}. \]
Note that $L$ and $L^c$ are not complementary, as both contain $S$.  Since both $S$ and $L$ map onto $T$, it follows that $L^c$ maps onto $N$:
\[ N = p(L^c) = T \cup p(B_{i_1}) \cup \cdots \cup p(B_{i_k}). \]

We now construct a cell structure $\hat W$ on $N$ for which $p|_{L^c}:L^c \to N$ is cellular with respect to $\hat
Y|{L^c}$ and for which $T$ is a subcomplex with $\hat W|T = W$.  If we can do this, then since $p|_L:L \to N$ is
already cellular, and $M$ is the union of the two subcomplexes, it will follow that $p$ is cellular.

The above description of $N$ as the image of $S$ and the unfilled boxes indicates the way to build the cell structure $\hat W$ on $N$.  Namely, we start by giving $T$ the cell structure $W$ and then describe the remaining cells as images of certain cells in $L^c$.

First suppose that $e$ is a suspension $1$--cell that is not contained in $L$.  Then since any suspension $1$--cell is contained in the intersection of some set of boxes, it must be that the $1$--cell meets $L$ only in its endpoints.  In particular, $p$ is injective when restricted to the interior of $e$, and the endpoints map into $W^{(0)}$.  We take $p(e)$ as a $1$--cell in $\hat W$.

In addition to the $2$--cells of $W$, we also construct a new $2$--cell $e'$ for each suspension $2$--cell $e$ that is
not contained in $L$, as follows.  The suspension $2$--cells are contained in intersections of boxes, and so since $e$
is not contained in $L$, it meets $L$ only in its boundary.  Therefore, $p$ is injective on the interior of $e$.
Moreover, the boundary contains two suspension $1$--cells and two arcs that are unions of surface $1$--cells of $\hat
Y$.   The map $p$ sends all the surface $1$--cells of $\partial e$ cellularly into $T$, and is injective on the
interior of each cell.  Each of the two suspension $1$--cells is either sent to a $1$--cell constructed in the previous
section (hence injectively on the interior), or is collapsed to a $0$--cell.  The cell $e'$ is essentially $p(e)$, the
only difference being the cell structure on $\partial e'$ has collapsed those suspension $1$--cells in $\partial e$
that are collapsed by $p$ in $N$.

Finally, the $3$--cells come from the unfilled boxes $B_{i_1},\dots,B_{i_k}$.  Each unfilled box has a graph on its
boundary coming from the cell structure $\hat Y$.  Moreover, by Proposition \ref{P:hat Y}, this graph has at most $K_1
= K_1(P)$ vertices, edges, and complementary regions, all of which are disks.  These disks are suspension and surface
$2$--cells in $\hat Y|{L^c}$.  The $3$--cells in $\hat W$ are obtained by first collapsing any suspension $1$--cells
and $2$--cells in the boundary of the box (using the product structure from the flow) that are contained in
$L$, then projecting by $p$ to $N$.  This collapsing produces a new graph in the boundary of the $3$--cell, but the new
graph has no more edges, vertices or complementary regions than the original one.  Moreover, all complementary
components are still disks; see Figure \ref{F:3cellcollapse} for a picture of the $3$--cell and the new cell structure
on the boundary. With this new cell structure on the boundary, the restriction of $p$ to the interior of each cell is
injective.  We also observe that each of these $3$--cells has $K_1$--bounded complexity.

\begin{figure}[htb]
\begin{center}
\ \psfig{file=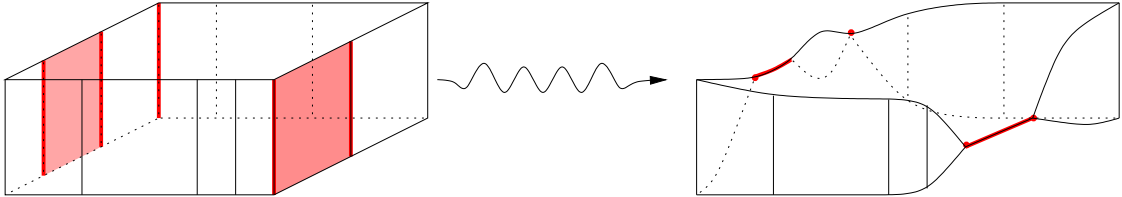,height=1.05truein} \caption{On the left is a box in $M$ with the suspension cells
contained in $L$ shown in red.  On the right is a picture of the $3$--cell after collapsing the suspension cells of the
boundary contained in $L$.} \label{F:3cellcollapse}
\end{center}
\end{figure}

It is straightforward to check that this is the desired cell structure; $p$ is cellular on $L^c$,
and by construction $T$ is a subcomplex with $\hat W|T = W$.  Therefore, $p$ is cellular on all of $N = L \cup L^c$ as
discussed above, and we have proven the first part of the proposition.

\medskip

Now we find a $K = K(P) > 0$ so that $\hat W$ has $K$--bounded complexity.  First, the number of $3$--cells is precisely the
number of unfilled boxes.  Each unfilled box is given by
\[ \bigcup_{0 \leq t \leq 1} \phi_t(R) \]
where $R$ is a terminal rectangle in its $Y$--equivalence class.  Therefore, the unfilled boxes correspond
precisely to the $Y$--equivalence classes of rectangles.  By Corollary \ref{C:relation 1 prop}, there
are at most $E_Y = E_Y(P)$ unfilled boxes, and so at most $E_Y$ $3$--cells.  As mentioned above, each of these
$3$--cells has $K_1$--bounded complexity.

The cell structure $W$ has $D = D(P)$--bounded complexity by Proposition \ref{P:W structure}, so there are at most $D$ $1$--cells in $W$.  Each $1$--cell of $\hat W$
not in $W$ is contained in the boundary of some $3$--cell.  Since these have $K_1$--bounded complexity, and since there are at most $E_Y$ of these, it follows that there are at most $E_YK_1$ $1$--cells in $\hat W$ that are not in $W$.  Therefore, there are at most $(D+E_YK_1)$ $1$--cells in $\hat W$.

Similarly, the number of $2$--cells is bounded by the number of $2$--cells in $W$ plus the number of $2$--cells in each
of the unfilled boxes.  A count as in the previous paragraph implies that there are at most $D + E_YK_1$ $2$--cells in $\hat W$.  Finally,
the $2$--cells of $W$ have $D$--bounded complexity by Proposition \ref{P:W structure}, and each of the suspension
$2$--cells has $K_1$--bounded complexity since it is in the boundary of a $3$--cell, which has $K_1$--bounded complexity.

Therefore, setting
\[ K = K(P) = D + E_Y K_1,\]
it follows that $\hat W$ has $K$--bounded complexity.
\end{proof}

Let
\[ \overline{\T(\Psi_P)} = \{ \hat W(\phi,\mathcal R) \,  | \, \phi \in \Psi_P \mbox{ and } \mathcal R \mbox{ a small Markov partition for }\phi \}/\textnormal{CW--homeomorphism}.\]
The following is immediate from Propositions \ref{P:bounded complexity finite} and \ref{P:hat W bounded}.

\begin{corollary} \label{C:compact finiteness}
The set $\overline{\T(\Psi_P)}$ is finite.
\end{corollary}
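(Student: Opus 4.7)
The plan is to observe that Corollary~\ref{C:compact finiteness} follows essentially by direct citation of the two propositions referenced immediately before its statement; the corollary is a combinatorial bookkeeping step that repackages them.

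First, I would fix the constant $K = K(P)$ produced by Proposition~\ref{P:hat W bounded}. That proposition guarantees that for every pair $(\phi,\mathcal R)$ with $\phi \in \Psi_P$ and $\mathcal R$ a small Markov partition for $\phi$, the quotient complex $\hat W(\phi,\mathcal R)$ is a CW--complex of dimension at most $3$ with $K$--bounded complexity, where $K$ depends only on $P$ and not on the choice of $\phi$ or $\mathcal R$. This is the crucial uniformity: both the total number of cells in each dimension and the complexity of each attaching map are controlled by a single constant $K(P)$.

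Next, I would invoke Proposition~\ref{P:bounded complexity finite} with the parameters $n = 3$ and $K = K(P)$, which asserts that the set of CW--homeomorphism types of $3$--complexes with $K$--bounded complexity is finite. Since $\overline{\T(\Psi_P)}$ is by definition the image in the set of CW--homeomorphism types of $3$--complexes of the map $(\phi,\mathcal R) \mapsto \hat W(\phi,\mathcal R)$, and since every element of this image has $K(P)$--bounded complexity by the previous step, we conclude that $\overline{\T(\Psi_P)}$ is contained in a finite set and is therefore itself finite.

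There is essentially no obstacle here: all of the substantive work has already been done in assembling the cell structure $\hat W$ and in proving the abstract finiteness statement for bounded-complexity complexes. The only thing to check is that the definition of $\overline{\T(\Psi_P)}$ (as a quotient by CW--homeomorphism) matches the equivalence relation used in Proposition~\ref{P:bounded complexity finite}, which it does by construction.
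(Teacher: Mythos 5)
Your proof is correct and is precisely the paper's argument: the paper states that the corollary is immediate from Propositions~\ref{P:bounded complexity finite} and~\ref{P:hat W bounded}, which is the same two-step citation you spell out.
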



\section{Quotient spaces II: Finitely many 3--manifolds} \label{S:quotient II}
We continue with the notation from the previous section: $M_{\phi|_{S^\circ}} = M^\circ \subset M = M_\phi$, $L$ is the
union of $S$ and the prisms, and $p:M \to N$ is the quotient defined by collapsing arcs of flow lines in $L$ to points.
Let $N^\circ$ denote $p(M^\circ)$, the complement in $N$ of the singular and marked subcomplexes.  We also write
$p:M^\circ \to N^\circ$ for the restriction. The main goal of this section is to prove the following.

\begin{theorem} \label{T:quotient htpc homeo}
The map $p:M^\circ \to N^\circ$ is homotopic to a homeomorphism.
\end{theorem}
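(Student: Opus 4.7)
The plan is to show that $p: M^\circ \to N^\circ$ is a proper cell-like surjection between $3$-manifolds and then invoke Armentrout's theorem on cell-like maps of $3$-manifolds to conclude that $p$ is homotopic to a homeomorphism. The first ingredient is the fiber analysis: by Proposition \ref{P:prism relation} together with Proposition \ref{P:arc fibers}, every fiber $p^{-1}([x])$ with $[x] \in N^\circ$ is either a single point (when $[x]$ comes from the interior of an unfilled box) or a compact arc of a flow line contained in some prism $P_i$. Because the singular and marked flow lines have been removed in passing to $M^\circ$, each fiber arc lies strictly between the two unfilled boxes immediately above and below $P_i$, and is therefore compact. In either case the fiber is compact and contractible, so $p$ is cell-like; Proposition \ref{P:structure of sim} shows that the decomposition of $M^\circ$ into these fibers is upper semicontinuous, from which properness of $p$ follows.

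Next I would verify that $N^\circ$ is a $3$-manifold via a local flow-box analysis. On the image of the interior of an unfilled box, $p$ is a local homeomorphism. At a point $[x]$ whose fiber $\alpha = p^{-1}([x])$ is a compact arc in a prism $P_i \cong R_i \times [0,k]$, the suspension flow provides a tubular neighborhood of $\alpha$ in $M^\circ$ of the form $D^2 \times [-\epsilon, k+\epsilon]$, with $\alpha = \{0\} \times [0,k]$ and the outer intervals $[-\epsilon, 0]$ and $[k, k+\epsilon]$ extending into the unfilled boxes immediately adjacent to $P_i$. The restriction of $p$ to this neighborhood collapses each transversal $\{y\} \times [0,k]$ to a single point and is a homeomorphism on the complement; the two parts $p(D^2 \times [-\epsilon, 0])$ and $p(D^2 \times [k, k+\epsilon])$ then glue along a common rectangle (the top and bottom faces of $P_i$ map to the same copy of $R_i$ in $T$, since $R_i$ and $\phi^k(R_i)$ are $Y$--equivalent), producing an image in $N^\circ$ homeomorphic to $D^2 \times [-\epsilon, \epsilon]$. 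Hence $N^\circ$ is locally Euclidean at every point, and is a $3$-manifold.

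With cell-likeness, properness, and the $3$-manifold property of $N^\circ$ established, Armentrout's theorem --- a proper cell-like surjection between $3$-manifolds is a near-homeomorphism --- yields that $p$ is a uniform limit of homeomorphisms and in particular is homotopic to one. For a more hands-on alternative, one can verify the Bing shrinking criterion directly: for each $\epsilon > 0$, build a self-homeomorphism $h_\epsilon$ of $M^\circ$ by reparameterizing the suspension flow inside each prism so that $R_i \times [0,k]$ is compressed to thickness less than $\epsilon$, with the adjacent unfilled boxes absorbing the excess flow parameter. After applying $h_\epsilon$, every fiber of $p$ has diameter less than $\epsilon$, yielding a sequence of homeomorphisms converging uniformly to $p$.

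The main obstacle will be arranging these flow reparameterizations globally consistently --- in particular at the faces where prisms of different heights meet adjacent unfilled boxes, and along rectangles of $S$ shared between distinct prisms. The bounded-complexity estimates of Section \ref{S:quotient I}, especially Proposition \ref{P:hat W bounded}, ensure that only finitely many local configurations can arise, so the required compatibilities can be arranged by a finite case analysis using the cellular structure $\hat Y$ from Section \ref{S:3-mfd}.
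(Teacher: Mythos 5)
Your approach is genuinely different from the paper's: you want to recognize $p$ as a proper cell-like map between $3$-manifolds and invoke Armentrout/Siebenmann (or Bing shrinking), whereas the paper passes to the infinite cyclic cover $\widetilde M^\circ\cong S^\circ\times\mathbb R$, constructs a smooth map $f(x,t)=(x,\int_0^t\widetilde\nu_x)$ with $\widetilde\nu=(h\circ q)\,dt$ and $h$ vanishing exactly on $L^\circ$, shows that $f$ has the same fibers as the collapse and that its image $V$ is an open submanifold of $S^\circ\times\R$ diffeomorphic to $S^\circ\times\R$, and then applies Waldhausen's theorem after identifying $N^\circ$ as $V/\langle\hat\delta\rangle$. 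Both routes are reasonable in outline, and your route has the aesthetic advantage of not needing the $\pi_1$-isomorphism computation (it comes for free from approximation by homeomorphisms). You also correctly pinpoint the role of Lemma~\ref{L:orbit sim} in making the fibers compact once the singular and marked orbits are removed.

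But there is a genuine gap in the step where you claim $N^\circ$ is locally Euclidean. Your flow-box analysis is only valid when the fiber arc $\alpha$ lies in the \emph{interior} of a prism, so that nearby fibers have the same endpoints and the collapse is a clean product $D^2\times[0,k]\to D^2$. A fiber arc can lie on the $1$- or $2$-skeleton of $\hat Y$, along a wall shared by prisms of different heights or by a prism and an unfilled box, and there it is not even true that $\alpha$ is ``contained in some prism $P_i$'': it may thread through several. Along such a wall the fiber-length functions $k_\pm$ are only semicontinuous (Proposition~\ref{P:structure of sim}: $k_-(y)\leq k_-(x)\leq k_+(x)\leq k_+(y)$ for $y\in\partial e$, $x\in\intr(e)$), so the nearby fibers jump in length discontinuously and the decomposition of a tubular neighborhood is not a product collapse. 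Recognizing the quotient of such a neighborhood as $\R^3$ is exactly the nontrivial point; the paper handles it implicitly, via the \emph{smooth} bump function $h$, which turns the semicontinuous jump in fiber lengths into continuous boundary functions for the open region $V\subset S^\circ\times\R$. Nothing in your sketch plays that smoothing role. The same difficulty is what makes the Bing-shrinking alternative hard to carry out: you flag ``arranging these flow reparameterizations globally consistently'' as the main obstacle, but do not actually resolve it, and the finite-case-analysis appeal to Proposition~\ref{P:hat W bounded} does not by itself produce compatible shrinking homeomorphisms across walls where prism heights disagree. So while the decomposition-theory strategy is plausible, as written the argument does not establish that $N^\circ$ is a manifold, which is precisely the hypothesis Armentrout's theorem requires.
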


Recall that a 3--manifold is \emph{irreducible} if every 2--sphere in the 3--manifold bounds a 3--ball.  We will deduce Theorem~\ref{T:quotient htpc homeo} from
Waldhausen's Theorem (the version we will use can be found in \cite[Corollary 13.7]{He}):

\begin{theorem}[Waldhausen]
\label{T:wald}
Suppose $M_1$ and $M_2$ are compact, orientable irreducible 3--manifolds with nonempty boundary.  If $f:(M_1,\partial M_1) \to (M_2,\partial M_2)$ is a map such that $f_*:\pi_1(M_1) \to \pi_1(M_2)$ is an isomorphism and $f_*:\pi_1(\partial_0 M_1) \to \pi_1(\partial_0 M_2)$ is an injection for each component $\partial_0 M_1 \subset \partial M_1$ and the component $\partial_0 M_2 \subset \partial M_2$ containing $f(\partial_0 M_1)$, then $f$ is homotopic to a homeomorphism.
\end{theorem}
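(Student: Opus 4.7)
The plan is to apply Waldhausen's Theorem (Theorem \ref{T:wald}). Since that theorem is stated for compact 3-manifolds with boundary, I would first pass to the compact 3-manifolds $\bar M^\circ$ and $\bar N^\circ$ obtained by removing small open regular neighborhoods of the singular/marked closed trajectories in $M$ and of their image 1-subcomplexes in $N$. The map $p$ restricts to a map $\bar p : \bar M^\circ \to \bar N^\circ$, and it suffices to show $\bar p$ is homotopic to a homeomorphism. The three main tasks are to show (a) $\bar N^\circ$ is a compact, orientable, irreducible 3-manifold with nonempty boundary, (b) $\bar p_*$ is an isomorphism on $\pi_1$, and (c) $\bar p_*$ is injective on the fundamental group of each boundary component.

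Tasks (b) and (c) I would deduce from the fact that $p$ is a quotient by compact contractible sets. By Proposition \ref{P:arc fibers}, each $\approx$-equivalence class in $M^\circ$ is a point or a compact arc of a flow line of uniformly bounded length, and hence contractible. Since $p$ is cellular with respect to $\hat Y$ and $\hat W$ (Propositions \ref{P:hat Y} and \ref{P:hat W bounded}) and each open cell of $\hat W$ has contractible preimage in $M^\circ$, a standard CW argument shows $p : M^\circ \to N^\circ$ is a homotopy equivalence, yielding (b). The same analysis applied in a deleted regular neighborhood of each closed orbit (where the $\approx$-classes are still compact flow arcs) shows that $\bar p$ restricts to a homotopy equivalence on each torus boundary component, yielding (c).

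Task (a) is the main obstacle: I need to verify that every point of $\bar N^\circ$ has a Euclidean neighborhood (or a half-space neighborhood on the boundary). The interior of each 3-cell of $\hat W$, coming from an unfilled box, is manifestly an open 3-ball. For lower-dimensional cells of $\hat W$ not in the singular or marked subcomplex, I would use the product structure of prisms (Proposition \ref{P:product cells}) to argue that each collapsed prism is sandwiched between unfilled boxes on either side, and that the adjacent 3-cells provide the missing third dimension needed to form a local 3-ball in $N^\circ$. The flow lines through singular and marked points are precisely those that stay in $L$ for all time, so they are collapsed entirely to 1-subcomplexes of $N$ with no adjacent 3-cells providing a thickening; this is exactly why they form the non-manifold locus of $N$ and why their removal restores the manifold structure. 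Irreducibility of $\bar M^\circ$ follows from the hyperbolicity of $M$ via Thurston's Geometrization Theorem for fibered 3-manifolds, and irreducibility of $\bar N^\circ$ then follows from the Sphere Theorem since $\bar N^\circ$ is aspherical (being homotopy equivalent to $\bar M^\circ$); orientability transfers through the homotopy equivalence. With (a), (b), and (c) in hand, Waldhausen's Theorem yields that $\bar p$ is homotopic to a homeomorphism, and hence so is $p$.
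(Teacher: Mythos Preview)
First, a mismatch: the statement you were given is Waldhausen's Theorem itself, which the paper does \emph{not} prove---it is quoted from \cite[Corollary 13.7]{He} as a black box. Your proposal is actually a proof sketch for Theorem~\ref{T:quotient htpc homeo} (that $p:M^\circ\to N^\circ$ is homotopic to a homeomorphism), which \emph{uses} Waldhausen's Theorem. I will compare your sketch to the paper's proof of Theorem~\ref{T:quotient htpc homeo}.

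Your tasks (b) and (c), and the plan to feed them into Waldhausen, match the paper's overall strategy. The genuine divergence, and the gap, is in task (a): showing that $N^\circ$ is a $3$--manifold. You propose a direct local argument (``each collapsed prism is sandwiched between unfilled boxes \ldots\ the adjacent $3$--cells provide the missing third dimension''), but this is exactly the delicate point, and your sketch does not supply it. Prisms abut one another along their vertical sides in combinatorially complicated ways, different prisms have different heights, and it is not at all automatic that collapsing a family of disjoint compact arcs in a $3$--manifold yields a $3$--manifold---one needs something like a local product structure for the decomposition into arcs, which you have not established near points on the sides of prisms or where several prisms meet.

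The paper sidesteps this local analysis entirely. It passes to the infinite cyclic cover $\widetilde M^\circ\cong S^\circ\times\mathbb R$ and builds a smooth map $f:S^\circ\times\mathbb R\to S^\circ\times\mathbb R$, $f(x,t)=(x,\int_0^t\widetilde\nu_x)$, whose fibers coincide with the $\equiv$--classes (Lemma~\ref{L:same fibers}). Proposition~\ref{P:covering V} shows the image $V=f(\widetilde M^\circ)$ is open in $S^\circ\times\mathbb R$ and in fact homeomorphic to $S^\circ\times\mathbb R$; thus $\widetilde N^\circ\cong V$ is a manifold with no local verification needed. The deck action of $\langle\hat\delta\rangle$ then exhibits $N^\circ$ as a quotient of $S^\circ\times\mathbb R$ by a free properly discontinuous action, so $N^\circ$ is a manifold, is irreducible (its universal cover is $\mathbb R^3$), and has $\pi_1$ presented as the correct extension of $\mathbb Z$ by $\pi_1(S^\circ)$. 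Tameness comes from the bounded-complexity cell structure, and then Waldhausen applies to the compact cores. This covering-space maneuver is the key idea your sketch is missing; without it, your task (a) is not justified.
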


Assuming Theorem~\ref{T:quotient htpc homeo}, we can prove Theorem \ref{theorem:main1}, which states that the set
$\T(\Psi_P^\circ)$ is finite.

\begin{proof}[Proof of Theorem \ref{theorem:main1}]

By Corollary \ref{C:compact finiteness}, $\overline{\T(\Psi_P)}$ is a finite set of CW--homeomorphism types of cell complexes.
Therefore, since each $N^\circ$ is obtained by removing a $1$--subcomplex from a compact $3$--complex $N \in \overline{\T(\Psi_P)}$, it follows that the set
\[  \{N^\circ \, | \, N \textnormal{ represents an element of } \overline{\T(\Psi_P)} \} /\, \textnormal{homeomorphism} \]
is finite.  But by Theorem~\ref{T:quotient htpc homeo}, this set is equal to $\T(\Psi_P^\circ)$.
\end{proof}

\medskip

So, we are left to prove Theorem~\ref{T:quotient htpc homeo}.  This will occupy the remainder of the section.

\bigskip

Let $\rho:M^\circ \to S^1$ denote the fibration coming from the mapping torus description of $M^\circ$. Let $\omega \in
\Omega^1(S^1)$ be a volume form so that $\int_{S^1} \omega = 1$, and let $\mu = \rho^*(\omega) \in \Omega^1(M^\circ)$
be the pullback (which on $S^\circ \times [0,1]$ we can take to be $dt$).  This $1$--form $\mu$ represents
the Poincar\'e dual of the class represented by $S^\circ \subset M^\circ$. Integration of $\mu$ defines an epimorphism
$\int\!\mu: \pi_1(M^\circ) \to \mathbb Z$, and we let
\[q:\widetilde M^\circ \to M^\circ\]
denote the infinite cyclic cover of $M^\circ$ determined by $\int\! \mu$.

Alternatively, $q$ is simply the cover corresponding to $\pi_1(S^\circ)$, and so is obtained by unwrapping the bundle.
Therefore, we have
\[ \widetilde M^\circ \cong S^\circ \times \mathbb R.\]
We make this diffeomorphism explicit as follows. First,
choose a lift $S^\circ \subset \widetilde M^\circ$ of the embedding $S^\circ \subset M^\circ$.  The flow $\phi_t$ lifts to a flow
$\widetilde \phi_t$ on $\widetilde M^\circ$.  We now define a map
\[ S^\circ \times \mathbb R \to \widetilde M^\circ \]
by
\[ (x,t) \mapsto \widetilde \phi_t(x).\]
We use this diffeomorphism to identify $\widetilde M^\circ$ with $S^\circ \times \mathbb R$.

\medskip

The covering group for $q:\widetilde M^\circ \to M^\circ$ is infinite cyclic.  Let $\delta$ be the generator that induces a translation by $+1$ in the second factor of $\widetilde M^\circ \cong S^\circ \times \R$.  With respect to the product structure, we have
\[ \delta(x,t) = (\phi^{-1}(x),t+1). \]
Writing the last formula in terms of the flow $\widetilde \phi_t$, we obtain
\[\delta(\widetilde \phi_t(x)) = \delta(x,t) =
(\phi^{-1}(x),t+1) =\widetilde \phi_{t+1}(\phi^{-1}(x)), \] and iterating, we obtain
\begin{equation} \label{eqn:flow equivariance}
\delta^k(\widetilde \phi_t(x)) = \widetilde \phi_{t+k}(\phi^{-k}(x)).
\end{equation}

\bigskip

We will study the quotient $N^\circ$ of $M^\circ$ by looking at the associated quotient of $\widetilde M^\circ$.  Let  $L^\circ =L \cap
M^\circ$, and let $\widetilde L^\circ = q^{-1}(L^\circ)$.  For any $x \in S^\circ$, we consider the flow line
\[ \widetilde \ell_x = \bigcup_{t \in \mathbb R} \widetilde \phi_t(x).\]
Since $\widetilde \ell_x$ is a lift of the flow line $\ell_x$, Proposition
\ref{P:arc fibers} implies that $\widetilde \ell_x \cap \widetilde L^\circ$ is a union of compact arcs (and points) of length less than
$Q = Q(\phi,\mathcal R)$ (measured with respect to the flow parameter). For $y,z \in \widetilde M^\circ$ define $y \equiv z$ if
there exists $x \in S^\circ$ so that $y,z$ are in the same component of $\widetilde \ell_x \cap \widetilde L^\circ$. Set
\[ \widetilde N^\circ = \widetilde M^\circ/\!\equiv \]
and let
\[ \widetilde p: \widetilde M^\circ \to \widetilde N^\circ \]
be the quotient map.

\bigskip

We will now give an alternative description of $\widetilde N^\circ$ as an open subset of $S^\circ \times \mathbb R$; we
will realize it as the image of a smooth map $f:\widetilde M^\circ \to S^\circ \times \mathbb R$.
Let $h \in C^\infty(M^\circ)$ be a smooth function that is positive on $M^\circ - L^\circ$ and is identically zero on
$L^\circ$. Let $\nu = h \mu$, and $\widetilde \nu = q^*(\nu)$. Since $\widetilde \nu$ is a pullback, it is invariant by
$\delta$:
\[\delta^*(\widetilde \nu) = \widetilde \nu. \]
Let $dt$ denote the 1--form on $\widetilde M^\circ \cong S^\circ \times \R$ obtained by pulling back the volume form on $\R$, and
observe that on $S^\circ \times \mathbb R$ we have
\[ \widetilde \nu = (h \circ q)\, dt.\]

Now define $f: S^\circ \times \mathbb R \to S^\circ \times \mathbb R$ by
\[ f(x,t) = \left( x, \int_0^t \widetilde \nu_x \right) \]
where the notation in the integral means the integral of $\widetilde \nu$ over the path $\phi_s(x)$ as $s$ runs from $0$ to $t$.
Since $\widetilde \nu$ is smooth, the map $f$ is smooth.  Set
\[ V = f(\widetilde M^\circ). \]
We remark that $V$ may or may not be all of $S^\circ \times \R$.

\begin{lemma}
\label{L:same fibers}
The fibers of $f$ are precisely the $\equiv$--equivalence classes. That is, $f$ and $\widetilde p$ have the same fibers.
\end{lemma}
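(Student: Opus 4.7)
The plan is to exploit the product structure $\widetilde M^\circ \cong S^\circ \times \mathbb R$ in which the lifted flow lines are precisely the vertical lines $\{x\} \times \mathbb R$, and then translate the statement ``the arc between $y$ and $z$ lies in $\widetilde L^\circ$'' into a statement about vanishing of an integral. Specifically, in the product coordinates we have $\widetilde\nu = (h\circ q)\,dt$ with $h\circ q \geq 0$ and $(h\circ q)^{-1}(0) = \widetilde L^\circ$. The map $f$ preserves the first coordinate and sends $(x,t)$ to $(x,\int_0^t (h\circ q)(x,s)\,ds)$. Thus two points on the same flow line $\{x\}\times\mathbb R$, say $(x,t_1)$ and $(x,t_2)$, have the same $f$-image if and only if $\int_{t_1}^{t_2}(h\circ q)(x,s)\,ds = 0$.

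First I would observe that two points in different flow lines of $\widetilde M^\circ$ can never be $f$-equivalent, because $f$ preserves the first coordinate in the product structure. This matches the $\equiv$ relation, since by definition $y \equiv z$ requires $y,z$ to lie on a common flow line $\widetilde\ell_x$. So both relations refine the partition into flow lines, and it suffices to check they agree on each flow line.

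Next, on a single flow line $\{x\}\times\mathbb R$, I would argue that vanishing of the integral is equivalent to the arc lying in $\widetilde L^\circ$. Since $L$ is a finite union of closed boxes together with $S$, it is closed in $M$, so $L^\circ = L \cap M^\circ$ is closed in $M^\circ$ and $\widetilde L^\circ = q^{-1}(L^\circ)$ is closed in $\widetilde M^\circ$. Consequently, $h\circ q$ is a continuous, nonnegative function whose zero set is exactly $\widetilde L^\circ$. The standard fact that a continuous nonnegative function with vanishing integral over an interval vanishes identically on that interval then gives: $\int_{t_1}^{t_2}(h\circ q)(x,s)\,ds = 0$ if and only if $(h\circ q)(x,s) = 0$ for all $s \in [t_1,t_2]$, i.e.\ if and only if the whole segment $\{\widetilde\phi_s(x)\mid t_1\leq s\leq t_2\}$ lies in $\widetilde L^\circ$. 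That is precisely the condition $(x,t_1)\equiv(x,t_2)$.

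I expect the statement to be essentially routine once the setup is unpacked; the only mild subtlety is verifying that the zero set of $h$ is exactly $L^\circ$ (requiring closedness of $L^\circ$) so that the implication from vanishing of the integral to containment in $\widetilde L^\circ$ is a strict equivalence and not just one direction. Once this closedness is invoked, the argument is a line-integral computation and a continuity argument, and the lemma follows.
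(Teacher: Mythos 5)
Your proof is correct and takes essentially the same approach as the paper: reduce to individual flow lines using that $f$ preserves the first product coordinate, then observe that $h\circ q$ is nonnegative with zero set $\widetilde L^\circ$, so the integral from $t_1$ to $t_2$ vanishes if and only if the entire arc lies in $\widetilde L^\circ$. The paper phrases the second step as ``$f|_{\widetilde\ell_x}$ is constant on components of $\widetilde\ell_x\cap\widetilde L^\circ$ and monotone increasing on complementary intervals,'' which is the same observation; your concern about closedness of $L^\circ$ is not needed for this lemma, since the paper already stipulates that $h$ is positive on $M^\circ - L^\circ$ and zero on $L^\circ$, making the zero set of $h$ exactly $L^\circ$ by construction.
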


\begin{proof}

First observe that if $x \neq x'$, then $f(\widetilde \ell_x) \cap f(\widetilde \ell_{x'}) = \emptyset$ since $f(\widetilde \ell_x)
= f(\{x \} \times \mathbb R) \subset \{x \} \times \mathbb R$.  It follows that  we can only have $y \equiv
z$ if $y$ and $z$ both lie on $\widetilde \ell_x$ for some $x$.  Therefore, it suffices to show that for any $x \in S^\circ$ and for all
$y,z \in \widetilde \ell_x$, we have $f(y) = f(z)$ if and only if $y \equiv z$.

To prove this, we observe that for any flow line $\widetilde \ell_x$, the restriction $\widetilde \nu_x$ of $\widetilde \nu$ to
$\widetilde \ell_x$ is zero precisely on the intersection $\widetilde \ell_x \cap \widetilde L^\circ$.  Therefore, $f|_{\widetilde \ell_x}$
is constant on each component of $\widetilde \ell_x \cap L^\circ$ and monotone (increasing) on the complementary intervals.  It
follows that the fibers of $f|_{\widetilde \ell_x}$ are precisely the $\equiv$--equivalence classes.
\end{proof}

Because $\widetilde \nu$  is invariant by $\delta$, the map $f$ semiconjugates $\delta$ to a map $\hat \delta: V \to V$.  That is,
$\hat \delta \circ f = f \circ \delta$.  An explicit formula for $\hat \delta$ is given by
\[ \hat \delta(x,s) = \left( \phi^{-1}(x), \ s + \int_0^1 \widetilde \nu_{\phi^{-1}(x)} \right).\]

\begin{proposition}
\label{P:covering V}
The image $V=f(S^\circ \times \mathbb R)$ is an open subset of
$S^\circ \times \mathbb R$, and is homeomorphic to $S^\circ \times \mathbb R$. Moreover, $\langle
\hat \delta \rangle$ acts properly discontinuously and freely on $V$, and with respect to this product structure,
$\hat \delta$ is given by $\phi^{-1}$ on the first factor.
\end{proposition}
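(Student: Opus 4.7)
The plan is to understand the map $f(x,t) = (x, g_x(t))$, with $g_x(t) = \int_0^t (h \circ q)(\widetilde{\phi}_s(x))\, ds$, fiber by fiber over $S^\circ$. For each $x \in S^\circ$ the fiber $V_x = g_x(\R)$ is an interval, since $g_x$ is continuous and non-decreasing; by Proposition~\ref{P:arc fibers} applied to $\ell_{q(x)} \subset M^\circ$, the components of $\ell_{q(x)} \cap L^\circ$ are compact of length less than $Q$, so $g_x$ is not eventually constant in either direction, and $V_x = (L_-(x), L_+(x))$ is a non-degenerate open interval containing $0$. Writing $L_+(x) = \sup_t g_x(t)$ exhibits $L_+$ as a supremum of functions continuous in $x$, hence lower semi-continuous, and similarly $L_-$ is upper semi-continuous. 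Therefore
\[ V = \{(x,s) \in S^\circ \times \R : L_-(x) < s < L_+(x)\} \]
is open in $S^\circ \times \R$.

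To arrange $V = S^\circ \times \R$ on the nose, I would choose $h$ so that on each unfilled box $B_i = R_i \times [0,1]$ there is a central slab $R_i \times [\tfrac{1}{4}, \tfrac{3}{4}]$ on which $h \geq c > 0$; such an $h$ exists by a partition-of-unity construction. Along any $\phi$-orbit in $S^\circ$, the sequence of rectangles visited exits its current $Y$-equivalence class within at most $|\mathcal{R}|$ steps, and the terminal rectangle of each $Y$-class sits beneath an unfilled box. Hence in every $|\mathcal{R}|$ consecutive unit time intervals the flow traverses the core of some unfilled box, contributing at least $c/2$ to $g_x$. This gives a uniform linear lower bound on $|g_x(t)|$, so $g_x : \R \to \R$ is surjective and $V = S^\circ \times \R$.

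For the action of $\langle \hat\delta \rangle$, the semiconjugacy $\hat\delta \circ f = f \circ \delta$ together with the identity $g_{\phi^{-1}(x)}(t) = g_x(t-1) - g_x(-1)$ yields, by induction on $k$,
\[ \hat\delta^k(x, s) = (\phi^{-k}(x),\, s - g_x(-k)) \qquad \text{for all } k \in \Z. \]
A fixed point of $\hat\delta^k$ would force both $\phi^{-k}(x) = x$ and $g_x(-k) = 0$; but any $\phi$-periodic orbit in $M^\circ$ must pass through an unfilled box (by the $Y$-class structure), so $\int h$ over one period is strictly positive and no fixed point exists. Proper discontinuity follows from the uniform linear growth of $|g_x(-k)|$: for compact $K \subset V$, the condition $\hat\delta^k(K) \cap K \neq \emptyset$ bounds $|g_x(-k)|$, and hence bounds $|k|$. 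Finally, $\hat\delta(x, s) = (\phi^{-1}(x),\, s + A(x))$ acts as $\phi^{-1}$ on the first factor by inspection.

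The step I expect to require the most care is the choice of $h$ producing uniform linear growth of $|g_x|$ along every flow line in $M^\circ$; the crucial input is Corollary~\ref{C:relation 1 prop} (boundedly many $Y$-equivalence classes) together with the observation that every $Y$-class terminates in an unfilled box, ensuring every flow line regularly visits a region on which $h$ has a positive lower bound. Without this uniformity one would be forced into a more delicate fiberwise reparametrization argument using only the semi-continuity of $L_\pm$, with extra bookkeeping when either endpoint is infinite.
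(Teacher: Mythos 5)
Your proposal diverges from the paper's proof in a way that is instructive but also introduces a genuine gap. You are trying to prove the stronger statement $V = S^\circ \times \R$ on the nose by choosing $h$ carefully, whereas the paper explicitly remarks just before the proposition that ``$V$ may or may not be all of $S^\circ \times \R$'' and then proves only that $V$ is homeomorphic to $S^\circ \times \R$. The paper's mechanism is a fundamental-domain argument: using Proposition~\ref{P:arc fibers} it shows $\int_0^Q \widetilde\nu_x > 0$ for every $x$ (pointwise positivity suffices, with no uniformity), so $\Delta := f(S^\circ \times [0,Q])$ is the region between the zero graph and the graph of a positive continuous function, hence a product $S^\circ \times [0,1]$; then $V = \bigcup_j \hat\delta^{jQ}(\Delta)$, which is simultaneously open, a product, and a proper discontinuity witness. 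All of your three goals (openness, product structure, proper discontinuity) drop out at once, with no need to control the total variation $g_x(\infty)-g_x(-\infty)$.

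The gap in your approach is the uniform linear growth of $g_x$. Your argument that ``in every $|\mathcal R|$ consecutive unit time intervals the flow traverses the core of some unfilled box, contributing at least $c/2$'' fails on two counts. First, $h$ must vanish on $L^\circ$, and the closed slab $R_i \times [\tfrac14,\tfrac34]$ contains boundary points $\partial R_i \times [\tfrac14,\tfrac34]$ that lie in the suspension $2$--cells shared with adjacent boxes; if an adjacent box is filled, those points are in $L^\circ$, so you cannot have $h \geq c > 0$ on the whole slab, only on a compact subset of its interior. Second, and more seriously, the vertical sides of rectangles are forward $\phi$--invariant, so a point $x$ on a vertical boundary has its entire forward flow line confined to the $2$--skeleton of $\hat Y$: that flow line never enters the interior of any box. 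For such $x$ the flow need not pass through any interior compact set on which $h$ is bounded below, so your lower bound on $g_x(|\mathcal R|)$ is not established. Without the uniform bound, both the surjectivity $g_x(\R) = \R$ and the proper-discontinuity step (``bounds $|g_x(-k)|$, hence bounds $|k|$'') are unproven. You flag this yourself as the delicate step, and indeed it is where the argument breaks; the fallback you mention (semi-continuity of $L_\pm$ plus fiberwise bookkeeping) is essentially what the paper's $\Delta$-construction packages cleanly. Your free-action argument (periodic orbits must meet an unfilled box, via Lemma~\ref{L:orbit sim}) is correct, but the paper's one-liner (torsion-freeness of $\Z$ plus proper discontinuity) is simpler once proper discontinuity is in hand.
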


\begin{proof}

Let $Q = Q(\phi,\mathcal R)$ be the constant from Proposition~\ref{P:arc fibers}.  As mentioned above (immediately after the definition of $\widetilde \ell_x$), the integer $Q$ is greater than the length of the component of any flow
line $\widetilde \ell_x^0 \subset \widetilde \ell_x \cap \widetilde L^\circ$.  In particular, it
follows that
\[ \int_0^{Q} \widetilde \nu_x > 0 \]
for all $x \in S$.

It follows that $f(S^\circ \times \{0\})  \cap f(S^\circ \times \{Q\}) = \emptyset$.  On the other hand, we have
\[\hat \delta^{Q}(f(S^\circ \times \{0\})) = f (\delta^{Q}(S^\circ \times \{0 \})) = f (S^\circ \times \{Q\}). \]
Since $f(S^\circ) = S^\circ = S^\circ \times \{0\} \subset V$, this implies
\[ S^\circ \cap \hat \delta^{Q}(S^\circ)= \emptyset \]
in $V$.

The region between $S^\circ$ and $\hat \delta^{Q}(S^\circ)$ in $S^\circ \times \R$ is homeomorphic to a product.  Indeed, it is exactly the region
between the graphs of the zero function on $S^\circ$ and the smooth positive function
\[x \mapsto \int_0^{Q} \widetilde \nu_x.\]
We denote this region by $\Delta$.  Observe that
\[ \Delta = \left\{ \left( x, \int_0^t \widetilde \nu_x \right) \, \Big| \, x \in S^\circ \mbox{ and } 0 \leq t \leq Q
\right\} = f(S^\circ \times [0,Q]) \]
and so $\Delta \subset V$.

The region $\Delta$ is a fundamental domain for the action of $\langle \hat \delta^Q \rangle$ on $V$, and $V$ is equal to
\begin{equation} \label{eqn:Delta translates}
\bigcup_{j=-\infty}^\infty \hat \delta^{jQ}(\Delta)
\end{equation}
with $\langle \hat \delta^Q \rangle$ acting properly discontinuously.
Since $\langle \hat \delta^Q \rangle$ is contained in $\langle \hat \delta \rangle$ with finite index, it follows
that $\langle \hat \delta \rangle$ also acts properly discontinuously. Because $\langle \hat \delta \rangle$ is torsion
free, its action on $V$ is also free.

Any homeomorphism from $\Delta$ to $S^\circ
\times [0,Q]$ that is the identity on the surface factor uniquely extends to a homeomorphism $V \to S^\circ \times \R$
that conjugates the action of $\langle \hat \delta^Q \rangle$ to the action of $\langle \delta^Q \rangle$.  As the homeomorphism is the identity on the first factor, the conjugate of $\hat \delta$ (which is different from $\delta$) restricts to $\phi^{-1}$ on the first factor.

From \eqref{eqn:Delta translates}, we also see that $V$ is an open set: any point is either in the interior of a
$\langle \hat \delta^Q \rangle$--translate of $\Delta$, or the union of two consecutive translates.
\end{proof}

Finally, we prove Theorem~\ref{T:quotient htpc homeo}, which states that the map $p:M^\circ \to N^\circ$ is homotopic to a homeomorphism.

\begin{proof}[Proof of Theorem \ref{T:quotient htpc homeo}]

We are going to make connections between the various spaces and maps we have constructed (and some we have yet to construct).  Throughout, the following diagram will serve as a guide.

\begin{center}
\label{F:big diagram}
\scalebox{1}{
\xymatrix{
& V \ar@{->}[r]^{\!\!\!\!\!\!\!\!\!\cong} \ar@{->}[d]^{\eta}& S^\circ \times \R \\
\widetilde{M}^\circ \ar@{->}[r]^{\widetilde p} \ar@{->}[d]_{q} \ar@{->}[ur]^{f}& \widetilde N^\circ \ar@{->}[dr]^{r}& \\
M^\circ \ar@{->}[r]^{p} & N^\circ \ar@{->}[r]^{\!\!\!\!\!\!\!\!\!\cong} & \widetilde N^\circ/\langle \hat \delta \rangle
}
}
\end{center}

The map $f: \widetilde M^\circ \to V \cong S^\circ \times \mathbb R$ is a proper continuous surjection, and hence is a quotient
map.  Also, by definition, $\widetilde p: \widetilde M^\circ \to \widetilde N^\circ$ is a quotient map.  Since the fibers of these two maps are the same  (Lemma~\ref{L:same fibers}), it follows that the quotients are homeomorphic.  Let $\eta:V \to \widetilde N^\circ$ be the homeomorphism for which $\eta \circ f = \widetilde p$.

We use the homeomorphism $\eta$ and the homeomorphism $V \cong S^\circ \times \mathbb R$ from Proposition~\ref{P:covering V} to identify $\widetilde N^\circ$ with $S^\circ \times \mathbb R$.  Conjugating by
this homeomorphism we obtain an action of $\langle \hat \delta \rangle$ on $\widetilde N^\circ$.  We abuse notation and simply
refer to the conjugate homeomorphism by the same name:
\[ \hat \delta:\widetilde N^\circ \to \widetilde N^\circ. \]

Now, $N^\circ$ is the image of $M^\circ$ via the quotient map $p:M^\circ \to N^\circ$, and since the covering map $q:\widetilde M^\circ \to
M^\circ$ is also a quotient map, the composition $p \circ q:\widetilde M^\circ \to N^\circ$ is a quotient map.  We also have the
covering map
\[r: \widetilde N^\circ \to \widetilde N^\circ/\langle \hat \delta \rangle \]
which is a quotient map, and hence so is $r \circ \widetilde p$.  Since the fibers of $p \circ q$ are the same as those of
$r \circ \widetilde p$, it follows that
\[N^\circ \cong \widetilde N^\circ/\langle \hat \delta \rangle.\]

General covering space theory implies that $\pi_1(N^\circ)$ fits into a short exact sequence:
\[ 1 \to \pi_1(\widetilde N^\circ) \to \pi_1(N^\circ) \to \mathbb Z \to 1. \]
The monodromy $\mathbb Z \to \Out(\pi_1(\widetilde N^\circ))$ is given by $1 \mapsto \hat \delta_*$.  On
the other hand, the isomorphism $\pi_1(\widetilde N^\circ) \to \pi_1(S^\circ)$ induced by projecting onto the first factor,
conjugates $\hat \delta_*$ to $\phi^{-1}_*$ by Proposition~\ref{P:covering V}. Therefore, $\pi_1(N^\circ)$ is an extension of $\mathbb Z$ by $\pi_1(S^\circ)$ with monodromy $\Z \to \Out(\pi_1(S^\circ))$ given by
$1 \mapsto \phi_*$. The fundamental group $\pi_1(M^\circ)$ also has such a description.  Since $\mathbb Z$ is free, each
sequence splits, and it follows that $\pi_1(M^\circ)$ is isomorphic to $\pi_1(N^\circ)$.
Moreover, the map $p:M^\circ \to N^\circ$ induces this isomorphism
\[p_*:\pi_1(M^\circ) \to \pi_1(N^\circ) \]
as can be seen in the lift $\widetilde p:\widetilde M^\circ \to \widetilde N^\circ$.

Since $N^\circ$ is covered by $S^\circ \times \mathbb R$ and $S^\circ$ is a hyperbolic surface, it follows that the universal
cover of $N^\circ$ is $\mathbb R^3$.  Therefore $N^\circ$ is irreducible.  Furthermore, $N^\circ$ is the complement of a
$1$--subcomplex of the cell complex $N$, which has $K$--bounded complexity by Proposition \ref{P:hat W bounded}.  Since $N$ can be subdivided into a simplicial complex (see the proof of Proposition \ref{P:bounded complexity finite}), it follows that $N^\circ$ is tame, that is, it is homeomorphic to the interior of a compact 3--manifold with boundary.

Because $N^\circ$ is tame, we can remove a product neighborhood of the ends to produce a \textit{compact core} $\widehat N^\circ$ for $N^\circ$, which is a compact submanifold for which the inclusion is a homotopy equivalence.  Since $p$ is a proper map, $p^{-1}(\widehat N^\circ)$ is a compact subset of $M^\circ$.  On the other hand, $M^\circ$ is also tame, and so we can also remove product neighborhoods of the ends of $M^\circ$ to arrive at a compact core $\widehat M^\circ$ for $M^\circ$ that contains $p^{-1}(\widehat N^\circ)$.

Using the product structure on the complement of $\widehat M^\circ$ in $M^\circ$ and of $\widehat N^\circ$ in
$N^\circ$, there are strong deformation retractions $h_M:M^\circ \to \widehat M^\circ$ and $h_N:N^\circ \to \widehat
N^\circ$. The map $F = h_N \circ p \circ h_M$ is therefore homotopic to $p$, and moreover the restriction $F|_{\widehat
M^\circ}:\widehat M^\circ \to \widehat N^\circ$, satisfies the hypotheses of Waldhausen's Theorem
(Theorem~\ref{T:wald}): the only thing to verify is that the boundary subgroups are mapped injectively, but that is
clear from the construction of $p$.  Therefore, $F|_{\widehat M^\circ}$ is homotopic to a homeomorphism.  Using this
homotopy and the product neighborhoods of the ends one can construct a homotopy from $F$, and hence from $p$, to a
homeomorphism.
\end{proof}



\small

\noindent
Benson Farb\\
Dept. of Mathematics, University of Chicago\\
5734 University Ave.\\
Chicago, IL 60637\\
farb@math.uchicago.edu
\medskip

\noindent
Christopher J. Leininger\\
Dept. of Mathematics, University of Illinois at Urbana-Champaign \\
273 Altgeld Hall, 1409 W. Green St. \\
Urbana, IL 61802\\
clein@math.uiuc.edu
\medskip

\noindent
Dan Margalit\\
Dept. of Mathematics, Tufts University\\
503 Boston Ave \\
Medford, MA 02155\\
dan.margalit@tufts.edu

\end{document}